\newtheorem{Th}{Theorem}
\newtheorem{Cor}{Corollary}
\newtheorem{Def}{Definition}
\begin{document}
\thispagestyle{empty}

\title[Damped perturbations of systems with centre-saddle bifurcation]
{Damped perturbations of systems with centre-saddle bifurcation}
\author{Oskar A. Sultanov}

\address{
Institute of Mathematics, Ufa Federal Research Centre, Russian Academy of Sciences,
\newline
112, Chernyshevsky street, Ufa, Russia, 450008}
\email{oasultanov@gmail.com}


\maketitle
{\small

{\small
\begin{quote}
\noindent{\bf Abstract.}
An autonomous system of ordinary differential equations in the plane with a centre-saddle bifurcation is considered. The influence of time damped perturbations with power-law asymptotics is investigated. The particular solutions tending at infinity to the fixed points of the limiting system are considered. The stability of these solutions is analyzed when the bifurcation parameter of the unperturbed system takes critical and non-critical values. Conditions that ensure the persistence of the bifurcation in the perturbed system are described. When the bifurcation is broken, a pair of solutions tending to a degenerate fixed point of the limiting system appears in the critical case. It is shown that, depending on the structure and the parameters of the perturbations, one of these solutions can be stable, metastable or unstable, while the other solution is always unstable.
\medskip

\noindent{\bf Keywords: }{asymptotically autonomous system, bifurcation, perturbation, stability, Lyapunov function}

\medskip
\noindent{\bf Mathematics Subject Classification: }{34C23, 34D10, 34D20, 37J65}
\end{quote}
}

\section{Introduction}

In this paper, the effect of time-dependent perturbations on autonomous systems with a centre-saddle bifurcation is investigated. A class of perturbations described by the functions vanishing at infinity in time is considered. We investigate the behaviour of solutions to such asymptotically autonomous systems in the vicinity of a bifurcation point of the corresponding limiting systems. It is well known that in some cases the trajectories of perturbed and unperturbed systems have the same long term behaviour (see, for example,~\cite{LM56,HRT92}). In the general case, the qualitative and asymptotic properties of solutions depend on the limiting system and on the structure of perturbations~\cite{HRT94}. From~\cite{LRS02,KS05,MR08} it follows that damped perturbations can preserve or destroy autonomous bifurcations. See also~\cite{CP10}, where bifurcation phenomena are discussed for more general non-autonomous systems. In this paper, we study the conditions that guarantee the persistence of the bifurcation in the perturbed system, and describe possible asymptotic regimes for solutions at infinity in time when the bifurcation is broken.

The paper is organized as follows. In section~\ref{sec1}, the formulation of the problem is given and the class of perturbations decaying at infinity is described. In section~\ref{sec2}, we construct the asymptotics for particular solutions associated with the fixed points of the corresponding limiting system. The stability of the particular solutions is discussed in section~\ref{sec3}. The instability of some solutions can be justified by linearization. For other solutions, the linear analysis fails and the stability is investigated by constructing appropriate Lyapunov functions. In section~\ref{sec4}, the proposed theory is applied to examples of asymptotically autonomous systems. The paper concludes with a brief discussion of the results obtained.

\section{Problem statement}
\label{sec1}
Consider a non-autonomous system of two differential equations:
\begin{gather}
\label{FulSys}
\frac{dx}{dt}=\partial_y H(x,y;\lambda)+F(x,y,t), \quad \frac{dy}{dt}=-\partial_x H(x,y;\lambda)+G(x,y,t),
\end{gather}
were $H(x,y;\lambda)$, $F(x,y,t)$ and $G(x,y,t)$ are smooth functions defined for all $(x,y)$ in $\mathbb R^2$, $t>0$, and $\lambda\in\mathbb R$.
It is assumed that $G(x,y,t)\to 0$ and $F(x,y,t)\to 0$ as $t\to\infty$ for $(x,y)$ in any compact subset $D\subset \mathbb R^2$, and the limiting Hamiltonian system has a centre-saddle bifurcation such that under variation of the parameter $\lambda$ equilibria of centre and saddle type coalesce and disappear~\cite{HH07}. Without loss of generality, it is assumed that
\begin{gather*}
    H(x,y;\lambda)=\frac{y^2}{2}+V(x;\lambda), \quad \partial_x V(x;\lambda)=(x^2-\lambda)w(x),
\end{gather*}
where $w(x)>0$ for all $(x,\lambda)\in\{|x|<\sqrt{\lambda}+d_0,\lambda\geq 0\}\cup\{|x|<d_0,\lambda< 0\}$ with some $d_0>0$. The paper investigates the influence of $F(x,y,t)$ and $G(x,y,t)$ on a global behaviour of solutions. In particular, we study possible asymptotic regimes in perturbed system \eqref{FulSys} and their stability for various values of the parameter $\lambda$.

It is assumed that the perturbations are described by functions with power-law asymptotics:
\begin{gather}
\label{FG}
F(x,y,t)=\sum_{k=1}^\infty t^{-\frac kq} F_k(x,y), \quad G(x,y,t)=\sum_{k=1}^\infty t^{-\frac kq} G_k(x,y),\quad t\to\infty, \quad q\in \mathbb Z_+.
\end{gather}
Such perturbations appear, for example, in the study of solutions of Painlev\'{e} equations and their disturbances~\cite{IKNF06,BG08}, in the asymptotic analysis of resonance phenomena in nonlinear systems~\cite{LK08,LK14,OS20,OS21}, and in many other problems related to nonlinear non-autonomous systems~\cite{BD79,BAD04,VB07,KF13,ML14}.

Note that the rational powers $k/q$ with $q>1$ in \eqref{FG} can be reduced to the integer exponents $k$ by the change of the independent variable: $\theta=t^{1/q}$. However, in this case, the damped factor $\theta^{-(q-1)}$ appears in the left-hand side of the system, which makes the problem of a long-term behaviour singularly perturbed~\cite{FV05}. In this way a global behaviour of solutions cannot be derived from a corresponding limiting equations that appear if we take $\theta=\infty$. On the other hand, it can easily be checked that system \eqref{FulSys}, written in the variable $\tau=\epsilon t$ with a small parameter $0<\epsilon\ll1$, is singularly perturbed as $\epsilon\to 0$. In some cases the asymptotic solution of such a problem as $\epsilon\to0$ and $0<\tau\leq \mathcal O(1)$ gives a long-term approximation for solutions in the original variable $t=\tau/\epsilon$. However, this approach is usually not used when investigating the behaviour of solutions at infinity~\cite{WW66,FO74,IKNF06,BG08}. Moreover, known asymptotic constructions with a small parameter turn out to be inapplicable in a general case~\cite{LK09}.

Consider a simple example demonstrating possible effects of damped perturbations on a system with a centre-saddle bifurcation:
\begin{gather}
\label{Example}
\frac{d^2x}{dt^2}+x^2-\lambda=t^{-\kappa} \Big(B \frac{dx}{dt}+C\Big), \quad B,C,\lambda \in\mathbb R, \quad \kappa>0.
\end{gather}
This equation in the variables $x,y= \dot x$ takes form \eqref{FulSys} with  $V(x;\lambda)=x^3/3-\lambda x$, $w(x)\equiv 1$, $F(x,y,t)\equiv 0$, and $G(x,y,t)\equiv t^{-\kappa} (B y+C)$. It is easy to see that if $\lambda>0$, the limiting equation has two fixed points: $z_s=(-\sqrt\lambda,0)$ is a saddle and $z_c=(\sqrt\lambda,0)$ is a centre. If $\lambda=0$, the unperturbed equation has a degenerate unstable fixed point $z_0=(0,0)$, which disappears when $\lambda<0$. If $\lambda<0$, all trajectories of the limiting system are unbounded.  Outside the bifurcation value, damped perturbations change the behaviour of solutions in the vicinity of the centre: depending on the perturbation parameters, the trajectories are attracted to $z_c$, are repelled by $z_c$, or remain in some neighbourhood of $z_c$ without attraction to it (see Fig.~\ref{f1}). In this case, the behaviour of solutions near the saddle $z_s$ changes insignificantly.  When $\lambda=0$, the following regimes are possible near the fixed point $z_0$: the trajectories tend to $z_0$ at infinity in time (see~Fig.~\ref{f2}, a); the trajectories oscillate near $z_0$ for a finite time interval and eventually leave its vicinity (see~Fig.~\ref{f2}, b); and the trajectories leave a neighbourhood of $z_0$ without a delay (see~Fig.~\ref{f2}, c).
\begin{figure}
\centering
\subfigure[$B=0.1$ ]{\includegraphics[width=0.3\linewidth]{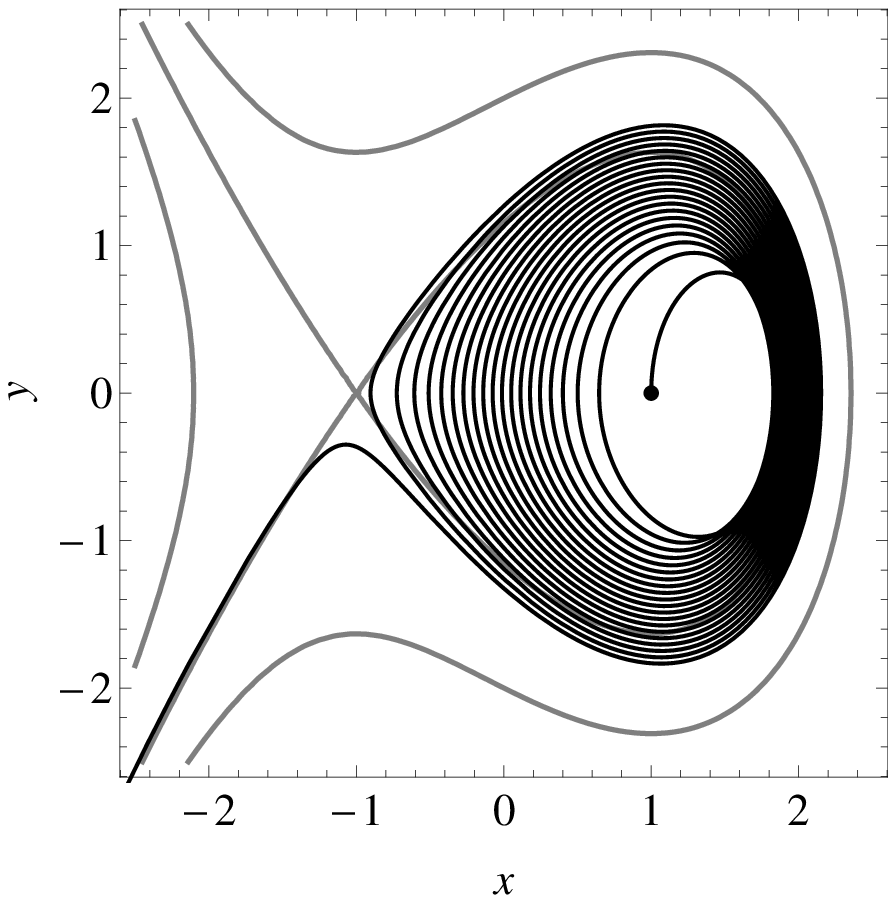}}
\subfigure[$B=0$]{\includegraphics[width=0.3\linewidth]{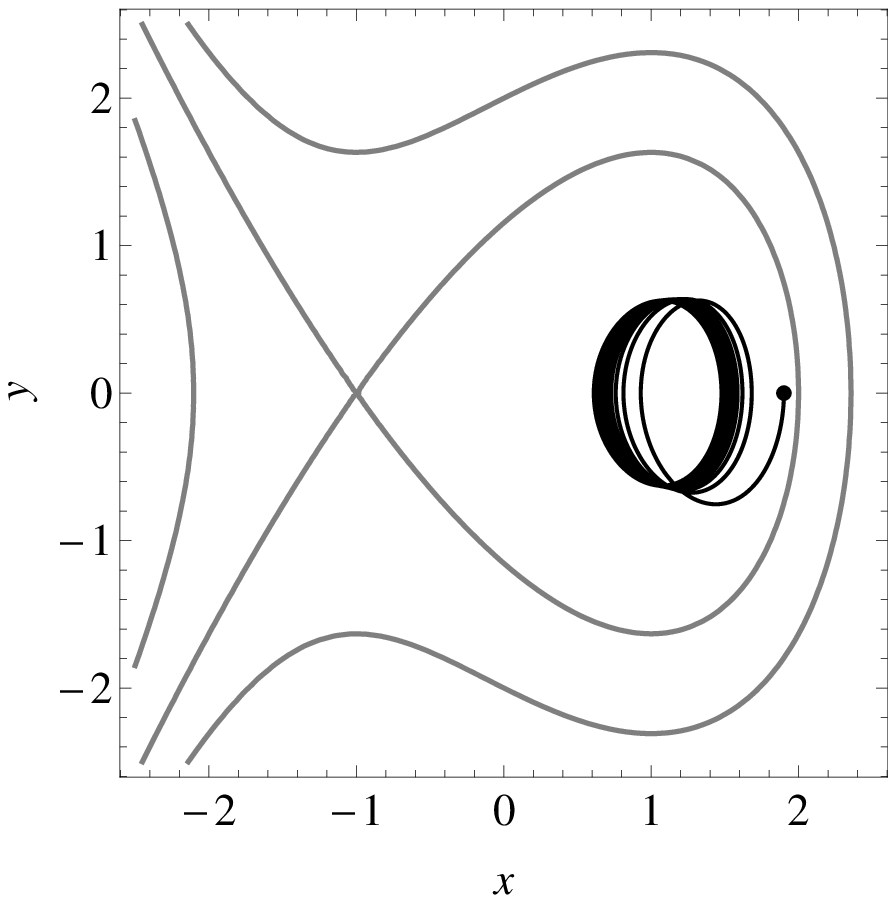}}
\subfigure[$B=-0.5$]{\includegraphics[width=0.3\linewidth]{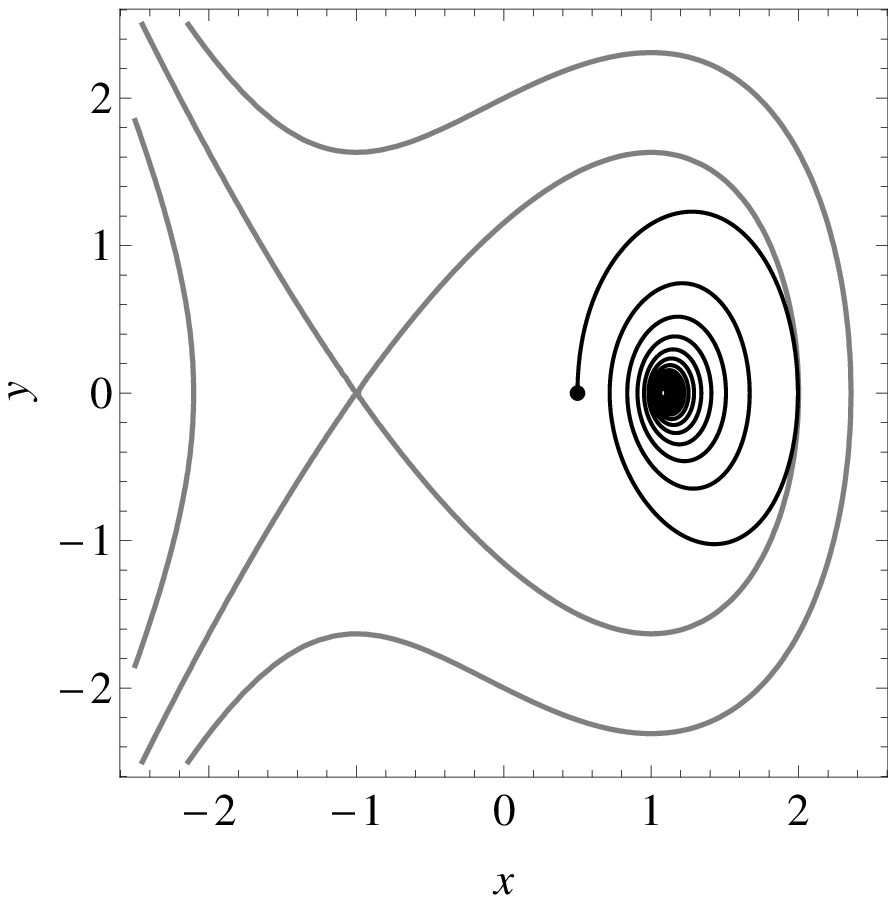}}
\caption{\small The evolution of $(x(t),y(t))$ for solutions of \eqref{Example} with $C=1.5$, $\kappa=0.5$ and $\lambda=1$. The black points correspond to initial data $(x(1),y(1))$. The gray curves correspond to level lines of $H(x,y;1)$.} \label{f1}
\end{figure}

\begin{figure}
\centering
\subfigure[$B=-1.5$ ]{\includegraphics[width=0.3\linewidth]{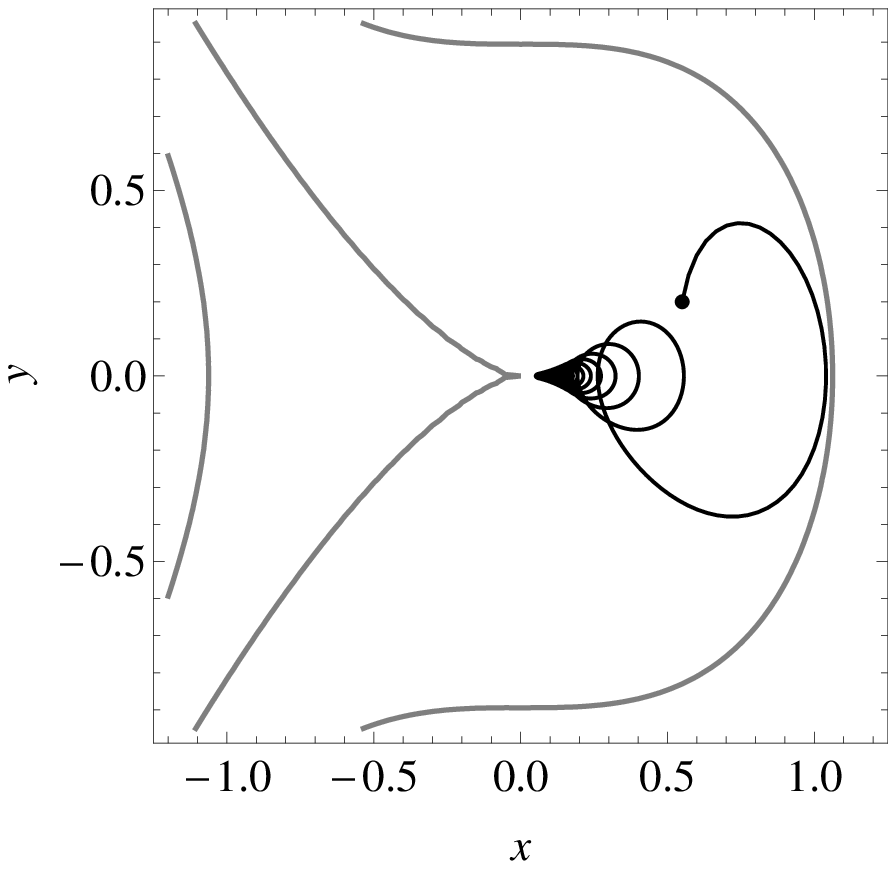}}
\subfigure[ $B=-0.6$]{\includegraphics[width=0.3\linewidth]{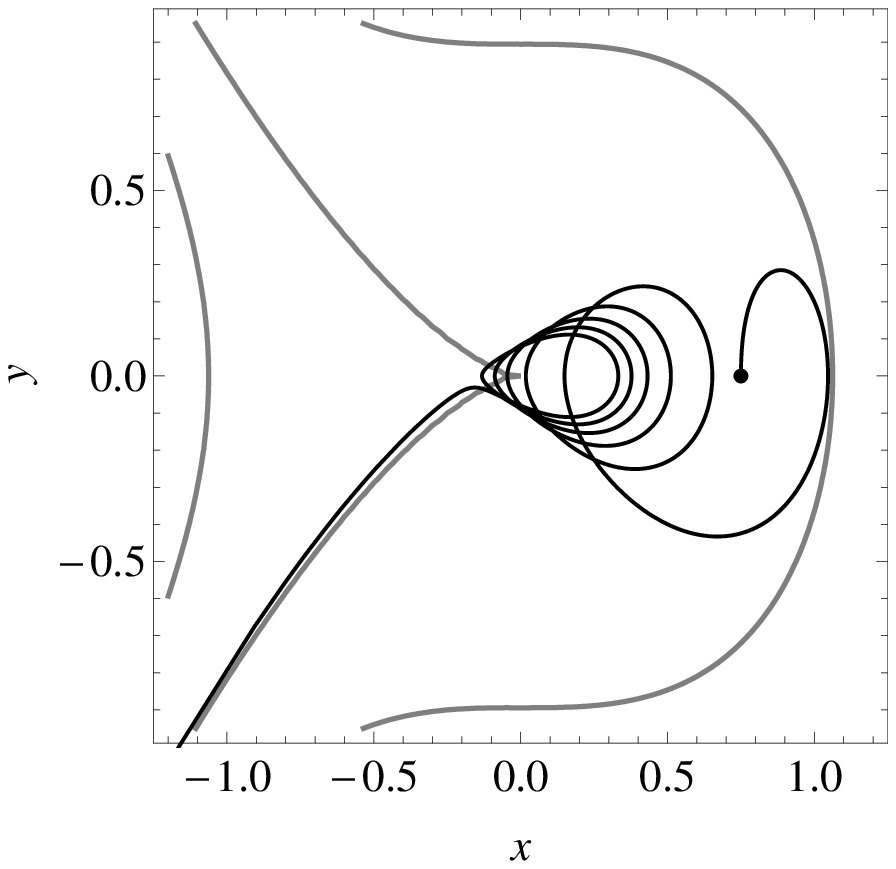}}
\subfigure[$B=0.5$]{\includegraphics[width=0.3\linewidth]{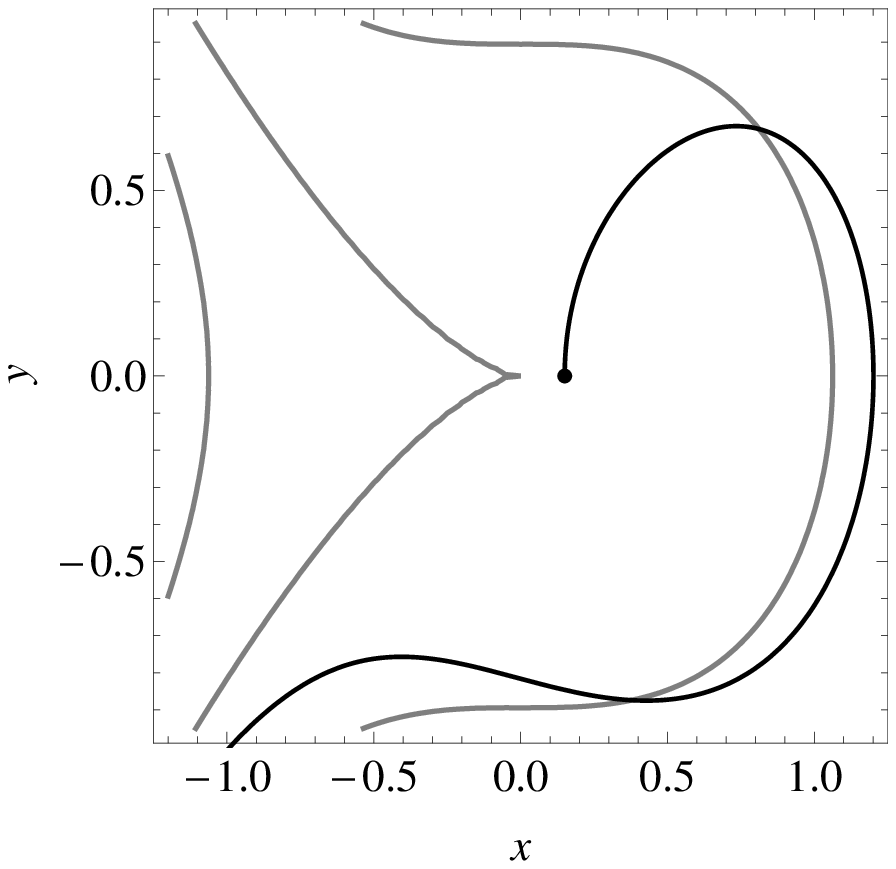}}
\caption{\small The evolution of $(x(t),y(t))$ for solutions of \eqref{Example} with $\kappa=1$, $C=1.5$ and $\lambda=0$. The black points correspond to initial data $(x(1),y(1))$. The gray curves correspond to level lines of $H(x,y;0)$.} \label{f2}
\end{figure}

\section{Particular solutions}
\label{sec2}
Consider particular solutions of perturbed system \eqref{FulSys} tending to the fixed points of the limiting Hamiltonian system.
The simplest asymptotic expansion for such solutions can be constructed in the form of power series with constant coefficients:
\begin{gather}
\label{AS}
x(t)=\sigma+\sum_{k=1}^{\infty} t^{-\frac kq} x_k, \quad y(t)=\sum_{k=1}^{\infty} t^{-\frac kq} y_k, \quad t\to\infty.
\end{gather}
Substituting these series in system \eqref{FulSys} and grouping the terms of the same power of $t$ yield $\sigma^2=\lambda$ and
the following chain of linear equations for the coefficients $x_k$, $y_k$ as $k\geq 1$:
\begin{gather}
    \label{sys}
        \begin{split}
            -2\sigma  w(\sigma) x_k +G_k(\sigma,0)=g_k,\quad
            y_k+F_k(\sigma,0)=f_k,
        \end{split}
\end{gather}
where
$g_1= f_1= 0$, and $g_k$, $f_k$ as $k\geq 2$ are expressed through $x_1,y_1,\dots, x_{k-1},y_{k-1}$:
\begin{eqnarray*}
g_k&=& -  \sum_{
        \substack{
            l+\alpha_1+\ldots+i\alpha_i+\beta_1+\ldots+j\beta_j=k\\
           \alpha_1+\dots+\alpha_i+\beta_1+\dots+\beta_j\geq 1
                }
            }  x_1^{\alpha_1}\cdots x_i^{\alpha_i}y_1^{\beta_1}\cdots y_j^{\beta_j}\frac{ \partial^{\alpha_1+\dots+\alpha_i}_x\partial_y^{\beta_1+\dots+\beta_j} G_l(\sigma,0) }{(\alpha_1+\dots+\alpha_i+\beta_1+\dots+\beta_j)!}- \frac{k-q}{q} y_{k-q}\\
            && - \sum_{
        \substack{
           \alpha_1+\ldots+i\alpha_i=k\\
         \alpha_1+\dots+\alpha_i\geq 2
                }
           }
            \frac{ x_1^{\alpha_1}\cdots x_i^{\alpha_i} }{(\alpha_1+\dots+\alpha_i)!}\partial^{1+\alpha_1+\dots+\alpha_i}_x  V(\sigma),\\
f_k&=& -  \sum_{
        \substack{
            l+\alpha_1+\ldots+i\alpha_i+\beta_1+\ldots+j\beta_j=k\\
           \alpha_1+\dots+\alpha_i+\beta_1+\dots+\beta_j\geq 1
                }
            }  x_1^{\alpha_1}\cdots x_i^{\alpha_i}y_1^{\beta_1}\cdots y_j^{\beta_j}\frac{ \partial^{\alpha_1+\dots+\alpha_i}_x\partial_y^{\beta_1+\dots+\beta_j} F_l(\sigma,0)}{(\alpha_1+\dots+\alpha_i+\beta_1+\dots+\beta_j)!} - \frac{k-q}{q} x_{k-q},
\end{eqnarray*}
where $x_i= y_i=0$ for $i\leq 0$. System \eqref{sys} is solvable with $\sigma=\pm\sqrt\lambda$ whenever $\lambda>0$.
It can easily be checked that if \begin{gather}\label{as0}
  \exists\, n\leq q: \quad   F_i(x,y)\equiv 0, \quad G_i(x,y)\equiv 0\quad \text{ for all} \quad i<n,
\end{gather}
then $x_k=y_k=0$ in \eqref{AS} at least for all $1\leq k< n$.

Thus, we have the following.
\begin{Th}
\label{Th1}
If $\lambda>0$, system \eqref{FulSys} has two different solutions $x_\ast(t)$, $y_\ast(t)$ with asymptotic expansion in the form \eqref{AS} with $\sigma=\pm\sqrt\lambda$.
\end{Th}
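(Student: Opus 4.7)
The plan is to combine the formal construction already sketched above with a standard argument promoting the formal power-series expansion into a genuine solution with the prescribed asymptotic behaviour.

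First I would record the formal step. Substituting \eqref{AS} into \eqref{FulSys} and matching the leading order gives $(\sigma^2 - \lambda)\, w(\sigma) = 0$; since $w(\sigma) > 0$ and $\lambda > 0$, this forces $\sigma = \pm\sqrt{\lambda}$, giving two distinct fixed points of the limiting Hamiltonian system. For either choice of $\sigma$ the recurrence \eqref{sys} is triangular and nondegenerate: the coefficient $-2\sigma\, w(\sigma)$ is nonzero, so at every step $x_k$ is determined from the already-computed lower-order coefficients, and then $y_k$ is read off from the second equation. This produces two distinct formal series.

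Next, for any integer $N \geq 1$ I would set
\[
X_N(t) := \sigma + \sum_{k=1}^{N} t^{-k/q} x_k, \qquad Y_N(t) := \sum_{k=1}^{N} t^{-k/q} y_k,
\]
so that by the very construction of the coefficients the substitution of $(X_N, Y_N)$ into \eqref{FulSys} leaves a residual of order $t^{-(N+1)/q}$ as $t \to \infty$. Introducing the new unknowns $u = x - X_N$, $v = y - Y_N$ transforms \eqref{FulSys} into a system for $(u,v)$ whose linear part converges as $t \to \infty$ to the Jacobian $A_\sigma$ of the unperturbed vector field at $(\sigma,0)$, with an inhomogeneity of order $t^{-(N+1)/q}$ and a nonlinear part at least quadratic in $(u,v)$.

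The main obstacle is then to produce an actual solution $(u,v)(t) \to 0$ of this reduced system. I would set this up as a fixed-point problem via the variation-of-constants formula for the frozen linearization $A_\sigma$, on the Banach space of continuous functions $(u,v) : [T,\infty) \to \mathbb{R}^2$ with finite norm $\sup_{t \geq T} t^{\alpha}\, \|(u,v)(t)\|$ for a suitable $0 < \alpha < (N+1)/q$. For $\sigma = -\sqrt{\lambda}$ the eigenvalues of $A_\sigma$ are real and of opposite sign (saddle), so the standard projection onto the stable direction combined with reformulation as an integral equation on $[T,\infty)$ yields a contraction for $T$ sufficiently large. For $\sigma = +\sqrt{\lambda}$ the eigenvalues are purely imaginary (centre); here one chooses $N$ large enough that $(N+1)/q > 1$, so that the inhomogeneity is absolutely integrable, and the boundedness of the linear flow together with the quadratic smallness of the nonlinearity makes the integral operator a contraction for $T$ large. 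The resulting fixed point is the required true solution, whose asymptotic expansion coincides with \eqref{AS} by construction.
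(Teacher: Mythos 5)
Your formal step coincides with the paper's (the relations $\sigma^2=\lambda$ and the nondegenerate triangular recurrence \eqref{sys} are exactly what precedes the theorem), but the paper's actual proof is a one-line citation of Kuznetsov's theorem and Kozlov--Furta \cite{KF13,AK89} on the existence of true solutions possessing a given formal power-series asymptotics. You are therefore attempting a self-contained replacement, and your saddle branch ($\sigma=-\sqrt\lambda$) is sound: the exponential dichotomy of $A_\sigma$ dominates every algebraically decaying error, so the projected integral equation contracts for large $T$.

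The centre branch ($\sigma=+\sqrt\lambda$) has a genuine gap. After subtracting $X_N,Y_N$, the linear part of the reduced system is $A(t)=A_\sigma+B(t)$ with $B(t)=\mathcal O(t^{-n/q})$, and assumption \eqref{as0} only gives $n\leq q$, so $\|B(t)\|$ is \emph{not} integrable (except in the borderline case $n=q$, where it is $\mathcal O(t^{-1})$). Choosing $N$ large makes the inhomogeneity integrable but does nothing to $B(t)$, which is fixed by the perturbation, not by the truncation order. Consequently the term $\int_t^\infty e^{A_\sigma(t-s)}B(s)(u,v)(s)\,ds$ in your fixed-point map is bounded only by $C\sup_s s^{\alpha}\|(u,v)(s)\|\cdot t^{\,1-n/q-\alpha}$, which is not $o(t^{-\alpha})$ when $n<q$; the operator does not map your weighted ball into itself, for any $N$. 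The deeper reason is that your premise ``boundedness of the linear flow'' refers to the frozen flow $e^{A_\sigma t}$, whereas the relevant object is the fundamental matrix of $\dot z=A(t)z$, which in general grows like $\exp\bigl(c\,t^{1-(n+h)/q}\bigr)$ --- this is precisely the instability mechanism quantified in Theorem~\ref{st0} when $\gamma_{n+h}(\lambda)>0$, and it beats every polynomial weight. To close the argument you would have to either invoke a Kuznetsov-type theorem (as the paper does), or build the non-autonomous linear part into the solved portion of the scheme and control its transition matrix via the dichotomy/monotonicity analysis of Section~\ref{sec3}, integrating from $t=\infty$ backwards in the expanding case and forwards in the contracting case.
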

\begin{proof}
The existence of solutions with power-law asymptotics at infinity in time follows from~\cite{KF13,AK89}.
\end{proof}

If $\lambda=0$, the form of asymptotic solutions depends essentially on the structure of perturbations \eqref{FG}. Together with \eqref{as0}, consider one of the following assumptions:
\begin{align}
\label{as1}
        &\exists\,m<n: \quad  G_{n+m}(0,0)\neq 0, \quad G_{n+l}(0,0)= 0  \quad \forall\, l<m;\\
\label{as2}
         & G_{n+l}(0,0)= 0  \quad \forall\, l< n, \quad G_{2n}(0,0)-F_n(0,0)\big(\partial_y G_{n}(0,0)+\delta_{n,q}\big)\neq 0,
\end{align}
where $\delta_{n,q}$ is the Kronecker delta.

{\bf 1}. Let assumptions \eqref{as0} and \eqref{as1} hold. Then the asymptotic solution is constructed in the form:
\begin{gather}
\label{AS1}
x(t)=\sum_{k=n+m}^{\infty} t^{-\frac{k}{2q}} x_k, \quad y(t)=\sum_{k=2n}^{\infty} t^{-\frac{k}{2q}} y_k,\quad t\to\infty,
\end{gather}
where
\begin{gather}
    \label{eq1}
        -x_{n+m}^2 w(0)+G_{n+m}(0,0)=0,\quad y_{2n}+F_n(0,0)=0.
\end{gather}
Note that the asymptotic solution in the form \eqref{AS1} does not exist when $G_{n+m}(0,0)< 0$. If $G_{n+m}(0,0)>0$, system \eqref{eq1} has two different roots: $x_{n+m}=\pm\mu$, $y_{2n}=-F_n(0,0)$, where
\begin{gather}
\label{mueq}
    \mu=\sqrt{\frac{G_{n+m}(0,0)}{w(0)}}.
\end{gather}
The remaining coefficients $x_{n+m+k}$, $y_{2n+k}$  as $k\geq 1$ are determined from the following system of equations:
\begin{gather*}
        \begin{split}
            -2  x_{n+m} w(0) x_{n+m+k} =\tilde g_k,\quad
            y_{2n+k}=\tilde f_k,
        \end{split}
\end{gather*}
where the functions $\tilde g_k$, $\tilde f_k$ are expressed through $x_{n+m},\dots,x_{n+m+k-1},y_{2n},\dots,y_{2n+k-1}$.
If $n=1$ and $m=0$, we have $x_1=\pm\sqrt{G_1(0,0)/w(0)}$, $y_2=-F_1(0,0)$,
\begin{eqnarray*}
\tilde g_1&\equiv& x_{1}^3 \partial_x w(0) -x_{1} \partial_x G_1(0,0),\\
\tilde g_2&\equiv& x_2^2 w(0)+  3x_1^2x_2 \partial_x w(0) +\frac{x_1^4}{2} \partial^2_xw(0)  -G_2(0,0)-x_2\partial_x G_1(0,0)-y_2\partial_yG_1(0,0)-\frac{x_1^2}{2}\partial_x^2G_1(0,0),\\
\tilde f_3&\equiv&-x_1\partial_x F_1(0,0),\\
\tilde f_4&\equiv&-F_2(0,0)-x_2\partial_x F_1(0,0)-y_2\partial_y F_1(0,0)-\frac{x_1^2}{2}\partial^2_x F_1(0,0).
\end{eqnarray*}
 \begin{Th}
 \label{Th2}
 Let $\lambda=0$ and assumptions \eqref{as0}, \eqref{as1} hold. If
 $G_{n+m}(0,0)>0$, then system \eqref{FulSys} has two different solutions $x_\ast(t)$, $y_\ast(t)$ with asymptotic expansion in the form \eqref{AS1} with $x_{n+m}=\pm\mu$.
\end{Th}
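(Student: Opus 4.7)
The plan is to parallel the proof of Theorem~\ref{Th1}: first construct the formal series \eqref{AS1} term by term and verify its consistency, then invoke the existence result of~\cite{KF13,AK89} to promote the formal series to an actual solution. Substituting the ansatz~\eqref{AS1} into \eqref{FulSys} with $\lambda=0$, Taylor expanding $V(\cdot;0)$, $F_l(\cdot,\cdot)$ and $G_l(\cdot,\cdot)$ about the origin, and collecting terms of the same power $t^{-k/(2q)}$ is routine; since $x(t)$ starts at order $(n+m)/(2q)$, $y(t)$ at order $2n/(2q)$, and the external series~\eqref{FG} carries powers $t^{-k/q}=t^{-2k/(2q)}$, every monomial produces a power $t^{-\alpha/(2q)}$ with integer $\alpha$, so the grading is closed. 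The inequalities $m<n\leq q$ coming from \eqref{as0} and \eqref{as1} ensure that the $\dot x$ and $\dot y$ contributions are of strictly higher order than the leading algebraic balances, so the latter are exactly \eqref{eq1}; under $G_{n+m}(0,0)>0$ this yields the two real roots $x_{n+m}=\pm\mu$ together with $y_{2n}=-F_n(0,0)$.

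For $k\geq 1$ I would then argue by induction that the balance at order $t^{-(n+m+k)/(2q)}$ in the second equation reduces to a scalar linear equation $-2x_{n+m} w(0)\,x_{n+m+k}=\tilde g_k$, while the balance at order $t^{-(2n+k)/(2q)}$ in the first equation reduces to $y_{2n+k}=\tilde f_k$, with $\tilde g_k, \tilde f_k$ polynomial in the previously computed coefficients. Non-degeneracy of the diagonal coefficient $-2x_{n+m} w(0)=\mp 2\mu w(0)\neq 0$ uniquely determines each new pair, so two distinct formal series arise, one per choice of sign of $x_{n+m}$. The existence of genuine solutions $(x_\ast(t), y_\ast(t))$ of \eqref{FulSys} admitting these series as their $t\to\infty$ asymptotic expansions then follows from the same theorem of~\cite{KF13,AK89} cited in the proof of Theorem~\ref{Th1}, applied after truncating the formal series to sufficiently high order and studying the residual equation for the remainder.

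The main obstacle I anticipate is the combinatorial bookkeeping needed to show that $\tilde g_k, \tilde f_k$ depend only on strictly earlier coefficients, with no self-coupling at the leading order. The explicit formulas for $\tilde g_1, \tilde g_2, \tilde f_3, \tilde f_4$ displayed in the text for $(n,m)=(1,0)$ exhibit the general pattern, but writing out the general recursion carefully---tracking the contributions from the Taylor expansions, the $t^{-l/q}$ prefactors in~\eqref{FG}, and the shift produced by $\dot x, \dot y$---is the one genuinely technical point.
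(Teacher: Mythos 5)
Your proposal follows exactly the route the paper intends: the paper itself derives the leading balance \eqref{eq1}, notes the two roots $x_{n+m}=\pm\mu$, $y_{2n}=-F_n(0,0)$ when $G_{n+m}(0,0)>0$, states the triangular recursion $-2x_{n+m}w(0)x_{n+m+k}=\tilde g_k$, $y_{2n+k}=\tilde f_k$ for the higher coefficients, and (as in the proof of Theorem~\ref{Th1}) relies on \cite{KF13,AK89} to pass from the formal series to genuine solutions. Your observations that the half-integer grading in $t^{-1/(2q)}$ is closed and that the diagonal coefficient $-2x_{n+m}w(0)=\mp 2\mu w(0)\neq 0$ makes the recursion uniquely solvable are precisely the points the paper leaves implicit, so the proposal is correct and essentially identical to the paper's argument.
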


If $G_{n+m}(0,0)<0$, the asymptotic solution in the form \eqref{AS1} does not exists. Moreover, in this case, the trajectories of the perturbed system behave like solutions to the limiting system with $\lambda<0$. Indeed, the change of variables
\begin{gather*}
  x(t)=\vartheta_1  a(\tau) t^{-\frac{n+m}{2q}}, \quad  y(t)=\sqrt{\vartheta_1^3 w(0)} b(\tau)  t^{-\frac{3(n+m)}{4q}}, \quad \tau=\vartheta_2 \sqrt{\vartheta_1 w(0)} t^{1-\frac{n+m}{4q}},
\end{gather*}
with $\vartheta_1=(|G_{n+m}(0,0)|/w(0))^{1/2}>0$ and $\vartheta_2={4q}/(4q-n-m)>0$,
transforms \eqref{FulSys} into the following form:
\begin{gather*}
    \frac{da}{d\tau}=b+\mathcal O(\tau^{-\frac{3n-m}{4q-n-m}})+\mathcal O(\tau^{-1}), \quad \frac{db}{d\tau}=-(a^2+1 )+\mathcal O(\tau^{-\frac{2(n-m)}{4q-n-m}})+\mathcal O(\tau^{-1}), \quad \tau\to\infty.
\end{gather*}
It is readily seen that every solution of a corresponding Hamiltonian limiting system with $\tilde H(a,b)= a+a^3/3+b^2/2$ leaves a neighbourhood of the origin (see~Fig.~\ref{fab}, a). Hence, the trajectories of system \eqref{FulSys} with  $\lambda=0$ and initial data from the vicinity of the fixed point $(0,0)$ have the same property (see~Fig.~\ref{fab}, b).

\begin{figure}
\centering
\subfigure[]{\includegraphics[width=0.3\linewidth]{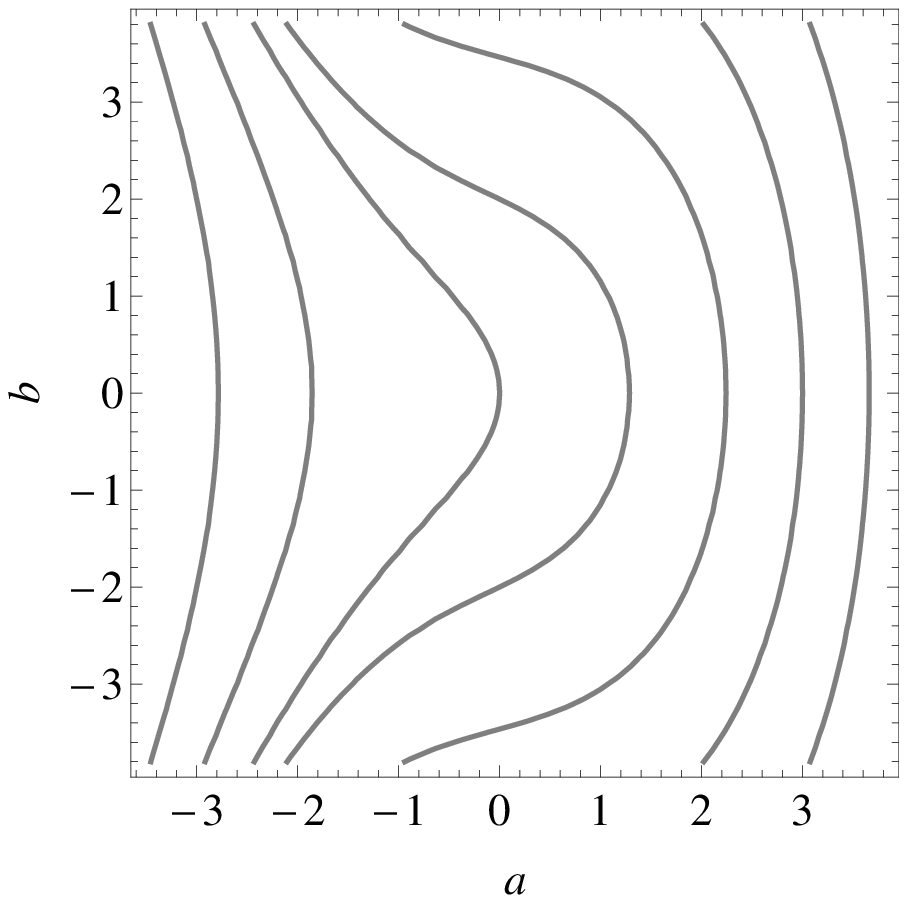}}\quad
\subfigure[]{\includegraphics[width=0.3\linewidth]{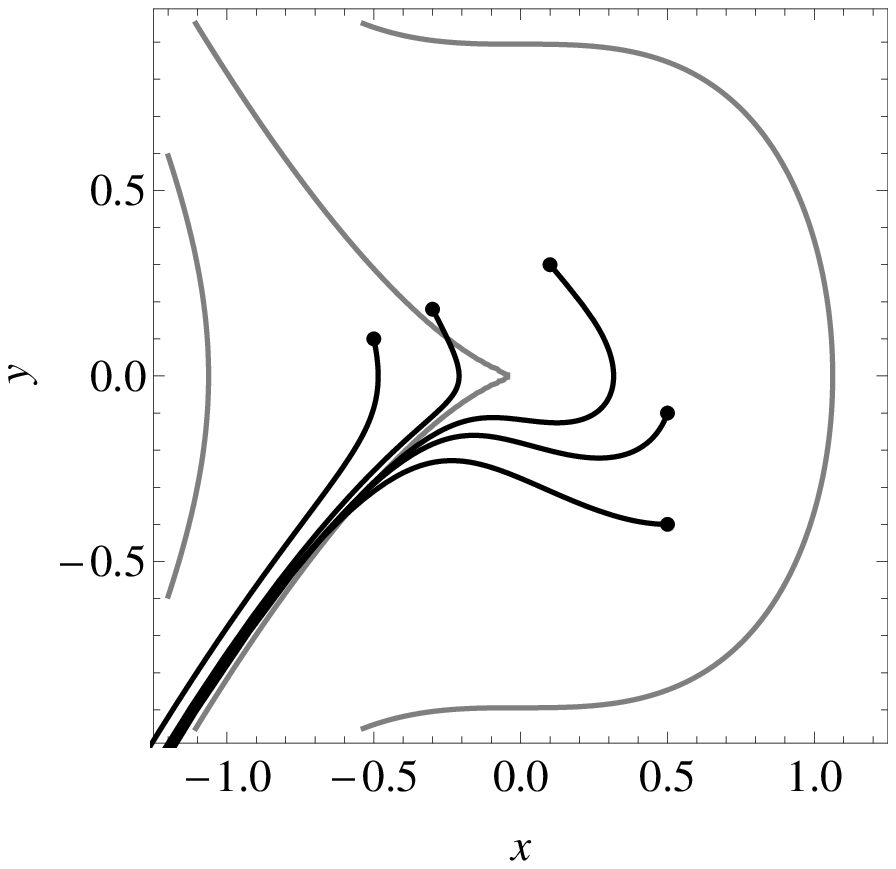}}
\caption{\small (a) The level lines of $\tilde H(a,b)$. (b) The evolution of $(x(t),y(t))$ for solutions of \eqref{Example} with $B=-1.5$, $C=-0.1$, $\kappa=1$ and $\lambda=0$. The black points correspond to initial data $(x(2),y(2))$. The gray curves correspond to level lines of $H(x,y;0)$.} \label{fab}
\end{figure}

{\bf 2}. Let assumptions \eqref{as0} and \eqref{as2} hold. The asymptotic solution is sought in the form:
\begin{gather}
\label{AS2}
x(t)=\sum_{k=n}^{\infty} t^{-\frac{k}{q}} x_k, \quad y(t)=\sum_{k=n}^{\infty} t^{-\frac{k}{q}} y_k,\quad t\to\infty.
\end{gather}
Substituting \eqref{AS2} in system \eqref{FulSys} and grouping the terms of the same power of $t$ yield the following chain of equations:
\begin{gather}
\label{sys2}
         \begin{split}
          &       -x_{n}^2 w(0)+x_{n}\partial_x G_{n}(0,0)+G_{2n}(0,0)+y_{n}\big(\partial_y G_{n}(0,0)+\delta_{n,q}\big)=0,\quad y_{n}+F_n(0,0)=0,\\
         & \Big( -2  x_{n} w(0) + \partial_x G_{n}(0,0) \Big) x_{n+k}+\Big(\partial_y G_{n}(0,0)+\Big(1+\frac{k}{q}\Big)\delta_{n,q}\Big)y_{n+k}=\hat g_k,\quad
            y_{n+k}=\hat f_k,
        \end{split}
\end{gather}
where the functions $\hat g_k$, $\hat f_k$ as $k\geq 1$ are expressed through $x_{n},y_n,\dots,x_{n+k-1},y_{n+k-1}$. For instance,
\begin{eqnarray*}
\hat g_k&\equiv& -G_{2n+k}(0,0)+w(0)\sum_{i=0}^{k-1}x_{n+i}x_{n+k-i}-\sum_{i=0}^{k-1}
  \big(x_{n+i}\partial_x  +y_{n+i}\partial_y \big)G_{n+k-i}(0,0)\\
           &&+\Big(1-\frac{2n+k}{q}\Big) (1-\delta_{n,q}) y_{2n+k-q},\\
\hat f_k&\equiv&-F_{n+k}(0,0)+\Big(1-\frac{n+k}{q}\Big)x_{n+k-q}  \quad \text{for}\ \ k\leq n-1,
\end{eqnarray*}
\begin{eqnarray*}
\hat g_n&\equiv& -G_{3n}(0,0)+w(0)\sum_{i=0}^{n-1} x_{n+i}x_{2n-i}-\sum_{i=0}^{n-1}\big(x_{n+i}\partial_x+y_{n+i}\partial_y  \big) G_{2n-i}(0,0)\\
           &&+x_n^3\partial_x w(0)-\Big(\frac{x_n^2}{2}\partial_x^2+x_n y_n\partial_x\partial_y  +\frac{y_n^2}{2}\partial_y^2  \Big) G_n(0,0) +\Big(1-\frac{3n}{q}\Big) (1-\delta_{n,q})  y_{3n-q},\\
\hat f_n&\equiv&-F_{2n}(0,0)+\Big(1-\frac{2n}{q}\Big)x_{2n-q}-\big(x_n\partial_x+y_n\partial_y \big) F_n(0,0),
\end{eqnarray*}
where it is assumed that $x_i=y_i=0$ for $i< n$.
It can easily be checked that system \eqref{sys2} is solvable whenever
\begin{gather*}
\Delta_n\equiv \big(\partial_x G_n(0,0)\big)^2-4 w(0)\Big( \big(\partial_y G_{n}(0,0)+\delta_{n,q}\big)F_n(0,0)-G_{2n}(0,0)\Big)>0.
\end{gather*}
In this case,  $x_n=\nu_\pm$, $y_n=-F_n(0,0)$, where
\begin{gather}
\label{nueq}
   \nu_\pm=\frac{1}{2w(0)}\Big(\partial_xG_n(0,0)\pm \sqrt{\Delta_n}\Big)\neq 0.
\end{gather}

 \begin{Th}
\label{Th3}
 Let $\lambda=0$ and assumptions \eqref{as0}, \eqref{as2} hold. If $\Delta_n>0$, system \eqref{FulSys} has two different solutions $x_\ast(t)$, $y_\ast(t)$ with asymptotic expansion in the form \eqref{AS2} with $x_{n}=\nu_\pm$.
\end{Th}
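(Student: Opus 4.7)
The plan is to construct the formal asymptotic series \eqref{AS2} explicitly, verify that the recursion \eqref{sys2} determines all its coefficients uniquely for each of the two leading profiles $\nu_\pm$, and then invoke the asymptotic integration results cited in the proof of Theorem~\ref{Th1} to pass from the formal expansion to an actual solution of \eqref{FulSys}.

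First I would fix the leading pair $(x_n,y_n)$ from the first two relations of \eqref{sys2}. Substituting $y_n=-F_n(0,0)$ into the equation for $x_n$ yields a quadratic whose discriminant is exactly $\Delta_n$, so the hypothesis $\Delta_n>0$ produces two distinct real roots $\nu_\pm$ given by \eqref{nueq}. For each $k\geq 1$ the pair $(x_{n+k},y_{n+k})$ then satisfies a triangular linear system: $y_{n+k}=\hat f_k$ is read off directly, and $x_{n+k}$ is recovered from the first relation provided the coefficient $-2x_n w(0)+\partial_x G_n(0,0)$ does not vanish. With $x_n=\nu_\pm$ this coefficient equals $\mp\sqrt{\Delta_n}$, which is nonzero precisely under the standing assumption; hence the series \eqref{AS2} is determined uniquely to all orders for each sign, producing two distinct formal solutions.

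The remaining step is to upgrade each formal series to an honest solution of \eqref{FulSys} with the prescribed asymptotics as $t\to\infty$, by invoking the general existence theorems of~\cite{KF13,AK89} as in the proof of Theorem~\ref{Th1}. This is the step I expect to carry the real content, since at the bifurcation value $\lambda=0$ the limiting Hamiltonian system is degenerate at the origin and no ready-made hyperbolicity is available from the unperturbed linearization. What saves the argument is that the perturbation forces a nontrivial leading coefficient $\nu_\pm$, and the associated linear coefficient $\mp\sqrt{\Delta_n}$ along the formal trajectory is nonzero; this plays the role of the non-degeneracy hypothesis required by the cited results. The single inequality $\Delta_n>0$ therefore does double duty, guaranteeing both the solvability of the formal recursion and the applicability of the realization step, and no further obstacle is anticipated.
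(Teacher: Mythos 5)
Your proposal follows essentially the same route as the paper: the paper's justification of Theorem~\ref{Th3} consists of the derivation of the recursion \eqref{sys2}, the observation that the leading quadratic has discriminant $\Delta_n$ and hence two distinct roots $\nu_\pm$ when $\Delta_n>0$, the nonvanishing of the coefficient $-2x_nw(0)+\partial_xG_n(0,0)=\mp\sqrt{\Delta_n}$ at higher orders, and an appeal to the existence results of~\cite{KF13,AK89} exactly as in Theorem~\ref{Th1}. Your computations (the triangular structure of the linear system for $(x_{n+k},y_{n+k})$ and the value $\mp\sqrt{\Delta_n}$) are correct and match the paper's.
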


Note that if $\Delta_{n}<0$, the asymptotic solution in the form \eqref{AS2} is not constructed. In this case, consider the following change of variables:
\begin{gather}\label{subs}
    \begin{split}
  x(t) &= \Big(\frac{\partial_x G_n(0,0)}{2w(0)}+ \vartheta_1 a(\tau)\Big)t^{-\frac{n}{q}}, \\
  y(t) &=  -F_n(0,0)t^{-\frac{n}{q}}+ \sqrt{\vartheta_1^3w(0)} b(\tau) t^{-\frac{3n}{2q}},\\
  \tau &= \vartheta_2\sqrt{\vartheta_1w(0)} t^{1-\frac{n}{2q}},
      \end{split}
\end{gather}
where $\vartheta_1=\sqrt{|\Delta_n|}/(2w(0))>0$ and $\vartheta_2={2q}/(2q-n)>0$.
Substituting \eqref{subs} in \eqref{FulSys} yields
\begin{gather*}
    \frac{da}{d\tau}=b+\mathcal O(\tau^{-\frac{n}{2q-n}})+\mathcal O(\tau^{-1}), \quad \frac{db}{d\tau}=-(a^2+1)+\mathcal O(\tau^{-\frac{n}{2q-n}})+\mathcal O(\tau^{-1}), \quad \tau\to\infty.
\end{gather*}
Thus, as in the previous case, we see that the qualitative behaviour of perturbed system \eqref{FulSys} is the same as that of the limiting system with $\lambda<0$.

The results of this section are shown in Table~\ref{Table1}.

\begin{table}
\begin{tabular}{l|l|l|l}
\hline \rule{0cm}{0.5cm}
  {\bf Case }               & {\bf Assumptions   }                  & {\bf Asymptotic behaviour} & {\bf Ref.}    \\ [0.1cm]
\hline \rule{0cm}{0.5cm}
    $ \lambda>0$ & \eqref{as0}                      & \eqref{AS}, $\sigma=\pm \sqrt\lambda$ & Th.~\ref{Th1} \\ [0.1cm]
\hline \rule{0cm}{0.5cm}
 $ \lambda=0$ & \eqref{as0}, \eqref{as1}, $G_{n+m}(0,0)>0$   & \eqref{AS1}, $x_{n+m}=\pm\mu$ & Th.~\ref{Th2} \\ [0.1cm]
\cline{2-4}  \rule{0cm}{0.5cm}
  & \eqref{as0}, \eqref{as2}, $\Delta_n>0$     &  \eqref{AS2}, $x_{n}=\nu_\pm$& Th.~\ref{Th3} \\ [0.1cm]
\hline
\end{tabular}
\bigskip
\caption{{\small Particular solutions of system \eqref{FulSys}.}}
\label{Table1}
\end{table}

\section{Stability analysis}
\label{sec3}
\subsection{Linear analysis}
Let $x_\ast(t)$, $y_\ast(t)$ be one of the particular solutions with asymptotics \eqref{AS}, \eqref{AS1} or \eqref{AS2}. The substitution $x(t)=x_\ast(t)+\xi(t)$, $y(t)=y_\ast(t)+\eta(t)$ into \eqref{FulSys} gives the following system with a fixed point at $(0,0)$:
\begin{align}
\label{XES}
\begin{split}
&\frac{d\xi}{dt}=\eta+F(x_\ast+\xi,y_\ast+\eta,t)-F(x_\ast,y_\ast,t), \\
&\frac{d\eta}{dt}=-\partial_x V(x_\ast+\xi;\lambda)+\partial_x V(x_\ast;\lambda)+G(x_\ast+\xi,y_\ast+\eta,t)-G(x_\ast,y_\ast,t).
\end{split}
\end{align}
Consider the linearized system:
\begin{gather*}
    \frac{d}{dt}\begin{pmatrix} \xi \\ \eta \end{pmatrix} = A(t) \begin{pmatrix} \xi  \\ \eta \end{pmatrix}, \ \
    A(t):=
        \begin{pmatrix*}[r]
            \displaystyle   \partial_x F(x_\ast,y_\ast,t) & \displaystyle  1+\partial_y F(x_\ast,y_\ast,t)  \\
            \displaystyle   -\partial^2_xV(x_\ast;\lambda)+\partial_x G(x_\ast,y_\ast,t) & \displaystyle   \partial_y G(x_\ast,y_\ast,t)   \end{pmatrix*}.
\end{gather*}
 The roots of the corresponding characteristic equation $|A(t)-e I|=0$ have the form
\begin{gather}\label{ev}
    e_\pm(t)=\frac{1}{2}\Big( {\hbox{\rm tr}}  A(t) \pm \sqrt{ D(t) }\Big),
\end{gather}
where $D(t)=  ({\hbox{\rm tr}}{A}(t) )^2 -4 {\hbox{\rm det}}  {A}(t)$.

Let $\lambda>0$ and assumption \eqref{as0} hold. Then $x_\ast(t)=\sigma+\mathcal O(t^{-n/q})$, $y_\ast(t)=\mathcal O(t^{-n/q})$ as $t\to\infty$, where $\sigma=\pm\sqrt\lambda$. In this case,
\begin{eqnarray*}
     D(t) = -8 \sigma w(\sigma)+\mathcal O(t^{-\frac nq}), \quad {\hbox{\rm tr}}  A(t) =\mathcal O( t^{-\frac nq}),\quad t\to\infty.
 \end{eqnarray*}
  It can easily be checked that if $\sigma=-\sqrt\lambda$, the eigenvalues $e_+(t)$ and $e_-(t)$ are real of different signs:
\begin{gather*}
e_{\pm}(t)=\pm  \lambda ^{\frac 14}\sqrt{2w(-\sqrt\lambda)}+\mathcal O(t^{-\frac nq}), \quad t\to\infty.
\end{gather*}
This implies that the fixed point $(0,0)$ of system \eqref{XES} is a saddle in the asymptotic limit, and the corresponding
particular solution $x_\ast(t)$, $y_\ast(t)$ with $\sigma=-\sqrt\lambda$ is unstable (see, for example,~\cite{HK02}).
\begin{Th}\label{uns0}
Let $\lambda>0$ and assumption \eqref{as0} hold. Then the solution $x_\ast(t)$, $y_\ast(t)$  with asymptotics \eqref{AS}, $\sigma=-\sqrt\lambda$ is unstable.
\end{Th}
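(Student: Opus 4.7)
The plan is to conclude the instability purely from the linearization, which the paper has already set up. After the shift $\xi=x-x_\ast(t)$, $\eta=y-y_\ast(t)$, the problem becomes the stability of the origin in system \eqref{XES}. The computation preceding the theorem shows that the coefficient matrix $A(t)$ converges as $t\to\infty$ to the constant matrix
\begin{gather*}
A_\infty=\begin{pmatrix} 0 & 1 \\ -\partial_x^2 V(\sigma;\lambda) & 0 \end{pmatrix}=\begin{pmatrix} 0 & 1 \\ 2\sqrt{\lambda}\,w(-\sqrt\lambda) & 0 \end{pmatrix}, \quad \sigma=-\sqrt\lambda,
\end{gather*}
with remainder of order $t^{-n/q}\to 0$, and the eigenvalues of $A_\infty$ are the nonzero real numbers $\pm\lambda^{1/4}\sqrt{2w(-\sqrt\lambda)}$ of opposite signs.

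The next step is to invoke the standard result that if a linear system $\dot z = A(t)z$ with $A(t)\to A_\infty$ as $t\to\infty$, where $A_\infty$ is hyperbolic with at least one eigenvalue of positive real part, then the system admits an exponential dichotomy near infinity with a nontrivial unstable subspace; this is a classical consequence of the Levinson/Coppel perturbation theory (cf.\ the reference \cite{HK02} already used). Combined with the fact that the nonlinear remainders in \eqref{XES} are quadratic in $(\xi,\eta)$ uniformly in $t$ (by smoothness of $H$, $F$, $G$ and boundedness of $x_\ast$, $y_\ast$), this puts us in the setting of the first Lyapunov instability theorem for non-autonomous systems: there exist solutions emanating arbitrarily close to the origin that leave a fixed neighbourhood. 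This yields instability of $x_\ast(t),y_\ast(t)$.

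The only potentially delicate point is verifying that the decay rate $t^{-n/q}$ of $A(t)-A_\infty$ is tame enough to preserve the hyperbolic splitting of $A_\infty$; however, since $A(t)-A_\infty\to 0$ and $A_\infty$ has no imaginary eigenvalues, the dichotomy persists without any integrability requirement, so no obstruction arises. The remainder of the argument is routine and a single citation to a standard instability-by-linearization theorem suffices.
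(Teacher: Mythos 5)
Your argument is correct and takes essentially the same route as the paper: the author likewise observes that for $\sigma=-\sqrt\lambda$ the linearization matrix tends to a hyperbolic constant matrix with real eigenvalues $\pm\lambda^{1/4}\sqrt{2w(-\sqrt\lambda)}$ of opposite signs and concludes instability by citing a standard instability-by-linearization result (\cite{HK02}). The additional detail you give on the persistence of the dichotomy and the uniformly quadratic nonlinear remainder only makes explicit what the paper leaves to the reference.
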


Now let $\lambda=0$. Consider the particular solutions $x_\ast(t)$, $y_\ast(t)$ having asymptotics \eqref{AS1} with $x_{n+m}=\pm\mu$, where $\mu>0$ is defined by \eqref{mueq}. Then
\begin{eqnarray*}
         D(t)= -8 w(0) x_{n+m} t^{-\frac{n+m}{2q}} \big(1+\mathcal O(t^{-\frac{1}{2q}})\big),\quad {\hbox{\rm tr}}  A(t)=\mathcal O( t^{-\frac nq}),\quad t\to\infty.
 \end{eqnarray*}
If $x_{n+m}=-\mu$, then both eigenvalues $e_\pm(t)$ are real of different signs: $e_{\pm}(t)=\pm t^{- (n+m)/{4q}} \sqrt{8w(0)\mu} \big(1+\mathcal O(t^{-{1}/{2q}})\big)$, and the corresponding particular solution $x_\ast(t)$, $y_\ast(t)$ with asymptotics \eqref{AS1} is unstable.
\begin{Th}\label{uns1}
Let $\lambda=0$ and assumptions \eqref{as0}, \eqref{as1} hold. Then the solution $x_\ast(t)$, $y_\ast(t)$ with asymptotics \eqref{AS1}, $x_{n+m}=-\mu$ is unstable.
\end{Th}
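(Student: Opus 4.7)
The plan is to proceed as in Theorem~\ref{uns0}, adapting the saddle-point argument to the fact that the leading eigenvalues of $A(t)$ now decay to zero as $t\to\infty$. For $x_{n+m}=-\mu$ the eigenvalues $e_\pm(t)$ given by \eqref{ev} are real, of opposite sign, and distinct for all sufficiently large $t$; since $m<n\leq q$, the exponent $(n+m)/(4q)$ is strictly less than $1/2$, so
\[
\int^{\infty}e_+(s)\,ds=+\infty,\qquad \int^{\infty}e_-(s)\,ds=-\infty.
\]

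First I would construct a smooth invertible matrix $P(t)$ whose columns are normalised eigenvectors of $A(t)$ corresponding to $e_\pm(t)$; this is routine because the spectral gap $|e_+(t)-e_-(t)|$ is of order $t^{-(n+m)/(4q)}$ and $A(t)$ inherits a convergent expansion in fractional powers of $t$ from \eqref{AS1} and \eqref{FG}. The substitution $(\xi,\eta)^{\top}=P(t)w$ reduces the linearised system to
\[
\dot w=\mathrm{diag}\bigl(e_+(t),e_-(t)\bigr)w+R(t)w,
\]
where $R(t)$ collects the residual of the approximate diagonalisation together with $-P(t)^{-1}\dot P(t)$. Using the power-series structure of $A(t)$, one verifies that $R(t)$ is small compared with the spectral gap in the sense required by Levinson's asymptotic integration theorem (cf.~\cite{HK02}). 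That theorem then produces a fundamental matrix for the linearised equation with one column growing like $\exp\!\bigl(\int_{t_0}^{t}e_+(s)\,ds\bigr)$, hence unbounded as $t\to\infty$.

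To transfer instability to the full system \eqref{XES}, whose nonlinear part is at least quadratic in $(\xi,\eta)$ with coefficients bounded for $t\geq t_0$, I would apply the corresponding asymptotic unstable-manifold theorem~\cite{HK02}: the linear expanding direction with $\int^{\infty}e_+(s)\,ds=+\infty$ yields a one-parameter family of solutions of \eqref{XES} starting arbitrarily close to the trivial fixed point and leaving any fixed neighbourhood of it in finite time, which is exactly Lyapunov instability of the particular solution $(x_\ast(t),y_\ast(t))$.

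The main technical obstacle is that $e_\pm(t)\to 0$, so the classical hyperbolicity argument used in Theorem~\ref{uns0} does not apply directly; instead one must invoke asymptotic integration, which forces one to compare the perturbation $R(t)$ with the shrinking spectral gap. The bound $(n+m)/(4q)<1/2$ is what makes this comparison work: it ensures divergence of $\int^{\infty}e_+(s)\,ds$ and leaves enough margin for the off-diagonal contribution of $R(t)$ to be absorbed; once this bookkeeping is done, the rest is standard.
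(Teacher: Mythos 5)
Your proposal follows essentially the same route as the paper: the paper's argument for this theorem is precisely the linearization computation displayed just before the statement, showing that for $x_{n+m}=-\mu$ the eigenvalues $e_\pm(t)=\pm t^{-(n+m)/(4q)}\sqrt{8w(0)\mu}\,(1+\mathcal O(t^{-1/(2q)}))$ are real of opposite sign with $(n+m)/(4q)<1$, so that $(0,0)$ is a saddle in the asymptotic limit and instability follows by reference to~\cite{HK02}. You supply more detail (the diagonalisation, asymptotic integration, and the transfer to the nonlinear system) than the paper does, but the underlying idea and conclusion coincide.
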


Finally, consider the particular solutions $x_\ast(t)$, $y_\ast(t)$ having asymptotics \eqref{AS2} with $x_{n}=\nu_\pm$, where $\nu_-<\nu_+$ are defined by \eqref{nueq}. In this case,
\begin{eqnarray*}
     D(t)= 4\mathcal N_n t^{-\frac nq} \big(1+\mathcal O(t^{-\frac{1}{q}})\big), \quad {\hbox{\rm tr}}  A(t) =\mathcal O( t^{-\frac nq}),\quad t\to\infty,
 \end{eqnarray*}
where $\mathcal N_{n}:=- 2 w(0) x_{n}+\partial_x G_n(0,0)$. If $\mathcal N_{n}>0$, then both eigenvalues \eqref{ev} are real of different signs: $e_{\pm}(t)=\pm t^{- n/{2q}} \sqrt{4\mathcal N_{n}} \big(1+\mathcal O(t^{-{1}/{q}})\big)$, and the corresponding particular solution $x_\ast(t)$, $y_\ast(t)$ with asymptotics \eqref{AS2} is unstable.
It can easily be checked that  $\mathcal N_{n}=\sqrt{\Delta_n}>0$ if $x_{n}=\nu_-$; and $\mathcal N_{n}=-\sqrt{\Delta_n}<0$ if $x_{n}=\nu_+$.
Thus, we have the following:

\begin{Th}\label{uns2}
Let $\lambda=0$ and assumptions \eqref{as0}, \eqref{as2} hold.
Then the solution $x_\ast(t)$, $y_\ast(t)$ with asymptotics \eqref{AS2}, $x_{n}=\nu_-$ is unstable.
\end{Th}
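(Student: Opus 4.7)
The strategy is to conclude instability from the spectral data already computed in the text, following exactly the same pattern as in the proofs of Theorems~\ref{uns0} and~\ref{uns1}. The first thing to verify is that $x_n=\nu_-$ really does give $\mathcal N_n>0$: from \eqref{nueq} we have $2w(0)\nu_\pm=\partial_xG_n(0,0)\pm\sqrt{\Delta_n}$, hence $\mathcal N_n=-2w(0)\nu_\pm+\partial_xG_n(0,0)=\mp\sqrt{\Delta_n}$, so that $\mathcal N_n=+\sqrt{\Delta_n}>0$ for $x_n=\nu_-$, as asserted in the text. Combined with the displayed expansions of $D(t)$ and $\mathrm{tr}\,A(t)$, this shows that, for $t$ large, the matrix $A(t)$ has real eigenvalues of opposite signs, both of order $t^{-n/(2q)}$.

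The only real subtlety — and the main obstacle — is that these eigenvalues vanish as $t\to\infty$, so the autonomous Grobman--Hartman theorem does not apply directly. I would handle this exactly as in the analogous earlier cases, by performing a time-stretching together with a diagonal rescaling of the dependent variables, in the spirit of \eqref{subs}, of the form
\[
\xi = u,\qquad \eta = \sqrt{\mathcal N_n}\,t^{-n/(2q)}\,v,\qquad \tau = \frac{\sqrt{\mathcal N_n}}{1-n/(2q)}\,t^{1-n/(2q)}.
\]
This is legitimate because assumption \eqref{as0} gives $n\leq q$, so $n/(2q)\leq 1/2<1$ and $\tau\to\infty$ as $t\to\infty$. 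A routine computation shows that the linear part of \eqref{XES} becomes, in the new variables,
\[
\frac{du}{d\tau}=v+o(1),\qquad \frac{dv}{d\tau}=u+o(1),\qquad \tau\to\infty,
\]
where the $o(1)$ remainders are linear in $(u,v)$ with coefficients decaying at infinity (they come from the lower-order terms in the expansions of $A(t)$ and from the $t$-derivatives introduced by the rescaling). The limiting autonomous matrix has eigenvalues $\pm 1$, i.e., a hyperbolic saddle at the origin, whose trivial solution is unstable.

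To transfer this to \eqref{XES} itself I would note that the nonlinear remainder in \eqref{XES} is $O(\xi^2+\eta^2)$ and is therefore still quadratically small in the rescaled variables $(u,v)$, so the instability of the asymptotically autonomous linear system carries over to the full nonlinear system by a standard saddle-cone (or quadratic Lyapunov-function) argument. This is exactly the situation covered by the reference~\cite{HK02} already invoked in Theorems~\ref{uns0} and~\ref{uns1}, and citing it closes the proof. In short, once the time-stretching step is in place, the argument is identical to the ones used in the two preceding theorems; the content of the statement is essentially the observation that $x_n=\nu_-$, not $\nu_+$, is the root for which $\mathcal N_n$ is positive.
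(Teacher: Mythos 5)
Your spectral computation is exactly the paper's: the verification that $\mathcal N_n=\mp\sqrt{\Delta_n}$, hence $\mathcal N_n=\sqrt{\Delta_n}>0$ precisely for $x_n=\nu_-$, together with the asymptotics of $D(t)$ and $\operatorname{tr}A(t)$, is all the paper itself offers before citing~\cite{HK02}, so on that level you have reconstructed the intended argument. The problem is in the extra step you add to cope with the decaying eigenvalues. Under your substitution $\xi=u$, $\eta=\sqrt{\mathcal N_n}\,t^{-n/(2q)}v$, $\tau\propto t^{1-n/(2q)}$, the linear part does become $u'=v+o(1)$, $v'=u+o(1)$; but the claim that the nonlinear remainder ``is therefore still quadratically small in the rescaled variables'' is false. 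The $\eta$-equation of \eqref{XES} contains the term $-\tfrac12\partial_x^3V(x_\ast;0)\,\xi^2+\dots=-w(0)\xi^2(1+o(1))$ with a coefficient that does \emph{not} decay (since $\partial_x^3V(0;0)=2w(0)>0$), whereas the linear term you normalize against is $\mathcal N_n t^{-n/q}\xi$. After dividing by $\mathcal N_n t^{-n/q}$ to obtain $dv/d\tau$, the quadratic term becomes of order $t^{n/q}u^2$: quadratic in $u$, but with an unbounded coefficient. It dominates the linear term as soon as $|u|\gtrsim t^{-n/q}$, i.e.\ as soon as the deviation $\xi$ is comparable to $x_\ast(t)$ itself. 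Consequently the saddle-cone (or quadratic Lyapunov function) argument you invoke does not close in any \emph{fixed} neighbourhood of $(u,v)=(0,0)$, which is what Lyapunov instability requires.

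The scaling that actually balances the linear and quadratic terms is the one the paper uses for the companion root $\nu_+$ in \eqref{XY3} (and in \eqref{subs}): $\xi=t^{-n/q}X$, $\eta=t^{-3n/(2q)}Y$, $\tau\propto t^{1-n/(2q)}$, under which the system tends to an autonomous planar system whose origin is a hyperbolic saddle of the limiting Hamiltonian $\tfrac12(Y^2-\sqrt{\Delta_n}\,X^2)+w(0)X^3/3$. Even with that scaling one must still explain why escape from the shrinking region $|\xi|\lesssim t^{-n/q}$ forces escape from a fixed $\varepsilon$-neighbourhood in the original variables (note that the other particular solution differs from this one only by $O(t^{-n/q})$, so the relevant $\varepsilon$ must be chosen depending on $t_0$). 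The paper glosses over this point entirely, so your attempt to be more careful is in the right spirit; but as written, the key quantitative claim on which your nonlinear step rests is incorrect, and the argument has a genuine gap there.
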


Consider the following cases that are not covered by Theorems~\ref{uns0}, \ref{uns1}, and \ref{uns2}:
\begin{description}
  \item[Case I] $\lambda>0$ and $x_\ast(t)=\sqrt\lambda+\mathcal O(t^{-n/q})$ as $t\to\infty$;
  \item[Case II] $\lambda=0$ and $x_\ast(t)=\mu t^{-(n+m)/2q}(1+\mathcal O(t^{-1/2q}))$ as $t\to\infty$;
  \item[Case III] $\lambda=0$ and $x_\ast(t)=\nu_+ t^{-n/q}(1+\mathcal O(t^{-1/q}))$ as $t\to\infty$.
\end{description}
In these cases, the roots of the characteristic equation are complex:
\begin{gather*}
    \begin{array}{ll}
     e_{\pm}(t)=\pm i  \lambda^{\frac 14} \sqrt{2w\big(\sqrt\lambda\big)}+\mathcal O(t^{-\frac nq}),&  \text{ in {\bf Case I}},\\
    e_{\pm}(t)=\pm i t^{- \frac{n+m}{4q}} \sqrt{8w(0)\mu} \big(1+\mathcal O(t^{-\frac{1}{2q}})\big),  &  \text{ in {\bf Case II}},\\
   e_{\pm}(t)=\pm i t^{- \frac{n}{2q}} \sqrt[4]{16 \Delta_n}\big(1+\mathcal O(t^{-\frac{1}{q}})\big),  &  \text{ in {\bf Case III}}
\end{array}
\end{gather*}
and $\Re e_\pm(t)=\mathcal O(t^{-n/q})$ as $t\to\infty$. Hence, the fixed point $(0,0)$ of system \eqref{XES} is a centre in the asymptotic limit, and the linear stability analysis fails (see, for example,~\cite{OS20a}).

\subsection{Nonlinear analysis}
In this subsection we use Lyapunov function method to investigate the cases where linear analysis does not work. This method is based on the construction of appropriate positive definite functions with a sign definite derivative along the trajectories of the system.

{\bf 1}. Consider first {\bf Case I} when $\lambda>0$.  Assume that
 \begin{gather}
   \label{as00}
   \exists\, h\geq 0: \quad \partial_x F_k(x,y)\equiv -\partial_y G_k(x,y)\quad  \forall\, k<n+h;
 \end{gather}
and define $\gamma_{n+h}(\lambda)=\partial_x F_{n+h}(\sqrt\lambda,0)+\partial_y G_{n+h}(\sqrt\lambda,0)$.

\begin{Th}\label{st0}
Let $\lambda>0$ and assumptions \eqref{as0}, \eqref{as00}  hold.
\begin{itemize}
  \item If $\gamma_{n+h}(\lambda)<0$, then the solution $x_\ast(t)$, $y_\ast(t)$  with asymptotics \eqref{AS}, $\sigma=\sqrt\lambda$ is stable.
  \item If $\gamma_{n+h}(\lambda)>0$, then the solution $x_\ast(t)$, $y_\ast(t)$  with asymptotics \eqref{AS}, $\sigma=\sqrt\lambda$ is unstable.
\end{itemize}
\end{Th}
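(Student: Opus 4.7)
The plan is to apply Lyapunov's second method to the shifted system \eqref{XES} using a carefully modified Hamiltonian. As the base function I take
\[
W_0(\xi,\eta,t)=\frac{\eta^2}{2}+V(x_\ast(t)+\xi;\lambda)-V(x_\ast(t);\lambda)-\xi\,\partial_xV(x_\ast(t);\lambda),
\]
which is positive definite for large $t$ near the origin: since $x_\ast(t)\to\sqrt\lambda$ and $\partial_x^2V(\sqrt\lambda;\lambda)=2\sqrt\lambda\,w(\sqrt\lambda)>0$, there exist $T_0>0$ and $c_1,c_2>0$ with $c_1(\xi^2+\eta^2)\le W_0\le c_2(\xi^2+\eta^2)$ for $t\ge T_0$ in a neighbourhood of the origin. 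A direct computation along \eqref{XES} makes the Hamiltonian contributions cancel and leaves
\[
\dot W_0=\bigl[\partial_xV(x_\ast+\xi;\lambda)-\partial_xV(x_\ast;\lambda)\bigr]\Delta F+\eta\,\Delta G+\partial_tW_0,
\]
where $\Delta F,\Delta G$ denote the increments of the perturbations at $(x_\ast+\xi,y_\ast+\eta)$. Expanding in powers of $t^{-1/q}$ by means of \eqref{FG} and \eqref{AS} presents $\dot W_0$ as a formal series $\sum_{k\ge 1}t^{-k/q}\mathcal Q_k(\xi,\eta)+\dots$ with polynomial $\mathcal Q_k$ of degree $\ge 2$ in $(\xi,\eta)$.

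The key step is a near-identity averaging that removes the oscillatory parts of the $\mathcal Q_k$. I would construct
\[
W=W_0+\sum_{k=1}^{n+h-1}t^{-k/q}W_k(\xi,\eta),
\]
with each $W_k$ a polynomial solving the cohomological equation $\{W_0^{\mathrm{quad}},W_k\}=\mathcal Q_k-\overline{\mathcal Q_k}$, where the Poisson bracket is taken with respect to the leading harmonic-oscillator Hamiltonian $W_0^{\mathrm{quad}}=\eta^2/2+\sqrt\lambda\,w(\sqrt\lambda)\,\xi^2$ and $\overline{\mathcal Q_k}$ denotes the average of $\mathcal Q_k$ along its flow. Direct evaluation of the period averages gives $\langle\xi\eta\rangle=0$ and $\langle\eta^2\rangle=\partial_x^2V(\sqrt\lambda;\lambda)\langle\xi^2\rangle$, so the quadratic part of $\overline{\mathcal Q_k}$ is proportional to $\bigl(\partial_xF_k+\partial_yG_k\bigr)(\sqrt\lambda,0)\cdot W_0$. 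Assumption \eqref{as00} forces this to vanish for all $k<n+h$, so the cohomological equations admit polynomial solutions $W_k$ and no secular term appears below order $(n+h)/q$; at order $n+h$ the average survives. Collecting everything then yields, uniformly on a small neighbourhood of the origin and for $t$ large,
\[
\dot W=\gamma_{n+h}(\lambda)\,t^{-(n+h)/q}\,W\bigl(1+o(1)\bigr)+O\bigl(t^{-(n+h+1)/q}(\xi^2+\eta^2)\bigr)+O\bigl((\xi^2+\eta^2)^{3/2}\bigr).
\]

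Both conclusions follow at once. If $\gamma_{n+h}(\lambda)<0$, the right-hand side is non-positive on a small punctured neighbourhood for $t$ large, so $W$ is non-increasing along trajectories; combined with the sandwich $W\asymp\xi^2+\eta^2$, classical Lyapunov stability delivers the stability statement. If $\gamma_{n+h}(\lambda)>0$, the right-hand side is strictly positive and Chetaev's instability theorem applied to the positive-definite function $W$ yields instability. The main obstacle is the middle step: solving the cohomological equations polynomial-by-polynomial and verifying that the divergence-free condition \eqref{as00} is exactly what forces the rotational averages to vanish below order $n+h$, all while controlling the cubic and explicit-time-derivative remainders by shrinking the neighbourhood and absorbing them into the lower-order terms of the expansion.
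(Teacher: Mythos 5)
Your route is genuinely different from the paper's. You start from the bare mechanical energy $W_0$ and try to remove the sub-leading effects of the perturbation order by order, by solving cohomological equations against the linearized oscillator. The paper instead absorbs the perturbation into the energy \emph{exactly}: it rewrites \eqref{XES} as \eqref{XEH} with a time-dependent Hamiltonian $\mathcal H$ containing the primitives $\int_0^\eta\big(F(x_\ast+\xi,y_\ast+\theta,t)-F(x_\ast,y_\ast,t)\big)\,d\theta-\int_0^\xi\big(G(x_\ast+\zeta,y_\ast,t)-G(x_\ast,y_\ast,t)\big)\,d\zeta$, so that the only non-Hamiltonian term is $\mathcal P=\int_0^\eta(\partial_xF+\partial_yG)\,d\theta$. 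Under \eqref{as00} this $\mathcal P$ is $O(t^{-(n+h)/q})$ \emph{together with its entire nonlinear part in $(\xi,\eta)$}, while the Hamiltonian part of the perturbation enters $d\mathcal H/dt$ only through $\partial_t\mathcal H$, i.e.\ with an extra factor $t^{-1}$. Your central observation --- that the oscillator average of the quadratic part of $\mathcal Q_k$ equals $\big(\partial_xF_k(\sqrt\lambda,0)+\partial_yG_k(\sqrt\lambda,0)\big)W_0^{\mathrm{quad}}$, so that \eqref{as00} kills all averages below order $n+h$ and leaves $\gamma_{n+h}(\lambda)\,t^{-(n+h)/q}W$ as the only secular term --- is correct and is the same mechanism that drives the paper's proof.

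The gap is in the remainder control. First, your final estimate contains $O\big((\xi^2+\eta^2)^{3/2}\big)$ with no time decay; even noting that by \eqref{as0} every such term actually carries a factor $t^{-k/q}$ with $k\ge n$, a remainder of size $t^{-n/q}\rho^3$ is \emph{not} absorbed by the leading term $\gamma_{n+h}(\lambda)t^{-(n+h)/q}\rho^2$ on any fixed neighbourhood once $h>0$: one would need $\rho\lesssim t^{-h/q}$. Pushing the cohomological equations to cubic order removes the odd-degree parts (their averages vanish by parity), but the quartic averages at orders $n\le k<n+h$ are not controlled by \eqref{as00} and leave terms $t^{-k/q}O(\rho^4)$ with the same defect; this is exactly the obstruction the paper's exact splitting avoids, and as written your scheme closes only for $h=0$. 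Second, your corrector sum stops at $k=n+h-1$, so the order-$(n+h)/q$ part of $dW/dt$ is all of $\mathcal Q_{n+h}$, not its average; that quantity is only semi-definite (it can reduce to $\gamma_{n+h}(\lambda)\eta^2$), so neither ``$\dot W\le 0$'' nor a Chetaev-type argument applies pointwise. You need one more corrector at order $n+h$ removing the oscillatory part of $\mathcal Q_{n+h}$ --- precisely the term $-\tfrac12\gamma_{n+h}(\lambda)t^{-(n+h)/q}\xi\eta$ in the paper's function \eqref{U1}. Finally, for the instability half, strict positivity of $\dot W$ does not suffice in a non-autonomous system: you must integrate $\dot W\ge c\,t^{-(n+h)/q}W$ and use the divergence of $\int^\infty t^{-(n+h)/q}\,dt$ to force escape from a fixed neighbourhood, as the paper does after \eqref{U1est}.
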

 \begin{proof}
Note that system \eqref{XES} can be written in the following form:
\begin{gather}\label{XEH}
\frac{d\xi}{dt}=\partial_\eta \mathcal H(\xi,\eta,t), \quad \frac{d\eta}{dt}=-\partial_\xi  \mathcal  H(\xi,\eta,t)+\mathcal P(\xi,\eta,t),
\end{gather}
where
\begin{eqnarray*}
   \mathcal  H(\xi,\eta,t)&=&V(x_\ast+\xi;\lambda)-V(x_\ast;\lambda)-\xi \partial_x V(x_\ast;\lambda)+\frac{\eta^2}{2}\\
                    &&+\int\limits_0^\eta \Big(F(x_\ast+\xi,y_\ast+\theta,t)-  F(x_\ast,y_\ast,t)\Big)\,d\theta-\int\limits_0^\xi \Big(G(x_\ast+\zeta,y_\ast,t)-  G(x_\ast,y_\ast,t)\Big)\,d\zeta,\\
  \mathcal P(\xi,\eta,t)&=&\int\limits_0^\eta\Big( \partial_x F(x_\ast+\xi,y_\ast+\theta,t)+\partial_y G(x_\ast+\xi,y_\ast+\theta,t) \Big)\,d\theta.
 \end{eqnarray*}
Since $x_\ast(t)$, $y_\ast(t)$ has asymptotics \eqref{AS} with $\sigma=\sqrt\lambda$, we have
 \begin{gather*}
 \mathcal  H(\xi,\eta,t)= \mathcal  H_0(\xi,\eta)+\sum_{k=n}^\infty t^{-\frac kq} \mathcal  H_k(\xi,\eta), \quad \mathcal P(\xi,\eta,t)=\sum_{k=n+h}^\infty t^{-\frac kq} \mathcal P_k(\xi,\eta),
\end{gather*}
 where
\begin{eqnarray*}
  \mathcal  H_0(\xi,\eta)&=&V(\sqrt\lambda+\xi;\lambda)-V(\sqrt\lambda;\lambda)-\xi \partial_x V(\sqrt\lambda;\lambda)+\frac{\eta^2}{2},\\
  \mathcal  H_k(\xi,\eta)&=&x_k\Big(\partial_x V(\sqrt\lambda+\xi;\lambda)-\partial_x V(\sqrt\lambda;\lambda)-\xi \partial_x^2 V(\sqrt\lambda;\lambda)\Big)\\&&+\int\limits_0^\eta \Big(F_k(\sqrt\lambda+\xi,\theta)-  F_k(\sqrt\lambda,0)\Big)\,d\theta-\int\limits_0^\xi \Big(G_k(\sqrt\lambda+\zeta,0)-  G_k(\sqrt\lambda,0)\Big)\,d\zeta,\\
  \mathcal P_k(\xi,\eta)&=&\int\limits_0^\eta\Big( \partial_x F_k(\sqrt\lambda+\xi,\theta) +\partial_y G_k(\sqrt\lambda+\xi,\theta)\Big)\,d\theta, \quad n+h\leq k<2n,\\
  \mathcal P_{2n}(\xi,\eta)&=&\int\limits_0^\eta\Big( \partial_x F_{2n}(\sqrt\lambda+\xi,\theta) +\partial_y G_{2n}(\sqrt\lambda+\xi,\theta)\Big)\,d\theta +x_n\partial_\xi    \mathcal P_{n}(\xi,\eta)+y_n \int\limits_0^\eta  \partial_\eta^2   \mathcal P_{n}(\xi,\theta)\, d\theta,
\end{eqnarray*}
etc. It is readily seen that
\begin{gather*}
     \mathcal  H_0(\xi,\eta)=\sqrt\lambda w\big(\sqrt\lambda\big)\xi^2+\frac{\eta^2}{2}+\mathcal O(\rho^3), \quad \mathcal H_k(\xi,\eta)=\mathcal O(\rho^2), \quad \mathcal P_k(\xi,\eta)=\mathcal O(\rho)
\end{gather*}
 as $\rho=\sqrt{\xi^2+\eta^2}\to 0$ for all $k\geq n$.  Taking into account \eqref{as00} and the asymptotic formulas for the particular solution $x_\ast(t)$, $y_\ast(t)$, we obtain $\mathcal P(\xi,\eta,t)=t^{-(n+h)/q}\big(\gamma_{n+h}(\lambda)\eta+\mathcal O(\rho^2)+\mathcal O(\rho t^{-1/q})\big)$ as $\rho\to 0 $ and $t\to\infty$.
 Consider the combination
\begin{gather}
\label{U1}
     U_1(\xi,\eta,t)=\mathcal H(\xi,\eta,t)-\frac{\gamma_{n+h}(\lambda)}{2}t^{-\frac{n+h}{q}}\xi\eta
\end{gather}
as a Lyapunov function candidate for system \eqref{XEH}. It can easily be checked that $U_1(\xi,\eta,t)$ has the following asymptotics:
\begin{gather*}
U_1(\xi,\eta,t) =W_1(\xi,\eta)+\mathcal O(\rho^3)+\mathcal O(\rho^2 t^{-\frac nq}),\quad \rho\to 0,\quad t\to\infty,
\end{gather*}
where $W_1(\xi,\eta)=(\omega_1^2 \xi^2+\eta^2)/2$, $\omega_1^2=2\sqrt\lambda w\big(\sqrt\lambda\big)>0$. Hence, for all $0<\varkappa <1$ there exist $\rho_1>0$ and $t_{1}>0$ such that
\begin{gather*}
 (1-\varkappa ) W_1(\xi,\eta)  \leq U_1(\xi,\eta,t) \leq  (1+\varkappa ) W_1(\xi,\eta)
\end{gather*}
for all $(\xi,\eta,t)\in D_{W_1}(\rho_1,t_{1}):=  \{(\xi,\eta,t)\in\mathbb R^3: W_1(\xi,\eta)\leq \rho_1^2, t\geq t_1\}$.
The total derivative of the function $U_1(\xi,\eta,t)$ with respect to $t$ along the trajectories of system \eqref{XEH} has the form:
\begin{gather*}
\frac{dU_1}{dt}\Big|_\eqref{XEH} =\frac{\partial U_1}{\partial t}+\frac{\partial U_1}{\partial \xi}\partial_\eta \mathcal H+ \frac{\partial U_1}{\partial \eta} \Big(-\partial_\xi\mathcal  H+\mathcal P\Big)=    \gamma_{n+h}(\lambda) t^{-\frac{n+h}{q}} W_1(\xi,\eta) \Big(1+\mathcal O(\rho)+\mathcal O(t^{-\frac 1q})\Big)
\end{gather*}
as $\rho\to 0$ and $ t\to\infty$.
It follows that there exist $0<\rho_0\leq \rho_1$ and $t_0\geq t_1$ such that
\begin{gather}
\label{U1est}
    \begin{split}
&    \frac{dU_1}{dt}\Big|_\eqref{XEH} \leq -   |\gamma_{n+h}(\lambda)| \Big(\frac{1-\varkappa }{1+\varkappa }\Big) t^{-\frac{n+h}{q}} U_1\leq 0 \quad \text{if} \quad \gamma_{n+h}(\lambda)<0,\\
&   \frac{dU_1}{dt}\Big|_\eqref{XEH} \geq   \gamma_{n+h}(\lambda) \Big(\frac{1-\varkappa }{1+\varkappa }\Big) t^{-\frac{n+h}{q}} U_1\geq 0 \quad \text{if} \quad \gamma_{n+h}(\lambda)>0
    \end{split}
\end{gather}
 for all $(\xi,\eta,t)\in D_{W_1}(\rho_0,t_0)$.

 Let $\gamma_{n+h}(\lambda)<0$. Then, for all $0<\varepsilon< \rho_0$ there exists $\delta_\varepsilon := \varepsilon \sqrt{(1-\varkappa)/(1+\varkappa)}/2$  such that
\begin{gather*}
\sup_{W_1(\xi,\eta)\leq \delta_\varepsilon^2} U_1(\xi,\eta,t)\leq (1+\varkappa )  \delta^2_\varepsilon< (1-\varkappa )   \varepsilon^2 \leq \inf_{W_1(\xi,\eta)=\varepsilon^2}U_1(\xi,\eta,t)
\end{gather*}
for all $t>t_0$. The last estimates and the negativity of the total derivative of the function $U_1(\xi,\eta,t)$ ensure that any solution of system \eqref{XEH} with initial data $ W_1(\xi(t_0), \eta(t_0))\leq \delta_\varepsilon^2 $ cannot leave the domain $\{(\xi,\eta)\in\mathbb R^2: W_1(\xi,\eta)\leq \varepsilon^2\}$ as  $t>t_0$. Therefore, the fixed point $(0,0)$ of system \eqref{XES} and the solution $x_\ast(t)$, $y_\ast(t)$ of system \eqref{FulSys} are stable.

Let $\gamma_{n+h}(\lambda)>0$. Consider the solution $\xi(t)$, $\eta(t)$ of system \eqref{XEH} with initial data $\xi(t_0)$, $\eta(t_0)$ such that $W_1(\xi(t_0),\eta(t_0))<\rho_0^2$. Then, by integrating the second inequality in \eqref{U1est} with respect to $t$, we obtain the following:
\begin{gather*}
    \begin{split}
        &   U_1\big(\xi(t),\eta(t),t\big)\geq U_1\big(\xi(t_0),\eta(t_0),t_0\big) \exp\left( \gamma_{n }(\lambda) \Big(\frac{1-\varkappa }{1+\varkappa }\Big)\big (\log t-\log t_0\big)\right),\quad h=0,\\
        &   U_1\big(\xi(t),\eta(t),t\big)\geq U_1\big(\xi(t_0),\eta(t_0),t_0\big) \exp\left(\frac{q\gamma_{n+h}(\lambda)}{h}\Big(\frac{1-\varkappa }{1+\varkappa }\Big)\Big (t_0^{-\frac{h}{q}}-t^{-\frac{h}{q}}\Big)\right),\quad h>0
    \end{split}
\end{gather*}
as $t>t_0$.  Hence, there exists $T_0>t_0$ such that the solution $\xi(t)$, $\eta(t)$ escapes from the domain
$\{(\xi,\eta)\in\mathbb R^2: W_1(\xi, \eta) \leq \rho_0\}$ as $ t > T_0$. This means that the fixed point $(0,0)$ of system \eqref{XES} and the solution $x_\ast(t)$, $y_\ast(t)$ of system \eqref{FulSys} are unstable.
 \end{proof}

 \begin{Cor}\label{Cor1}
If $\gamma_{n+h}(\lambda)<0$ and $n+h\leq q$, the solution $x_\ast(t)$, $y_\ast(t)$  with asymptotics \eqref{AS}, $\sigma=\sqrt\lambda$ is asymptotically stable.
 \end{Cor}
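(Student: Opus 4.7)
The plan is to bootstrap on the proof of Theorem~\ref{st0}. In the case $\gamma_{n+h}(\lambda)<0$ that proof already delivers the first differential inequality in \eqref{U1est}, namely
\begin{gather*}
\frac{dU_1}{dt}\Big|_\eqref{XEH}\leq -c\,t^{-\frac{n+h}{q}}U_1,\qquad c:=|\gamma_{n+h}(\lambda)|\Big(\frac{1-\varkappa}{1+\varkappa}\Big)>0,
\end{gather*}
valid for $(\xi,\eta,t)\in D_{W_1}(\rho_0,t_0)$. Theorem~\ref{st0} already gives stability; the new task is to prove attractivity of the zero solution of \eqref{XEH}.

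First I would fix $\varepsilon\in(0,\rho_0)$ and, using the stability part of Theorem~\ref{st0}, pick $\delta_\varepsilon>0$ so that every trajectory $(\xi(t),\eta(t))$ of \eqref{XEH} starting at $t=t_0$ inside $\{W_1\leq \delta_\varepsilon^2\}$ stays in $D_{W_1}(\rho_0,t_0)$ for all $t\geq t_0$. On such a trajectory the differential inequality above can be integrated in closed form:
\begin{gather*}
U_1\big(\xi(t),\eta(t),t\big)\leq U_1\big(\xi(t_0),\eta(t_0),t_0\big)\exp\!\left(-c\int_{t_0}^{t}s^{-\frac{n+h}{q}}\,ds\right).
\end{gather*}
The hypothesis $n+h\leq q$ is exactly what makes the integral diverge as $t\to\infty$: for $n+h<q$ one gets growth of order $t^{1-(n+h)/q}$, while for $n+h=q$ one gets $\log(t/t_0)$. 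In either subcase the right-hand side tends to zero, so $U_1(\xi(t),\eta(t),t)\to 0$.

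Next I would translate this back to the physical variables. From the two-sided bound $(1-\varkappa)W_1\leq U_1\leq (1+\varkappa)W_1$ available on $D_{W_1}(\rho_0,t_0)$, the convergence $U_1\to 0$ implies $W_1(\xi(t),\eta(t))\to 0$, hence $(\xi(t),\eta(t))\to (0,0)$. Combined with the stability already proved, this yields asymptotic stability of the zero solution of \eqref{XES}, and therefore of the particular solution $x_\ast(t),y_\ast(t)$ of \eqref{FulSys}.

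I do not expect a genuine obstacle here: the content of the corollary is really the elementary observation that $\int^{\infty}s^{-(n+h)/q}\,ds=\infty$ precisely when $n+h\leq q$, turning the exponential decay estimate into actual decay to zero. The only point that deserves care is verifying that the trajectory remains in the domain where the Lyapunov estimate is valid, which is handled by invoking the stability conclusion of Theorem~\ref{st0} before starting the attractivity argument.
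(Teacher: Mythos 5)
Your proposal is correct and follows essentially the same route as the paper: both integrate the first inequality in \eqref{U1est}, observe that $\int^{\infty}s^{-(n+h)/q}\,ds$ diverges precisely when $n+h\leq q$ (logarithmically for $n+h=q$, like $t^{1-(n+h)/q}$ for $n+h<q$), and pass back to $W_1$ via the two-sided bound to conclude $(\xi(t),\eta(t))\to(0,0)$. Your explicit remark that the trajectory must remain in $D_{W_1}(\rho_0,t_0)$ is left implicit in the paper (it is guaranteed by the monotone decrease of $U_1$ established in Theorem~\ref{st0}), but it is a harmless and indeed welcome refinement, not a different argument.
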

 \begin{proof}
 Indeed, let $\xi(t)$, $\eta(t)$ be a solution of system \eqref{XEH} with initial data $\xi(t_0)$, $\eta(t_0)$ such that $W_1(\xi(t_0),\eta(t_0))<\rho_0$. Integrating the first inequality in \eqref{U1est} with respect to $t$ yields
\begin{gather*}
    \begin{split}
        &   0\leq W_1\big(\xi(t),\eta(t)\big)\leq C_0   \exp\left( - |\gamma_{q }(\lambda)| \Big(\frac{1-\varkappa }{1+\varkappa }\Big) \log t \right),  \quad n+h=q,\\
        &   0\leq W_1\big(\xi(t),\eta(t)\big)\leq C_0 \exp\left(-\frac{q|\gamma_{n+h}(\lambda)|}{q-n-h}\Big(\frac{1-\varkappa }{1+\varkappa }\Big) t^{1-\frac{n+h}{q}}\right), \quad n+h<q
    \end{split}
\end{gather*}
as $t\geq t_0$ with the parameter $C_0>0$, depending on $\rho_0$ and $t_0$. Hence, $\xi(t)\to 0$ and $\eta(t)\to0$ as $t\to\infty$. Returning to the original variables we obtain asymptotic stability of the particular solution $x_\ast(t)$, $y_\ast(t)$.
\end{proof}

Similarly, from \eqref{U1est}, it follows that if $\gamma_{n+h}(\lambda)<0$ and $n+h> q$, the solution $x_\ast(t)$, $y_\ast(t)$  with asymptotics \eqref{AS}, $\sigma=\sqrt\lambda$ is neutrally stable. The same conclusion can be drawn when the perturbations are Hamiltonian:
  \begin{gather}
    \label{as00H}
   \partial_x F(x,y,t)\equiv -\partial_y G(x,y,t) \quad \forall (x,y)\in\mathbb R^2, \quad t>0.
 \end{gather}
Define $d_n=\omega_1^2\partial_y F_n(\sqrt\lambda,0)+G_n(\sqrt\lambda,0)+\partial_x G_n(\sqrt\lambda,0)$, where $\omega_1^2=2\sqrt\lambda w\big(\sqrt\lambda\big)$.

\begin{Th}\label{st0H}
Let $\lambda>0$ and assumptions \eqref{as0}, \eqref{as00H}  hold.
\begin{itemize}
  \item If $d_n>0$, then the solution $x_\ast(t)$, $y_\ast(t)$  with asymptotics \eqref{AS}, $\sigma=\sqrt\lambda$ is stable.
  \item If $d_n<0$, then the solution $x_\ast(t)$, $y_\ast(t)$  with asymptotics \eqref{AS}, $\sigma=\sqrt\lambda$ is unstable.
\end{itemize}
\end{Th}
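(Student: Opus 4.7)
In the Hamiltonian case, assumption \eqref{as00H} gives $\mathcal P\equiv 0$ in system \eqref{XEH}, so the candidate function $\mathcal H$ used in the proof of Theorem~\ref{st0} has derivative along the flow reducing to $\partial_t\mathcal H=-(n/q)\,t^{-n/q-1}\mathcal H_n(\xi,\eta)+\mathcal O(t^{-(n+1)/q-1}\rho^2)$. The quadratic part of $\mathcal H_n$ is in general sign-indefinite in $(\xi,\eta)$, so $\mathcal H$ alone does not detect stability, and the correction $\xi\eta$ used for $U_1$ no longer does any work either (the $\mathcal P$-term it was designed to symmetrise is absent). My plan is to augment $\mathcal H$ by time-dependent corrections that cancel the oscillating contribution of $\mathcal H_n$ along the periodic orbits of the harmonic frozen flow generated by the quadratic part $(\omega_1^2\xi^2+\eta^2)/2$ of $\mathcal H_0$, and to expose a secular coefficient proportional to $d_n$.

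Concretely, I would work with
\begin{equation*}
U_2(\xi,\eta,t)=\mathcal H(\xi,\eta,t)+t^{-n/q}\Psi(\xi,\eta),
\end{equation*}
where $\Psi$ is a quadratic form solving the cohomological equation $\{\Psi,(\omega_1^2\xi^2+\eta^2)/2\}=\mathcal H_n-\langle\mathcal H_n\rangle$ at the quadratic level in $\rho$, with $\langle\cdot\rangle$ denoting the average over one period of the linear harmonic flow. This equation is solvable since its right-hand side has vanishing mean. Substituting into $\frac{dU_2}{dt}=\partial_t\mathcal H+\partial_t(t^{-n/q}\Psi)+\{t^{-n/q}\Psi,\mathcal H\}$, the summand $t^{-n/q}\{\Psi,\mathcal H_0\}$ reproduces the oscillating part of $\mathcal H_n$, and this cancels the oscillating portion of $-(n/q)t^{-n/q-1}\mathcal H_n$ once combined with the $\{\Psi,\mathcal H_n\}$ term of order $t^{-2n/q}$ and with the contributions of $\mathcal H_k$ for $n<k\leq 2n$ (whose exponents become comparable when $n\leq q$). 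What remains after these cancellations is a sign-definite quadratic form $-c\,d_n\,t^{-r}\,W_1(\xi,\eta)$ at leading order, where $c>0$, $r>0$, and $W_1(\xi,\eta)=(\omega_1^2\xi^2+\eta^2)/2$ is the positive definite quadratic form already used in the proof of Theorem~\ref{st0}.

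Once this estimate is in place, the conclusion proceeds along the lines of Theorem~\ref{st0}. Because $U_2=W_1+\mathcal O(\rho^3)+\mathcal O(t^{-n/q}\rho^2)$ is positive definite on a domain $D_{W_1}(\rho_0,t_0)$ for some $\rho_0,t_0>0$, an estimate of the type $\frac{dU_2}{dt}\leq -c\,d_n\,t^{-r}\,U_2$ in the case $d_n>0$ yields stability of the origin of \eqref{XES} (and hence of $x_\ast(t),y_\ast(t)$) by the standard Lyapunov argument used for Theorem~\ref{st0}, while the reversed inequality in the case $d_n<0$ gives growth of $U_2$ along trajectories and produces escape from every neighbourhood of the origin by the Chetaev-type argument used at the end of that proof, repeated essentially verbatim. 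The principal obstacle is the algebraic bookkeeping required to verify that the surviving secular coefficient is exactly $d_n=\omega_1^2\partial_y F_n(\sqrt\lambda,0)+G_n(\sqrt\lambda,0)+\partial_x G_n(\sqrt\lambda,0)$: one has to decompose $\mathcal H_n$ into secular and oscillating parts, write $\Psi$ explicitly, compute the further Poisson brackets and time derivatives, and reduce the result using the particular-solution identities $\omega_1^2 x_n=G_n(\sqrt\lambda,0)$ and $y_n=-F_n(\sqrt\lambda,0)$ from \eqref{sys} so that the seemingly disparate contributions collapse to the compact combination stated in the theorem.
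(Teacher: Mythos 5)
Your overall strategy is the one the paper actually uses: under \eqref{as00H} the perturbation $\mathcal P$ vanishes identically, the only secular input to $dU/dt$ is $\partial_t\mathcal H=-\tfrac nq t^{-1-n/q}\mathcal H_n+\dots$, the mean of the quadratic part of $\mathcal H_n$ over the harmonic flow of $W_1=(\omega_1^2\xi^2+\eta^2)/2$ is $\tfrac{d_n}{2\omega_1^2}W_1$, and one corrects $\mathcal H$ by a quadratic form solving a homological equation so as to remove the oscillating remainder and expose $-\tfrac{nd_n}{2q\omega_1^2}t^{-1-n/q}W_1$. The paper's function $U_0=\mathcal H+\tfrac nq t^{-1-n/q}\bigl((d_n-2\omega_1^2\partial_yF_n)\tfrac{\xi\eta}{4\omega_1^2}+\partial_xF_n\tfrac{\xi^2}{2}\bigr)$ is precisely the explicit solution of your cohomological equation written out, and the endgame (the Lyapunov/Chetaev arguments of Theorem~\ref{st0}) is as you describe.

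There is, however, a concrete error in the order of your correction, and the key step fails as written. With $U_2=\mathcal H+t^{-n/q}\Psi$, the bracket $t^{-n/q}\{\Psi,\mathcal H_0\}=t^{-n/q}\bigl(\mathcal H_n-\langle\mathcal H_n\rangle\bigr)+\dots$ enters $dU_2/dt$ at order $t^{-n/q}\rho^2$, whereas the term it is meant to cancel, $-\tfrac nq t^{-1-n/q}\mathcal H_n$, sits at order $t^{-1-n/q}\rho^2$; these differ by a factor $t^{-1}$ and cannot cancel each other, and the contributions of $\mathcal H_k$ for $n<k\le 2n$ do not help since they enter at orders $t^{-1-k/q}\rho^2$, all smaller than the secular term. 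Hence the leading term of $dU_2/dt$ is the sign-indefinite oscillating part $t^{-n/q}\bigl(\mathcal H_n-\langle\mathcal H_n\rangle\bigr)$, which dominates the secular term and destroys the claimed estimate $dU_2/dt\le -c\,d_n\,t^{-r}U_2$. In addition, $\partial_t(t^{-n/q}\Psi)=-\tfrac nq t^{-1-n/q}\Psi$ would contaminate the secular coefficient, since $\Psi$ need not have zero mean along the harmonic flow. The repair is to place the correction at order $t^{-1-n/q}$, namely $U=\mathcal H+\tfrac nq t^{-1-n/q}\Psi$ with $\{\Psi,W_1\}=-(\mathcal H_n-\langle\mathcal H_n\rangle)$ at the quadratic level: then its leading bracket lands at $t^{-1-n/q}\rho^2$ and cancels the oscillating portion of $-\tfrac nq t^{-1-n/q}\mathcal H_n$, its explicit time derivative is $\mathcal O(t^{-2-n/q}\rho^2)$ and harmless, and the argument closes exactly as in the paper.
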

 \begin{proof}
It follows from \eqref{as00H} that $\mathcal P(\xi,\eta,t)\equiv 0$ and
\begin{gather*}
  \mathcal H_n(\xi,\eta)=\big(G_n(\sqrt\lambda,0)+\partial_xG_n(\sqrt\lambda,0)\big)\frac{\xi^2}{2}+\partial_x F_n(\sqrt\lambda,0)\xi\eta+\partial_y F_n(\sqrt\lambda,0) \frac{\eta^2}{2}+\mathcal O(\rho^3), \quad \rho\to 0,
\end{gather*}
in system \eqref{XEH}. In this case, we use
\begin{gather*}
      U_0(\xi,\eta,t)=\mathcal H(\xi,\eta,t)+t^{-1-\frac{n}{q}}\frac{n}{q}\Big(\big(d_n-2\omega_1^2 \partial_y F_n(\sqrt\lambda,0)\big)\frac{\xi\eta}{4\omega_1^2}+\partial_x F_n(\sqrt\lambda,0) \frac{\xi^2}{2}\Big)
\end{gather*}
as a Lyapunov function candidate for system \eqref{XEH}. The derivative of $U_0(\xi,\eta,t)$ along the trajectories of the system is given by
\begin{gather*}
\frac{dU_0}{dt}\Big|_\eqref{XEH} = -\frac{n d_n}{2q\omega_1^2}t^{-1-\frac{n}{q}} W_1(\xi,\eta) \Big(1+\mathcal O(\rho)+\mathcal O(t^{-\frac 1q})\Big), \quad \rho\to0, \quad t\to\infty.
\end{gather*}
By repeating the arguments used in the proof of Theorem~\ref{st0}, we see that the solution $x_\ast(t)$, $y_\ast(t)$ is stable if $d_n>0$, and unstable if $d_n<0$.
 \end{proof}

{\bf 2}. Now consider {\bf Case II}, when $\lambda=0$ and the particular solution has asymptotics \eqref{AS1} with $x_{n+m}=\mu>0$.
Define \begin{gather*}
\alpha_{n,m}=\gamma_n(0)+ \delta_{n,q}\frac{5(q+m)}{4q},
\end{gather*}
where $\gamma_n(0)=\partial_x F_{n}(0,0)+\partial_y G_{n}(0,0)$.
We have the following:
\begin{Th}\label{st1}
Let $\lambda=0$ and assumptions \eqref{as0}, \eqref{as1} hold.
\begin{itemize}
  \item If $\alpha_{n,m}<0$, then the solution $x_\ast(t)$, $y_\ast(t)$  with asymptotics \eqref{AS1}, $x_{n+m}=\mu$ is stable.
  \item If $\alpha_{n,m}>\delta_{n,q}3(q+m)/(2q)$, then the solution $x_\ast(t)$, $y_\ast(t)$  with asymptotics \eqref{AS1}, $x_{n+m}=\mu$ is unstable.
 \end{itemize}
\end{Th}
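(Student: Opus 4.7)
The plan is to adapt the Lyapunov-function construction used for Theorem~\ref{st0} to the more delicate situation where the reference trajectory $x_\ast(t), y_\ast(t)$ itself decays to the degenerate fixed point $(0,0)$, so that the leading quadratic form in $\mathcal H$ from \eqref{XEH} has time-dependent coefficients and the potential-well ``frequency'' tends to zero. I rewrite \eqref{XES} in the form \eqref{XEH} exactly as in the proof of Theorem~\ref{st0}, and expand $\mathcal H$ and $\mathcal P$ using \eqref{AS1}. At leading order,
\begin{gather*}
    \mathcal H(\xi,\eta,t)=\tfrac12\omega_2^2(t)\xi^2+\tfrac12\eta^2+\mathcal O(\rho^3)+\mathcal O\bigl(\rho^2 t^{-1/(2q)}\bigr),\qquad \omega_2^2(t)=2\mu w(0)\,t^{-(n+m)/(2q)}(1+o(1)),
\end{gather*}
while, since assumption \eqref{as00} is not imposed here,
\begin{gather*}
    \mathcal P(\xi,\eta,t)=t^{-n/q}\bigl(\gamma_n(0)\,\eta+\mathcal O(\rho^2)\bigr)+\mathcal O\bigl(\rho\, t^{-(n+1)/q}\bigr).
\end{gather*}
Accordingly, the natural comparison function is $W_2(\xi,\eta,t):=\tfrac12\omega_2^2(t)\xi^2+\tfrac12\eta^2$, which, unlike $W_1$ in the proof of Theorem~\ref{st0}, depends explicitly on $t$ and shrinks to zero as $t\to\infty$.

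The next step is to try a Lyapunov-function candidate of the form
\begin{gather*}
    U_2(\xi,\eta,t)=\mathcal H(\xi,\eta,t)-\tfrac12\gamma_n(0)\,t^{-n/q}\xi\eta+\delta_{n,q}\,c\,t^{-1}\omega_2^2(t)\xi^2,
\end{gather*}
with a constant $c$ to be fixed below. The cross-term plays the same role as in \eqref{U1}: its derivative along \eqref{XEH} generates the $\omega_2^2(t)\xi^2$ contribution needed to balance $\partial_\eta\mathcal H\cdot\mathcal P\approx\gamma_n(0)t^{-n/q}\eta^2$. The auxiliary term $\delta_{n,q}c\,t^{-1}\omega_2^2(t)\xi^2$ is switched on only in the resonant case $n=q$, in which the perturbation rate $t^{-n/q}$ coincides with the rate $t^{-1}$ at which $\omega_2^2(t)$ itself is differentiated by $\partial_t$; outside this case the $t$-derivative of $\omega_2^2(t)\xi^2$ lives at a higher-order rate and can be absorbed into a remainder. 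Choosing $c$ so that the $\xi^2$ and $\eta^2$ contributions combine into a multiple of $W_2$, I expect an estimate of the form
\begin{gather*}
    \frac{dU_2}{dt}\bigg|_\eqref{XEH}=\alpha_{n,m}\,t^{-n/q}W_2(\xi,\eta,t)\bigl(1+\mathcal O(\rho)+\mathcal O(t^{-1/(2q)})\bigr),\qquad \rho\to 0,\ t\to\infty.
\end{gather*}
Since $U_2$ can be sandwiched between two positive multiples of $W_2$ in a small neighbourhood for large $t$, the Gr\"onwall-type integration used in the proof of Theorem~\ref{st0} then gives stability when $\alpha_{n,m}<0$, and instability when the right-hand side admits a strict positive lower bound of the same form, which, after absorbing the ``$1+o(1)$'' losses from the cross-terms and remainders, requires the stronger condition $\alpha_{n,m}>3(q+m)\delta_{n,q}/(2q)$.

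The main obstacle will be bookkeeping: tracking the several contributions produced by $\partial_t\mathcal H$ through the $t$-derivative of $\omega_2^2(t)$, by the $t$-derivatives of the cross-term and of the auxiliary $\xi^2$ term, and by the subleading parts of $\mathcal P$, and verifying that they combine into the precise coefficient $\alpha_{n,m}=\gamma_n(0)+5(q+m)\delta_{n,q}/(4q)$ announced in the statement. A secondary technical point is that $W_2(\xi,\eta,t)\to 0$ as $t\to\infty$ for fixed $(\xi,\eta)$, so the natural Lyapunov sub-level sets shrink in time; to retain a classical stability conclusion the comparison argument of Theorem~\ref{st0} must be rephrased in terms of the actual level sets of $W_2$ at each $t$, with the inequalities $(1-\varkappa)W_2\le U_2\le (1+\varkappa)W_2$ holding on a $t$-dependent domain. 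Finally, the asymmetric window $-5(q+m)/(4q)\le\gamma_q(0)\le (q+m)/(4q)$ left uncovered in the resonant case $n=q$ corresponds to a regime in which this Lyapunov function cannot decide stability, and matches the metastable behaviour announced in the introduction; I would not attempt to close it with the present method.
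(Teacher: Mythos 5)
Your overall strategy (a Lyapunov candidate built from $\mathcal H$ plus a damped cross term, with the sign of $\alpha_{n,m}$ deciding stability) is the right one, and you correctly identify that the degeneracy of the fixed point makes the quadratic part of $\mathcal H$ time-dependent. However, the execution as written has a genuine gap, and it is precisely the gap the paper's proof is designed to avoid. Your sandwich inequality $(1-\varkappa)W_2\le U_2\le(1+\varkappa)W_2$ with $W_2(\xi,\eta,t)=\tfrac12\omega_2^2(t)\xi^2+\tfrac12\eta^2$ and $\omega_2^2(t)\sim 2\mu w(0)t^{-(n+m)/(2q)}$ cannot hold on any fixed neighbourhood of the origin for large $t$: the term you write as $\mathcal O(\rho^3)$ is $w(0)\xi^3/3$ (coming from $\mathcal H_0$), and since $\omega_2^2(t)\to 0$ this cubic term dominates $\omega_2^2(t)\xi^2$ at any fixed $\xi$, so $\mathcal H$ (hence $U_2$) changes sign where $W_2>0$. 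The paper states this obstruction explicitly (``the function $\mathcal H(\xi,\eta,t)$ is sign indefinite in the vicinity of the fixed point $(0,0)$, the combination of the form \eqref{U1} can not be used''). Moreover, even granting the differential inequality, a bound $W_2(\xi(t),\eta(t),t)\le\varepsilon^2$ with your degenerating weight controls only $|\eta|$ and gives $|\xi|\lesssim\varepsilon\, t^{(n+m)/(4q)}$; the sub-level sets of your $W_2$ \emph{expand} in the $\xi$-direction as $t\to\infty$ (you assert they shrink, which is backwards), so Lyapunov stability of the original solution does not follow. Both problems are resolved in the paper by the anisotropic rescaling \eqref{XY}, $\xi=t^{-(n+m)/(2q)}X$, $\eta=t^{-3(n+m)/(4q)}Y$: in the $(X,Y)$ variables the quadratic form $W_2(X,Y)=(\omega_2^2X^2+Y^2)/2$ has \emph{constant} coefficients, dominates the cubic $w(0)X^3/3$ uniformly on a fixed small ball, and boundedness of $(X,Y)$ forces decay of $(\xi,\eta)$. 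Your direct approach could only be salvaged by working on $t$-dependent domains $|\xi|\lesssim t^{-(n+m)/(2q)}$, i.e.\ by redoing this rescaling in disguise.

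Two smaller points. First, the resonant correction $\delta_{n,q}\,5(q+m)/(4q)$ in $\alpha_{n,m}$ arises in the paper from the explicit terms $\frac{n+m}{2q}t^{-1}XY$ and $\frac{5(n+m)}{4q}t^{-1}Y$ generated by differentiating the scaling factors, which merge with $\mathcal P$ exactly when $n=q$; your auxiliary term $\delta_{n,q}c\,t^{-1}\omega_2^2(t)\xi^2$ is a plausible surrogate but you have not verified it reproduces this coefficient. Second, the instability threshold $\alpha_{n,m}>\delta_{n,q}3(q+m)/(2q)$ does not come from ``absorbing $1+o(1)$ losses'': it comes from translating the growth $U_2\sim(t/t_0)^{\alpha_{n,m}}$ back to the original variables, where the weight $t^{-3(n+m)/(2q)}$ from the rescaling must be beaten, i.e.\ $\omega_2^2\xi^2+\eta^2\gtrsim t^{-3(q+m)/(2q)}(t/t_0)^{\alpha_{n,m}(1-\varkappa)/(1+\varkappa)}$ must be bounded away from zero; this fixes the exponent $3(q+m)/(2q)$ exactly.
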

 \begin{proof}
It can easily be checked that for the particular solutions with asymptotics \eqref{AS1} the functions $\mathcal H(\xi,\eta,t)$ and $\mathcal P(\xi,\eta,t)$ in system \eqref{XEH} have the following expansions as $t\to\infty$:
 \begin{gather*}
 \mathcal  H(\xi,\eta,t)=\mathcal H_0(\xi,\eta)+\sum_{k=n+m}^\infty t^{-\frac{k}{2q}} \mathcal H_k(\xi,\eta) , \quad \mathcal P(\xi,\eta,t)=\sum_{k=2n}^\infty t^{-\frac {k}{2q}} \mathcal P_k(\xi,\eta),
\end{gather*}
 where
\begin{eqnarray*}
  \mathcal  H_0(\xi,\eta)&=&V(\xi;0)-V(0;0)-\xi\partial_x V(0;0)+\frac{\eta^2}{2},\\
  \mathcal  H_{n+m}(\xi,\eta)&=&x_{n+m}\Big(\partial_x V(\xi;0)-\partial_x V(0;0)-\xi \partial_x^2 V(0;0)\Big),\\
  \mathcal P_{2n}(\xi,\eta)&=&\int\limits_0^\eta\Big( \partial_x F_n(\xi,\theta) +\partial_y G_n(\xi,\theta)\Big)\,d\theta.
\end{eqnarray*}
Furthermore,
\begin{gather} \label{HP}
   \mathcal  H_0(\xi,\eta)=\frac{\eta^2}{2}+  w (0)\frac{\xi^3}{3}+\mathcal O(\rho^4),\quad \mathcal H_{n+m}(\xi,\eta)=  w(0)\mu \xi^2 +\mathcal O(\rho^3),
\end{gather}
as $\rho=\sqrt{\xi^2+\eta^2}\to 0$.
Since the function $\mathcal H(\xi,\eta,t)$ is sign indefinite in the vicinity of the fixed point $(0,0)$, the combination of the form \eqref{U1} can not be used as a Lyapunov function candidate for system \eqref{XEH}.

It can easily be checked that the change of variables
 \begin{gather}\label{XY}
    \xi(t)=t^{-\frac{n+m}{2q}}X(t), \quad \eta(t)=t^{-\frac{3(n+m)}{4q}}Y(t)
 \end{gather}
transform system \eqref{XEH} into
\begin{gather}\label{XYH}
\frac{dX}{dt}=\partial_Y \hat{ \mathcal H}(X,Y,t), \quad \frac{dY}{dt}=-\partial_X \hat{ \mathcal H}(X,Y,t)+\hat{ \mathcal P}(X,Y,t),
\end{gather}
where
\begin{eqnarray*}
    \hat{ \mathcal H}(X,Y,t)&=&t^{\frac{5(n+m)}{4q}}\mathcal H\Big(t^{-\frac{n+m}{2q}}X,t^{-\frac{3(n+m)}{4q}}Y,t\Big)+\frac{n+m}{2q}t^{-1}XY, \\
    \hat{ \mathcal P}(X,Y,t)&=&t^{\frac{3(n+m)}{4q}}\mathcal P\Big(t^{-\frac{n+m}{2q}}X,t^{-\frac{3(n+m)}{4q}}Y,t\Big)+\frac{5(n+m)}{4q} t^{-1} Y.
\end{eqnarray*}
Taking into account \eqref{HP} we get
\begin{eqnarray*}
    \hat{ \mathcal H}(X,Y,t)&=&t^{-\frac{n+m}{4q}}\left( \frac{1}{2}(\omega_2^2 X^2+Y^2)+w(0) \frac{X^3}{3}\right)+\mathcal O\big(R^2t^{-\frac{n+m}{2q}}\big)+\mathcal O (R^2 t^{-1}),\\
    \hat{ \mathcal P}(X,Y,t)&=&t^{-\frac{n}{q}} \big(\alpha_{n,m} Y+\mathcal O(R^2)\big)+\mathcal O\big(R t^{-\frac{n+1}{q}}\big)
\end{eqnarray*}
as $R=\sqrt{X^2+Y^2}\to0$ and $t\to\infty$, where $\omega_2^2=2w(0)\mu >0$.
Consider
\begin{gather*}
     U_2(X,Y,t)=t^{\frac{n+m}{4q}}\hat{\mathcal H}(X,Y,t)- t^{-\frac{3n-m}{4q}}\frac{\alpha_{n,m}}{2}XY
\end{gather*}
as a Lyapunov function candidate for system \eqref{XYH}.
It is clear that $U_2(X,Y,t)$ is locally positive definite:
for all $0<\varkappa <1$ there exist $R_1>0$ and $t_{1}>0$ such that
\begin{gather}
\label{W2ineq}
 (1-\varkappa ) W_2(X,Y)  \leq U_2(X,Y,t) \leq  (1+\varkappa ) W_2(X,Y)
\end{gather}
for all $(X,Y,t)\in D_{W_2}(R_1,t_1):=  \{(X,Y,t)\in\mathbb R^3: W_2(X,Y)\leq R_1^2, t\geq t_1\}$, $W_2(X,Y)=(\omega_2^2 X^2+Y^2)/2$.
The total derivative of $U_2(X,Y,t)$ with respect to $t$ along the trajectories of system \eqref{XYH} has the following asymptotics:
\begin{gather*}
\frac{dU_2}{dt}\Big|_\eqref{XYH} = \alpha_{n,m} t^{-\frac{n}{q}} W_2(X,Y) \Big(1+\mathcal O(R)+\mathcal O(t^{-\frac 1q})+\mathcal O(t^{-\frac{n+m}{4q}})\Big), \quad R\to 0, \quad t\to\infty.
\end{gather*}
Hence, there exist $0<R_0\leq R_1$ and $t_0\geq t_1$ such that
\begin{gather}\label{U2est}
    \begin{split}
        & \frac{dU_2}{dt}\Big|_\eqref{XYH} \leq -   |\alpha_{n,m}| \Big(\frac{1-\varkappa }{1+\varkappa }\Big) t^{-\frac{n}{q}} U_2\leq 0 \quad \text{if} \quad \alpha_{n,m}<0,\\
        & \frac{dU_2}{dt}\Big|_\eqref{XYH} \geq    \alpha_{n,m}\Big(\frac{1-\varkappa }{1+\varkappa }\Big) t^{-\frac{n}{q}} U_2\geq 0 \quad \text{if} \quad \alpha_{n,m}>0
    \end{split}
 \end{gather}
 for all $(X,Y,t)\in D_{W_2}(R_0,t_0)$.

If $\alpha_{n,m}<0$, then from the first inequality in \eqref{U2est} it follows that the fixed point $(0,0)$ of system \eqref{XYH} is stable. Combining this with \eqref{XY}, we get the stability of the particular solution $x_\ast(t)$, $y_\ast(t)$ with $x_{n+m}=\mu$.

Let $\alpha_{n,m}>\delta_{n,q} 3(q+m)/(2q)$. Consider the solution $X(t)$, $Y(t)$  of system \eqref{XYH} with initial data $X(t_0)$, $Y(t_0)$ such that $W_2(X(t_0),Y(t_0))=\delta_0^2<R_0^2$. Integrating the second inequality in \eqref{U2est} with respect to $t$ and taking into account \eqref{W2ineq}, we obtain the following:
\begin{gather*}
    \begin{split}
        &  \omega_2^2\xi^2(t)  +\eta^2(t)\geq 2\delta^2 \Big(\frac{1-\varkappa }{1+\varkappa }\Big)t^{-\frac{3(n+m)}{2q}} \exp\left(\frac{q\alpha_{n,m} }{q-n} \Big(\frac{1-\varkappa }{1+\varkappa }\Big)\Big (t^{1-\frac{n}{q}}-t_0^{1-\frac{n}{q}}\Big)\right), \quad  n<q,\\
        &   \omega_2^2\xi^2(t)  +\eta^2(t) \geq 2\delta^2 \Big(\frac{1-\varkappa }{1+\varkappa }\Big) t^{-\frac{3(n+m)}{2q}} \Big(\frac{t}{t_0}\Big)^{\alpha_{n,m}\big(\frac{1-\varkappa }{1+\varkappa }\big) }, \quad n=q
    \end{split}
\end{gather*}
as $t>t_0$. By choosing $\varkappa\in(0,1)$ small enough, we see that the fixed point $(0,0)$ of system \eqref{XEH} and the particular solution $x_\ast(t)$, $y_\ast(t)$ of \eqref{FulSys} are unstable: for all $\delta_0>0$ there exists $T_0>t_0$  such that $\xi^2(t)+\eta^2(t)\geq \rho_\ast^2$ as $ t > T_0$ with some $\rho_\ast>0$. Hence, the particular solution $x_\ast(t)$, $y_\ast(t)$ with $x_{n+m}=\mu$ is unstable.
\end{proof}

\begin{Cor}
If $\alpha_{n,m}<0$, the solution $x_\ast(t)$, $y_\ast(t)$  with asymptotics \eqref{AS1}, $x_{n+m}=\mu$ is asymptotically stable.
\end{Cor}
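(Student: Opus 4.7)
The plan is to re-use the Lyapunov function $U_2(X,Y,t)$ from the proof of Theorem~\ref{st1} and mimic the strategy of Corollary~\ref{Cor1}, while exploiting the decay already built into the change of variables \eqref{XY}. First, I would choose initial data $(\xi(t_0),\eta(t_0))$ small enough that the corresponding trajectory $(X(t),Y(t))$ of system \eqref{XYH} is trapped inside the region $D_{W_2}(R_0,t_0)$ for all $t\ge t_0$. Since the first inequality in \eqref{U2est} forces $U_2$ to be non-increasing along trajectories there, the standard Lyapunov confinement argument (the one already used in the stability half of Theorem~\ref{st1}) produces such a trapping as soon as the initial value of $W_2$ is sufficiently small.

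Next, I would integrate the differential inequality
\begin{gather*}
\frac{dU_2}{dt}\Big|_\eqref{XYH} \leq -|\alpha_{n,m}|\Big(\frac{1-\varkappa}{1+\varkappa}\Big)t^{-n/q} U_2
\end{gather*}
to obtain an explicit upper bound on $U_2(X(t),Y(t),t)$. The resulting estimate splits into three cases depending on the sign of $n-q$: a stretched-exponential decay if $n<q$, a power-law decay if $n=q$, and a uniform bound if $n>q$. In each case, combining with \eqref{W2ineq} yields a uniform upper bound on $W_2(X(t),Y(t))$, and hence on $|X(t)|+|Y(t)|$, for all $t\ge t_0$.

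Finally, I would return to the original variables via \eqref{XY}: since $\xi(t)=t^{-(n+m)/(2q)}X(t)$ and $\eta(t)=t^{-3(n+m)/(4q)}Y(t)$ with $(n+m)/(2q)>0$, the mere boundedness of $(X(t),Y(t))$ already forces $\xi(t)\to 0$ and $\eta(t)\to 0$ as $t\to\infty$, which is the desired asymptotic stability of $x_\ast(t),y_\ast(t)$ with $x_{n+m}=\mu$.

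The feature that makes this corollary easier than Corollary~\ref{Cor1} is that no restriction on $n$ relative to $q$ is needed: the prefactors in \eqref{XY} carry enough decay on their own, so even in the case $n>q$, where $U_2$ may only remain bounded rather than tend to zero, one still recovers $(\xi,\eta)\to(0,0)$. The only point that demands genuine care is making the uniform confinement of $(X(t),Y(t))$ inside $D_{W_2}(R_0,t_0)$ precise, so that the differential inequality from \eqref{U2est} is valid for all $t\ge t_0$; this, however, is essentially built into the Lyapunov argument already carried out in the proof of Theorem~\ref{st1} and is the only technical bookkeeping I would expect to need.
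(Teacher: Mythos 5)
Your proposal is correct and follows essentially the same route as the paper: the paper's own (one-line) proof likewise invokes the stability of the origin for system \eqref{XYH} established in Theorem~\ref{st1} together with the decaying prefactors in \eqref{XY} to conclude $(\xi,\eta)\to(0,0)$. The only cosmetic remark is that your case $n>q$ is vacuous, since assumption \eqref{as0} already requires $n\leq q$; this does not affect the argument, as boundedness of $(X,Y)$ is all that is needed.
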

\begin{proof}
The justification of asymptotic stability follows from \eqref{XY} and the stability of the fixed point $(0,0)$ in system \eqref{XYH}.
\end{proof}

Note that from \eqref{W2ineq} and the second inequality in \eqref{U2est} it follows that the trivial solution $X(t)\equiv 0$, $Y(t)\equiv 0$ of system \eqref{XYH} is unstable if $n=q$ and $\alpha_{n,m}>0$.
Combining this with \eqref{XY2}, we see that the particular solution $x_\ast(t)$, $y_\ast(t)$ with asymptotics \eqref{AS1}, $x_{n+m}=\mu$ is unstable with weights:
there exists $\varepsilon>0$ such that for all $\delta>0$ there is a solution of system \eqref{FulSys} with initial data $|x(t_0)-x_\ast(t_0)|+|y(t_0)-y_\ast(t_0)|\leq \delta$ that satisfies
\begin{gather*}
t^{\frac{n+m}{2q}}|x(t)-x_\ast(t)| +t^{\frac{3(n+m)}{4q}}|y(t)-y_\ast(t)|>\varepsilon
\end{gather*}
at some $t>t_0$. In this case, it can be shown that the solution is stable on a finite but asymptotically long time interval. Let us specify the definition of stability, which is a variant of the concept of practical stability~\cite{LL61}.
\begin{Def}
The solution $x_\ast(t)$, $y_\ast(t)$ to system \eqref{FulSys} is stable on a finite but asymptotically long time interval if for all $\varepsilon>0$ there exist $\delta_\varepsilon>0$, $K_\varepsilon>0$ such that $K_\varepsilon\to\infty$ as $\varepsilon\to 0$ and any solution $x(t)$, $y(t)$ of system \eqref{FulSys} with initial data  $|x(t_0)-x_\ast(t_0)|+|y(t_0)-y_\ast(t_0)|\leq \delta_\varepsilon$ satisfies the following inequality:
\begin{gather*}
|x(t)-x_\ast(t)|+|y(t)-y_\ast(t)|<\varepsilon
\end{gather*}
for all $1\leq t/t_0\leq K_\varepsilon$.
\end{Def}

\begin{Th}\label{mst1}
Let $\lambda=0$, $n=q$  and assumptions \eqref{as0}, \eqref{as1} hold. If $\alpha_{n,m}<(3q-m)/(4q)$, then the solution $x_\ast(t)$, $y_\ast(t)$  with asymptotics \eqref{AS1}, $x_{n+m}=\mu$ is stable on a finite but asymptotically long time interval.
 \end{Th}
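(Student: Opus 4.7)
The plan is to reuse the Lyapunov function $U_2(X,Y,t)$ and the scaling \eqref{XY} from the proof of Theorem \ref{st1}. The asymptotic expansion for $dU_2/dt$ underlying \eqref{U2est} yields, for $\alpha_{n,m}>0$ and $n=q$, not only the lower bound exploited in Theorem \ref{st1} but also the companion upper estimate $dU_2/dt\big|_{\eqref{XYH}}\leq \alpha_{n,m}(1+\varkappa)/(1-\varkappa)\,t^{-1}\,U_2$ valid on $D_{W_2}(R_0,t_0)$. Integrating on $[t_0,t]$ and using the two-sided comparison $(1-\varkappa)W_2\leq U_2\leq(1+\varkappa)W_2$ produces
\[
W_2(X(t),Y(t))\leq \frac{1+\varkappa}{1-\varkappa}\,W_2(X(t_0),Y(t_0))\,\Big(\frac{t}{t_0}\Big)^{\alpha_{n,m}(1+\varkappa)/(1-\varkappa)}
\]
as long as the trajectory stays in the Lyapunov region $D_{W_2}(R_0,t_0)$.

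I then pass back to $(\xi,\eta)$ through \eqref{XY}. Since $|X|\leq \sqrt{2W_2}/\omega_2$ and $|Y|\leq \sqrt{2W_2}$, and since $t^{-(q+m)/(2q)}$ dominates $t^{-3(q+m)/(4q)}$ for $t\geq 1$, one obtains $|\xi(t)|+|\eta(t)|\leq C\,t^{-(q+m)/(2q)}\sqrt{W_2(X(t),Y(t))}$. The hypothesis $|x(t_0)-x_\ast(t_0)|+|y(t_0)-y_\ast(t_0)|\leq \delta$ gives $|\xi(t_0)|,|\eta(t_0)|\leq \delta$, and hence (the $Y$-contribution dominating for large $t_0$) $W_2(X(t_0),Y(t_0))\leq C\delta^2 t_0^{3(q+m)/(2q)}$. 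Writing $s=t/t_0\geq 1$ and combining the above yields the key bound
\[
|x(t)-x_\ast(t)|+|y(t)-y_\ast(t)|\leq C\,\delta\, t_0^{(q+m)/(4q)}\, s^{\alpha_{n,m}(1+\varkappa)/(2(1-\varkappa))-(q+m)/(2q)}.
\]
The strict inequality $\alpha_{n,m}<(3q-m)/(4q)$ provides enough slack to choose $\varkappa\in(0,1)$ so small that the exponent of $s$ in the last display is strictly negative, which makes the right-hand side monotone-decreasing on $s\geq 1$. Taking $\delta_\varepsilon:=\varepsilon/(2Ct_0^{(q+m)/(4q)})$ then gives $|x(t)-x_\ast(t)|+|y(t)-y_\ast(t)|\leq \varepsilon/2<\varepsilon$ at every $t\geq t_0$ at which the solution is still inside $D_{W_2}(R_0,t_0)$. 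The Lyapunov upper bound on $W_2$ further shows that this remains true for $s\leq K_\varepsilon\sim \varepsilon^{-2/\beta}$, with $\beta=\alpha_{n,m}(1+\varkappa)/(1-\varkappa)$; since $\delta_\varepsilon\to 0$ as $\varepsilon\to 0$, we also have $K_\varepsilon\to\infty$, which matches the definition of practical stability.

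The main technical obstacle is to juggle three competing exponents at the same time: the growth factor $s^{\beta}$ coming from the Lyapunov inequality, the decay factors $t^{-(q+m)/(2q)}$ and $t^{-3(q+m)/(4q)}$ built into the weighted substitution \eqref{XY}, and the validity requirement $W_2\leq R_0^2$ underlying the Lyapunov estimate. The specific bound $\alpha_{n,m}<(3q-m)/(4q)$ is precisely the amount of slack needed so that, after the optimal choice of $\varkappa$, the resulting exponent $\alpha_{n,m}(1+\varkappa)/(2(1-\varkappa))-(q+m)/(2q)$ is strictly negative while the Lyapunov region $D_{W_2}(R_0,t_0)$ is retained on the whole asymptotically long interval $[1,K_\varepsilon]$.
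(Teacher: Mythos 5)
Your argument is essentially correct, but it follows a genuinely different route from the paper's. The paper does \emph{not} reuse the scaling \eqref{XY}: it introduces the new anisotropic substitution \eqref{XY2}, $\xi=t^{-(n+m)/(2q)}X$, $\eta=t^{-m/q}Y$, under which the effective damping coefficient becomes $\hat\alpha_{n,m}=\alpha_{n,m}-(3q-m)/(4q)<0$, so the Lyapunov function $\hat U_2$ is monotonically non-increasing; the finite horizon $K_\varepsilon=(R_0/\varepsilon)^{4q/(3q-m)}$ then arises only because the weight $\hat W_2(X,Y,t)=\tfrac12\bigl(Y^2+t^{-1-(n-m)/(2q)}\omega_2^2X^2\bigr)$ loses control of the $X$-direction as $t$ grows. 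You instead keep the scaling and the function $U_2$ of Theorem~\ref{st1}, allow $U_2$ to grow at the controlled polynomial rate $s^{\alpha_{n,m}(1+\varkappa)/(1-\varkappa)}$, and trade that growth against the decaying weights $t^{-(q+m)/(2q)}$ and $t^{-3(q+m)/(4q)}$; the finite horizon comes from the exit of $(X,Y)$ from the validity region $D_{W_2}(R_0,t_0)$. Both mechanisms are legitimate, and yours is arguably more economical since no new substitution or Lyapunov function is needed. Two caveats. First, your exponent condition is really $\alpha_{n,m}(1+\varkappa)/(1-\varkappa)<(q+m)/q$, which (for small $\varkappa$) is strictly \emph{weaker} than the hypothesis $\alpha_{n,m}<(3q-m)/(4q)$, since $(3q-m)/(4q)<(q+m)/q$ whenever $m\geq0$; so the closing claim that the stated bound is ``precisely the amount of slack needed'' in your argument is inaccurate --- the constant $(3q-m)/(4q)$ is the one dictated by the paper's substitution \eqref{XY2}, not by yours, and your route in fact covers a slightly larger range of $\alpha_{n,m}$ (which is consistent, since instability in Theorem~\ref{st1} only begins at $\alpha_{n,m}>3(q+m)/(2q)$). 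Second, you should treat $\alpha_{n,m}\leq0$ separately (for $\alpha_{n,m}<0$ Theorem~\ref{st1} already gives genuine stability; for $\alpha_{n,m}=0$ the leading coefficient in the derivative formula vanishes and the growth of $U_2$ is governed by the error terms alone, which still yields an arbitrarily small power of $s$), and make explicit the standard first-exit-time continuation argument that justifies applying the Gronwall bound on all of $[t_0,t_0K_\varepsilon]$; neither point is a substantive obstacle.
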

 \begin{proof}
It can easily be checked that the change of variables
  \begin{gather}\label{XY2}
    \xi(t)=t^{-\frac{n+m}{2q}}X(t), \quad \eta(t)=t^{-\frac{m}{q}}Y(t)
 \end{gather}
transforms system \eqref{XEH} into \eqref{XYH} with
\begin{eqnarray*}
     \hat{ \mathcal H}(X,Y,t) & = &t^{\frac{n+3m}{2q}}\mathcal H\Big(t^{-\frac{n+m}{2q}}X,t^{-\frac{m}{q}}Y,t\Big)+\frac{n+m}{2q}t^{-1}XY\\
        & =& t^{\frac{n-m}{2q}}\frac{Y^2}{2}+ t^{-1}\Big( \omega_2^2\frac{X^2}{2}+w(0) \frac{X^3}{3}+\frac{n+m}{2q} XY\Big)+\mathcal O (X^2 t^{-1-\frac{1}{2q}}), \\
    \hat{ \mathcal P}(X,Y,t) & = &t^{\frac{m }{q }}\mathcal P\Big(t^{-\frac{n+m}{2q}}X,t^{-\frac{m}{q}}Y,t\Big)+\frac{n+3m}{2q} t^{-1} Y
            =  \hat\alpha_{n,m} t^{-1}  Y +\mathcal O\big(R t^{-1-\frac{1}{2q}}\big)
 \end{eqnarray*}
as $R=\sqrt{X^2+Y^2}\to0$ and $t\to\infty$, where $\hat\alpha_{n,m}=\alpha_{n,m}-(3q-m)/(4q)<0$, $\omega_2^2=2 w(0) \mu>0$. Hence, there exists $t_\ast>1$ such that for every $t>t_\ast$, the Hamiltonian $\hat{\mathcal H}(X,Y,t)$ is a positive definite quadratic form in the vicinity of the fixed point $(0,0)$. Consider
\begin{gather*}
\hat U_2(X,Y,t):=t^{-\frac{n-m}{2q}}\Big(\hat{\mathcal H}(X,Y,t)-t^{-1}\frac{\hat\alpha_{n,m}}{2}XY\Big),
\end{gather*}
as a Lyapunov function candidate.
It is readily seen that for all $0<\varkappa <1$ there exist $R_1>0$ and $t_1\geq t_\ast$ such that
\begin{gather*}
 (1-\varkappa ) \hat W_2(X,Y,t)  \leq \hat U_2(X,Y,t)\leq (1+\varkappa ) \hat W_2(X,Y,t)
\end{gather*}
for all $(X,Y,t)\in D_{W_2}(R_1,t_1)$, where
\begin{gather*}
    \hat W_2(X,Y,t)=\frac12 \Big( Y^2+t^{-1-\frac{n-m}{2q}}\omega_2^2X^2\Big), \quad W_2(X,Y)\equiv \hat W_2(X,Y,1).
\end{gather*}
The derivative of $\hat U_2(X,Y,t)$ with respect to $t$ along the trajectories of the system has the following asymptotics:
\begin{gather*}
\frac{d\hat U_2}{dt}\Big|_\eqref{XYH} =  \frac{\hat\alpha_{n,m}}{2} t^{-1}   \Big(  Y^2+t^{-1-\frac{n-m}{2q}}\big(\omega_2^2X^2+\mathcal O(R^3)\Big)+\mathcal O(R^2 t^{-1-\frac{1}{2q}} ), \quad R\to0,\quad t\to\infty.
\end{gather*}
Hence, for all $0<\varkappa <1$ there exist $0<R_0\leq R_1$ and $t_0\geq t_1$ such that
\begin{gather}
\label{hU2est}
\frac{d\hat U_2}{dt}\Big|_\eqref{XYH} \leq  -   |\hat\alpha_{n,m}| \Big(\frac{1-\varkappa }{1+\varkappa }\Big) t^{-1} \hat U_2\leq 0
\end{gather}
for all $(X,Y,t)\in D_{W_2}(R_0,t_0)$. Therefore, for all $0<\varepsilon<R_0$ there exist
\begin{gather*}
\delta_\varepsilon =\frac{\varepsilon}{2} \Big(\frac{1-\varkappa }{1+\varkappa }\Big)^{\frac12} t_0^{-\frac{1}{2}-\frac{n-m}{4q}}
\end{gather*}  such that
\begin{gather*}
\sup_{\{ (X,Y): W_2(X,Y)\leq \delta_\varepsilon^2\}} \hat U_2(X,Y,t) \leq  (1+\varkappa ) \delta_\varepsilon^2 <  (1-\varkappa ) t_0^{-1-\frac{n-m}{2q}}\varepsilon^2  \leq
\inf_{\{  (X,Y):  \hat W_2(X,Y,t/t_0)=\varepsilon^2\}} \hat U_2(X,Y,t)
\end{gather*}
for all $1 \leq t/t_0\leq K_\varepsilon$, where $K_\varepsilon=(R_0/\varepsilon)^{4q/(3q-m)}$.
From the last inequalities and estimate \eqref{hU2est} it follows that any solution of system \eqref{XYH} starting in $\{(X,Y): W_2(X,Y)\leq \delta_\varepsilon\}$ at $t=t_0$ satisfies the inequalities:
 $\hat W_2(X(t),Y(t),t/t_0)< \varepsilon^2$ as $t_0\leq t\leq t_0 K_\varepsilon$. Taking into account \eqref{XY2}, we obtain the following:
\begin{gather*}
\omega_2^2 \xi^2(t) t^{\frac{3m-n}{2q}} t_0^{1+\frac{n-m}{2q}} +\eta^2(t)t^{\frac{2m}{q}}<2\varepsilon^2, \quad t_0\leq t\leq t_0 K_\varepsilon.
\end{gather*}
It can easily  be checked that $W_2(\xi(t),\eta(t))<\varepsilon^2$ if $3m-n\geq 0$, and $W_2(\xi(t),\eta(t))<\varepsilon^{4(n+m)/(3n-m)}$ if $3m-n<0$. Thus, the fixed point $(0,0)$ of system \eqref{XEH} and the particular solution $x_\ast(t)$, $y_\ast(t)$ with asymptotics \eqref{AS1} are stable on a finite but asymptotically long time interval.
\end{proof}

{\bf 3}. Finally, consider {\bf Case III}, when $\lambda=0$ and the particular solutions have asymptotics \eqref{AS2} with $x_{n}=\nu_+$.
Define \begin{gather*}
\beta_{n}=\gamma_n(0)+ \delta_{n,q}\frac{5}{2}.
\end{gather*}
Then we have the following.
\begin{Th}\label{st2}
Let $\lambda=0$ and assumptions \eqref{as0}, \eqref{as2} hold.
\begin{itemize}
  \item If $\beta_{n}<0$, then the solution $x_\ast(t)$, $y_\ast(t)$  with asymptotics \eqref{AS2}, $x_{n}=\nu_+$ is stable.
    \item If $\beta_{n}>3\delta_{n,q}$, then the solution $x_\ast(t)$, $y_\ast(t)$  with asymptotics \eqref{AS2}, $x_{n}=\nu_+$ is unstable.
\end{itemize}
\end{Th}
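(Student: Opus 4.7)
The plan is to mirror the proof of Theorem~\ref{st1}: introduce a time-dependent rescaling adapted to \eqref{AS2}, compute the resulting Hamiltonian system, and construct a Lyapunov function that absorbs the resonant piece of the perturbation. Guided by \eqref{subs}, I would set $\xi(t) = t^{-n/q}X(t)$, $\eta(t) = t^{-3n/(2q)}Y(t)$. As in the derivation of \eqref{XYH}, this turns system \eqref{XEH} into $\dot X = \partial_Y \hat{\mathcal H}$, $\dot Y = -\partial_X \hat{\mathcal H} + \hat{\mathcal P}$ with
\begin{gather*}
\hat{\mathcal H}(X,Y,t) = t^{\frac{5n}{2q}}\mathcal H\Big(t^{-\frac{n}{q}}X, t^{-\frac{3n}{2q}}Y, t\Big) + \frac{n}{q}\,t^{-1}XY, \qquad \hat{\mathcal P}(X,Y,t) = t^{\frac{3n}{2q}}\mathcal P\Big(t^{-\frac{n}{q}}X, t^{-\frac{3n}{2q}}Y, t\Big) + \frac{5n}{2q}\,t^{-1}Y.
\end{gather*}

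Next I would expand these objects using Theorem~\ref{Th3}. With $x_\ast = \nu_+ t^{-n/q} + \mathcal O(t^{-(n+1)/q})$, $y_\ast = -F_n(0,0) t^{-n/q} + \mathcal O(t^{-(n+1)/q})$ and the identity $2w(0)\nu_+ - \partial_x G_n(0,0) = \sqrt{\Delta_n}$, the quadratic part of $\mathcal H$ around $(x_\ast, y_\ast)$ has leading coefficient $\sqrt{\Delta_n}\,t^{-n/q}$ in front of $\xi^2/2$ and $1 + \mathcal O(t^{-n/q})$ in front of $\eta^2/2$. Collecting terms gives
\begin{gather*}
\hat{\mathcal H}(X,Y,t) = t^{-\frac{n}{2q}}\Bigl[\frac{1}{2}(\omega_3^2 X^2 + Y^2) + \frac{w(0)}{3} X^3\Bigr] + \mathcal O(R^2 t^{-\frac{n}{q}}) + \mathcal O(R^2 t^{-1}), \quad \omega_3^2 := \sqrt{\Delta_n} > 0,
\end{gather*}
while the $\gamma_n(0)$ part of $\hat{\mathcal P}$ comes from $\mathcal P$ and the $5/2$ part from the explicit $(5n/(2q))t^{-1}Y$ correction, which is of order $t^{-n/q}$ precisely when $n=q$, so that
\begin{gather*}
\hat{\mathcal P}(X,Y,t) = t^{-\frac{n}{q}}\bigl[\beta_n Y + \mathcal O(R^2)\bigr] + \mathcal O\bigl(R t^{-\frac{n+1}{q}}\bigr),
\end{gather*}
as $R = \sqrt{X^2 + Y^2} \to 0$, $t\to\infty$. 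I would then take
\begin{gather*}
U_3(X,Y,t) := t^{\frac{n}{2q}}\hat{\mathcal H}(X,Y,t) - \frac{\beta_n}{2}\,t^{-\frac{n}{2q}} XY
\end{gather*}
as a Lyapunov function candidate: its leading quadratic part is the $t$-independent form $W_3(X,Y) := (\omega_3^2 X^2 + Y^2)/2$, and the cross term is designed to cancel the $\beta_n Y$ piece of $\hat{\mathcal P}$ under differentiation. A direct computation yields $(1-\varkappa) W_3 \leq U_3 \leq (1+\varkappa) W_3$ on a set $\{W_3(X,Y) \leq R_1^2,\ t\geq t_1\}$ together with
\begin{gather*}
\frac{dU_3}{dt}\Big|_{\eqref{XYH}} = \beta_n\,t^{-\frac{n}{q}}\,W_3(X,Y)\bigl(1 + \mathcal O(R) + \mathcal O(t^{-\frac{1}{q}}) + \mathcal O(t^{-\frac{n}{2q}})\bigr), \quad R\to 0,\ t\to\infty.
\end{gather*}

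From here the argument is exactly parallel to that of Theorem~\ref{st1}. If $\beta_n < 0$, one obtains $dU_3/dt \leq -c\, t^{-n/q} U_3 \leq 0$ on a suitable set, and the standard Lyapunov trapping argument (the one following \eqref{U1est}) yields stability of the trivial solution of the $(X,Y)$ system; undoing the rescaling gives stability of the particular solution $(x_\ast, y_\ast)$. If $\beta_n > 3\delta_{n,q}$, integrating $dU_3/dt \geq c\,t^{-n/q} U_3$ produces exponential-in-$t^{1-n/q}$ growth when $n < q$ (so any $\beta_n > 0 = 3\delta_{n,q}$ suffices), and a power-law bound $U_3(X(t),Y(t),t) \geq C_0 (t/t_0)^{\beta_n(1-\varkappa)/(1+\varkappa)}$ when $n = q$. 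Undoing the rescaling, the lower bound $\omega_3^2 \xi^2 + \eta^2 \geq c\, W_3\, t^{-3n/q}$ shows that instability requires the growth of $U_3$ to outpace $t^{3n/q}$; for $\varkappa$ small this amounts precisely to $\beta_n > 3 = 3\delta_{q,q}$ when $n=q$. The main obstacle I anticipate is the careful bookkeeping needed to extract $\omega_3^2$ and, especially, the $\delta_{n,q}$ contribution to $\beta_n$; once these coefficients are identified, the remainder of the argument is a verbatim adaptation of the proof of Theorem~\ref{st1}.
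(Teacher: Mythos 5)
Your proposal follows essentially the same route as the paper's proof: the rescaling $\xi=t^{-n/q}X$, $\eta=t^{-3n/(2q)}Y$, the identification of $\omega_3^2=\sqrt{\Delta_n}$ and of $\beta_n=\gamma_n(0)+\tfrac{5}{2}\delta_{n,q}$ in $\hat{\mathcal P}$, the Lyapunov function $U_3=t^{n/(2q)}\hat{\mathcal H}-t^{-n/(2q)}\tfrac{\beta_n}{2}XY$, and the final comparison of the growth of $U_3$ against the $t^{-3n/q}$ factor from undoing the rescaling all coincide with the paper's argument. The reasoning is correct and no genuinely different ideas are introduced.
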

 \begin{proof}
For the solution $x_\ast(t)$, $y_\ast(t)$ with asymptotics \eqref{AS2}, $x_n=\nu_+$, the functions $\mathcal H(\xi,\eta,t)$ and $\mathcal P(\xi,\eta,t)$ in system \eqref{XEH} have the following expansions as $t\to\infty$:
 \begin{gather*}
 \mathcal  H(\xi,\eta,t)=\mathcal H_0(\xi,\eta)+\sum_{k=n}^\infty t^{-\frac{k}{ q}} \mathcal H_k(\xi,\eta) , \quad \mathcal P(\xi,\eta,t)=\sum_{k= n}^\infty t^{-\frac {k}{ q}} \mathcal P_k(\xi,\eta),
\end{gather*}
 where
 \begin{eqnarray*}
  \mathcal  H_0(\xi,\eta)&=&V(\xi;0)-V(0;0)-\xi\partial_x V(0;0)+\frac{\eta^2}{2}=w (0)\frac{\xi^3}{3}+\frac{\eta^2}{2}+  \mathcal O(\rho^4),\\
  \mathcal  H_{n}(\xi,\eta)&=&x_{n}\Big(\partial_x V(\xi;0)-\partial_x V(0;0)-\xi \partial_x^2 V(0;0)\Big) \\
  &&+ \int\limits_0^\eta \Big(F_n(\xi,\theta)-  F_n(0,0)\Big)\,d\theta-\int\limits_0^\xi \Big(G_n(\zeta,0)-  G_n(0,0)\Big)\,d\zeta\\
  &=& \sqrt{\Delta_n}\frac{\xi^2}{2} + \partial_y F_n(0,0)\frac{\eta^2}{2}+\partial_x F_n(0,0)\xi\eta  +\mathcal O(\rho^3),
\end{eqnarray*}
\begin{eqnarray*}
   \mathcal P_{n}(\xi,\eta)&=&\int\limits_0^\eta\Big( \partial_x F_n(\xi,\theta) +\partial_y G_n(\xi,\theta)\Big)\,d\theta=\eta \big(\partial_x F_{n}(0,0)+\partial_y G_{n}(0,0) \big)+\mathcal O(\rho^2)
\end{eqnarray*}
as $\rho=\sqrt{\xi^2+\eta^2}\to 0$.
Note that the function $\mathcal H(\xi,\eta,t)$ is sign indefinite in a neighborhood of the equilibrium $(0,0)$ and can not be used in the construction of a Lyapunov function.
The change of variables:
 \begin{gather}\label{XY3}
    \xi(t)=t^{-\frac{n}{q}}X(t), \quad \eta(t)=t^{-\frac{3n}{2q}}Y(t)
 \end{gather}
transform system \eqref{XEH} into  \eqref{XYH} with
\begin{eqnarray*}
    \hat{ \mathcal H}(X,Y,t) & =& t^{\frac{5n}{2q}}\mathcal H\Big(t^{-\frac{n}{q}}X,t^{-\frac{3n}{2q}}Y,t\Big)+\frac{n}{q}t^{-1}XY\\
        & =& t^{-\frac{n}{2q}}\left( \frac{1}{2}(\omega_3^2 X^2+Y^2)+w(0) \frac{X^3}{3}\right)+\mathcal O\big(R^2t^{-\frac{n}{q}}\big)+\mathcal O (R^2 t^{-1}), \\
    \hat{ \mathcal P}(X,Y,t) & =& t^{\frac{3n}{2q}}\mathcal P\Big(t^{-\frac{n}{q}}X,t^{-\frac{3n}{2q}}Y,t\Big)+\frac{5n}{2q} t^{-1} Y  = t^{-\frac{n}{q}} \big(\beta_{n} Y+\mathcal O(R^2)\big)+\mathcal O\big(R t^{-\frac{n+1}{q}}\big)
\end{eqnarray*}
as $R=\sqrt{X^2+Y^2}\to0$ and $t\to\infty$, where $\omega_3^2=\sqrt{\Delta_n}>0$.
Consider
\begin{gather*}
     U_3(X,Y,t)=t^{\frac{n}{2q}}\hat{\mathcal H}(X,Y,t)- t^{-\frac{n}{2q}}\frac{\beta_{n}}{2}XY
\end{gather*}
as a Lyapunov function candidate. It can easily be checked that
for all $0<\varkappa <1$ there exist $R_1>0$ and $t_{1}>0$ such that
\begin{gather}
\label{W3ineq}
 (1-\varkappa ) W_3(X,Y)  \leq U_3(X,Y,t) \leq  (1+\varkappa ) W_3(X,Y)
\end{gather}
for all $(X,Y,t)\in D_{W_3}(R_1,t_1):=  \{(X,Y,t)\in\mathbb R^3: W_3(X,Y)\leq R_1^2, t\geq t_1\}$, $W_3(X,Y)=(\omega_3^2 X^2+Y^2)/2$. This implies that $U_3(X,Y,t)$ is positive definite. The derivative of $U_3(X,Y,t)$ with respect to $t$ along the trajectories of the system is given by
\begin{gather*}
\frac{dU_3}{dt}\Big|_\eqref{XYH} = \beta_{n} t^{-\frac{n}{q}} W_3(X,Y) \Big(1+\mathcal O(R)+\mathcal O(t^{-\frac 1q})+\mathcal O(t^{-\frac{n}{2q}})\Big), \quad R\to0, \quad t\to\infty.
\end{gather*}
Hence, there exist $0<R_0\leq R_1$ and $t_0\geq t_1$ such that
\begin{eqnarray}
\label{U3ineq}
          \frac{dU_3}{dt}\Big|_\eqref{XYH} \leq -   |\beta_{n}| \Big(\frac{1-\varkappa }{1+\varkappa }\Big) t^{-\frac{n}{q}} U_3\leq 0\quad \text{if}\quad  \beta_n<0,\\
\label{U3est}
          \frac{dU_3}{dt}\Big|_\eqref{XYH} \geq \beta_{n}\Big(\frac{1-\varkappa }{1+\varkappa }\Big) t^{-\frac{n}{q}} U_3\geq 0\quad \text{if}\quad  \beta_n>0
 \end{eqnarray}
 for all $(X,Y,t)\in D_{W_3}(R_0,t_0)$.
Arguing as in the proof of Theorem~\ref{st1}, we see that from \eqref{W3ineq} and \eqref{U3ineq} it follows that the fixed point $(0,0)$ of system \eqref{XYH} and the solution $x_\ast(t)$, $y_\ast(t)$ of system \eqref{FulSys} are stable when $\beta_n<0$.

 Let $\beta_n>3\delta_{n,q}$ and $X(t)$, $Y(t)$ be a solution to system \eqref{XYH} with initial data $X(t_0)$, $Y(t_0)$ such that $W_3(X(t_0),Y(t_0))=\delta_0^2<R_0^2$. Integrating \eqref{U3est} with respect to $t$ and taking into account \eqref{XY3}, \eqref{W3ineq}, we obtain the following estimates:
\begin{gather*}
    \begin{split}
        &  \omega_3^2\xi^2(t)  +\eta^2(t)\geq 2\delta_0^2 \Big(\frac{1-\varkappa }{1+\varkappa }\Big)t^{-\frac{3n}{q}} \exp\left(\frac{q\beta_{n} }{q-n} \Big(\frac{1-\varkappa }{1+\varkappa }\Big)\Big (t^{1-\frac{n}{q}}-t_0^{1-\frac{n}{q}}\Big)\right), \quad  n<q,\\
        &   \omega_3^2\xi^2(t)  +\eta^2(t) \geq 2\delta_0^2 \Big(\frac{1-\varkappa }{1+\varkappa }\Big) t^{-\frac{3n}{q}} \Big(\frac{t}{t_0}\Big)^{\beta_{n}\big(\frac{1-\varkappa }{1+\varkappa }\big) }, \quad n=q
    \end{split}
\end{gather*}
as $t>t_0$. Therefore, choosing $\varkappa\in(0,1)$ small enough ensures the instability of the fixed point $(0,0)$ in system \eqref{XEH} and the solution $x_\ast(t)$, $y_\ast(t)$ in system \eqref{FulSys}: for all $\delta_0>0$ there exists $T_0>t_0$  such that $\xi^2(t)+\eta^2(t)\geq \rho_\ast^2$ as $ t > T_0$ with some $\rho_\ast>0$.
\end{proof}

\begin{Cor}
If $\beta_{n}<0$, the solution $x_\ast(t)$, $y_\ast(t)$  with asymptotics \eqref{AS2}, $x_{n}=\nu_+$ is asymptotically stable.
\end{Cor}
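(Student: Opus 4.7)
The plan is to upgrade the stability conclusion of Theorem \ref{st2} to asymptotic stability by integrating the differential inequality for the Lyapunov function $U_3$ constructed there, in the spirit of Corollary \ref{Cor1}. Since Theorem \ref{st2} already guarantees that any trajectory starting sufficiently close to $(0,0)$ remains in the domain $D_{W_3}(R_0,t_0)$ where the estimates \eqref{W3ineq} and \eqref{U3ineq} are valid, no additional containment argument is needed.

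Concretely, I would fix a solution $X(t)$, $Y(t)$ of system \eqref{XYH} with initial data satisfying $W_3(X(t_0),Y(t_0))<R_0^2$, and invoke the first inequality in \eqref{U3ineq} to obtain
\begin{gather*}
\frac{dU_3}{dt}\Big|_\eqref{XYH}\leq -|\beta_n|\Big(\frac{1-\varkappa}{1+\varkappa}\Big)t^{-\frac{n}{q}}U_3
\end{gather*}
on $[t_0,\infty)$. Integrating this Gronwall-type inequality in the two relevant regimes yields, for $n<q$, an estimate of the form $U_3(X(t),Y(t),t)\leq C_0\exp\!\big(-\tfrac{q|\beta_n|}{q-n}\tfrac{1-\varkappa}{1+\varkappa}\,t^{1-n/q}\big)$, and for $n=q$, a power-type bound $U_3(X(t),Y(t),t)\leq C_0\,(t/t_0)^{-|\beta_q|(1-\varkappa)/(1+\varkappa)}$, with a constant $C_0>0$ depending on the initial data and $t_0$. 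In both cases $U_3(X(t),Y(t),t)\to 0$ as $t\to\infty$, and the lower bound in \eqref{W3ineq} forces $W_3(X(t),Y(t))\to 0$, i.e.\ $X(t)\to 0$ and $Y(t)\to 0$.

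Finally, I would pass back to the original variables via \eqref{XY3}: since $\xi(t)=t^{-n/q}X(t)$ and $\eta(t)=t^{-3n/(2q)}Y(t)$, both $\xi(t)$ and $\eta(t)$ tend to zero at infinity. Combined with the stability provided by Theorem \ref{st2}, this yields the asymptotic stability of the particular solution $x_\ast(t)$, $y_\ast(t)$ with asymptotics \eqref{AS2}, $x_n=\nu_+$. There is essentially no obstacle here — the entire argument is mechanical once the Lyapunov construction of Theorem \ref{st2} is in hand, the only bookkeeping being the separation of the cases $n<q$ and $n=q$ when integrating $t^{-n/q}$.
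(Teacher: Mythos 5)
Your proof is correct, but it takes a more laborious route than the paper needs. The paper's own justification (given explicitly for the analogous corollary in Case II, and implicitly here) is a one-liner: Theorem~\ref{st2} already gives stability of the fixed point $(0,0)$ of system \eqref{XYH}, i.e.\ boundedness of $(X(t),Y(t))$ in a small ball for all $t\geq t_0$, and then the change of variables \eqref{XY3} alone forces $\xi(t)=t^{-n/q}X(t)\to 0$ and $\eta(t)=t^{-3n/(2q)}Y(t)\to 0$ because of the decaying prefactors --- no integration of the differential inequality is required. You instead integrate \eqref{U3ineq} \`a la Corollary~\ref{Cor1} to conclude $U_3\to 0$ and hence $X(t),Y(t)\to 0$ themselves; your case split $n<q$ versus $n=q$ is exhaustive (since \eqref{as0} imposes $n\leq q$), the Gronwall bounds are right, and your observation that the stability part of Theorem~\ref{st2} keeps the trajectory inside $D_{W_3}(R_0,t_0)$ so that \eqref{U3ineq} applies for all time is exactly the containment one needs. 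What your approach buys is strictly more information --- explicit decay rates for $U_3$ and hence for $X,Y$ in the rescaled variables, which the paper's argument does not provide; what the paper's approach buys is brevity, since mere boundedness of $(X,Y)$ suffices once the weights in \eqref{XY3} are in hand.
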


As in the previous case, \eqref{U3est} implies that if $n=q$ and $\beta_{n}>0$, the particular solution $x_\ast(t)$, $y_\ast(t)$ with asymptotics \eqref{AS2}, $x_{n+m}=\nu$ is weakly unstable with the weights $t^{3n/q}$ and $t^{3n/2q}$. Moreover, we have the following:

\begin{Th}\label{mst2}
Let $\lambda=0$, $n=q$  and assumptions \eqref{as0}, \eqref{as1} hold. If $\beta_{n}<1/2$, then the solution $x_\ast(t)$, $y_\ast(t)$  with asymptotics \eqref{AS2}, $x_{n}=\nu_+$ is stable on a finite but asymptotically long time interval.
\end{Th}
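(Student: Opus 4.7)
Following the approach of the proof of Theorem~\ref{mst1}, I would replace the rescaling \eqref{XY3} used in the proof of Theorem~\ref{st2} by the weaker substitution
\begin{gather*}
\xi(t)=t^{-1}X(t),\qquad \eta(t)=t^{-1}Y(t),
\end{gather*}
in which the exponent on $\eta$ is lowered from $3/2$ to $1$. Under this substitution system \eqref{XEH} takes the Hamiltonian form \eqref{XYH} with
\begin{gather*}
\hat{\mathcal H}(X,Y,t)=t^{2}\mathcal H\big(t^{-1}X,t^{-1}Y,t\big)+t^{-1}XY,\qquad \hat{\mathcal P}(X,Y,t)=t\,\mathcal P\big(t^{-1}X,t^{-1}Y,t\big)+2t^{-1}Y.
\end{gather*}
The constant $a+b=2$ now appearing in $\hat{\mathcal P}$ replaces the value $5/2$ that produced $\beta_n=\gamma_n(0)+5/2$ in Theorem~\ref{st2}, so the effective damping coefficient becomes $\hat\beta_n:=\gamma_n(0)+2=\beta_n-1/2$, which is strictly negative by hypothesis.

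Using the expansions of $\mathcal H$ and $\mathcal P$ recorded in the proof of Theorem~\ref{st2}, I would derive
\begin{gather*}
\hat{\mathcal H}(X,Y,t)=\frac{Y^2}{2}+t^{-1}\Big(\frac{\sqrt{\Delta_n}}{2}X^2+\frac{w(0)}{3}X^3+\frac{\partial_y F_n(0,0)}{2}Y^2+(1+\partial_xF_n(0,0))XY\Big)+\mathcal O\big(R^2 t^{-1-\frac{1}{q}}\big),\\
\hat{\mathcal P}(X,Y,t)=\hat\beta_n\,t^{-1}Y+\mathcal O\big(R^2 t^{-1}\big)+\mathcal O\big(R\,t^{-1-\frac{1}{q}}\big),
\end{gather*}
as $R=\sqrt{X^2+Y^2}\to 0$ and $t\to\infty$. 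In direct parallel with the proof of Theorem~\ref{mst1} I would use
\begin{gather*}
\hat U_3(X,Y,t):=\hat{\mathcal H}(X,Y,t)-\frac{\hat\beta_n}{2}t^{-1}XY
\end{gather*}
as a Lyapunov function candidate, compared with the time-dependent weight $\hat W_3(X,Y,t)=\big(Y^2+t^{-1}\sqrt{\Delta_n}\,X^2\big)/2$. A routine verification should establish $(1-\varkappa)\hat W_3\le\hat U_3\le(1+\varkappa)\hat W_3$ on some set $D_{W_3}(R_1,t_1)$, and, more importantly,
\begin{gather*}
\frac{d\hat U_3}{dt}\bigg|_{\eqref{XYH}}=\hat\beta_n\,t^{-1}\hat W_3(X,Y,t)+\mathcal O\big(R^2 t^{-1-\frac{1}{q}}\big)+\mathcal O\big(R^3 t^{-1}\big)\le -|\hat\beta_n|\Big(\frac{1-\varkappa}{1+\varkappa}\Big) t^{-1}\hat U_3\le 0
\end{gather*}
on a smaller domain $D_{W_3}(R_0,t_0)$; the cross-term $-(\hat\beta_n/2)t^{-1}XY$ is chosen precisely so that its time derivative combines with $\partial_Y\hat{\mathcal H}\cdot\hat{\mathcal P}\approx\hat\beta_n t^{-1}Y^2$ to produce a sign-definite estimate in both $X^2$ and $Y^2$.

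Integrating this inequality controls $\hat W_3(X(t),Y(t),t)$ uniformly in $t\ge t_0$, and undoing the rescaling via $\xi=t^{-1}X$, $\eta=t^{-1}Y$ gives $|\eta(t)|=\mathcal O(\delta/t)$ and $|\xi(t)|=\mathcal O(\delta/\sqrt{t})$ as long as the trajectory of \eqref{XYH} remains in the validity ball $\{W_3(X,Y)\le R_0^2\}$. The hard part will be precisely this stay-in-ball condition: since the coefficient of $X^2$ in $\hat W_3$ decays like $t^{-1}$, the bound $\hat W_3\le\delta^2$ only forces $|X(t)|\le\delta\sqrt{2t/\sqrt{\Delta_n}}$, which exits the fixed ball $\{W_3\le R_0^2\}$ at time of order $t_0(R_0/\delta)^2$. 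This caps the metastability interval by $t_0\le t\le K_\varepsilon t_0$ with $K_\varepsilon\sim (R_0/\delta_\varepsilon)^2/t_0$; taking $\delta_\varepsilon\sim\varepsilon$ then yields both $K_\varepsilon\to\infty$ as $\varepsilon\to 0$ and $|x(t)-x_\ast(t)|+|y(t)-y_\ast(t)|<\varepsilon$ on this interval, which is exactly the desired stability on a finite but asymptotically long time interval.
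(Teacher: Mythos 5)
Your proposal is correct and follows essentially the same route as the paper: the same substitution $\xi=t^{-1}X$, $\eta=t^{-1}Y$, the same shifted coefficient $\hat\beta_n=\beta_n-1/2<0$, the same Lyapunov function $\hat U_3=\hat{\mathcal H}-\tfrac{\hat\beta_n}{2}t^{-1}XY$ compared against the time-dependent weight $\hat W_3=(Y^2+t^{-1}\omega_3^2X^2)/2$, and the same observation that the $t^{-1}$ weight on $X^2$ is what caps the admissible interval at $K_\varepsilon\sim(R_0/\varepsilon)^2$. The paper's proof of Theorem~\ref{mst2} is exactly this argument, with $K_\varepsilon=R_0^2\varepsilon^{-2}$.
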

\begin{proof}
Consider system \eqref{XEH}, where $x_\ast(t)$, $y_\ast(t)$ is the solution of system \eqref{FulSys} with asymptotics \eqref{AS2}, $x_n=\nu_+$.
It can easily be checked that the change of variables
\begin{gather}\label{XY33}
    \xi(t)=t^{-1}X(t), \quad \eta(t)=t^{-1}Y(t)
\end{gather}
transforms system \eqref{XEH} into \eqref{XYH} with
\begin{eqnarray*}
     \hat{ \mathcal H}(X,Y,t) & = &t^2\mathcal H\Big(t^{-1}X,t^{-1}Y,t\Big)+t^{-1}XY\\
        & =&\frac{Y^2}{2}\big(1+t^{-1}\partial_y F_n(0,0)\big)+ t^{-1}\Big( \omega_3^2\frac{X^2}{2}+w(0) \frac{X^3}{3}+ XY\Big)+\mathcal O (R^2 t^{-1-\frac{1}{q}}), \\
    \hat{ \mathcal P}(X,Y,t) & = &t \mathcal P\Big(t^{-1}X,t^{-1}Y,t\Big)+2 t^{-1} Y
            =  \hat\beta_{n} t^{-1}  Y +\mathcal O\big(R t^{-1-\frac{1}{q}}\big)
 \end{eqnarray*}
as $R=\sqrt{X^2+Y^2}\to0$ and $t\to\infty$, where $\hat\beta_{n}=\beta_n-1/2<0$ and $\omega_3^2=\sqrt{\Delta_n}>0$. As above, we use $\hat{\mathcal H}(X,Y,t)$ as the basis for the Lyapunov function:
\begin{gather*}
\hat U_3(X,Y,t):=  \hat{\mathcal H}(X,Y,t)-t^{-1}\frac{\hat\beta_{n }}{2}XY.
\end{gather*}
It follows easily that $0<\varkappa <1$ there exist $R_1>0$ and $t_1\geq 1$ such that
\begin{gather*}
 (1-\varkappa ) \hat W_3(X,Y,t)  \leq \hat U_3(X,Y,t)\leq (1+\varkappa ) \hat W_3(X,Y,t)
\end{gather*}
for all $(X,Y,t)\in D_{W_3}(R_1,t_1)$, where
\begin{gather*}
    \hat W_3(X,Y,t)=\frac12 \Big( Y^2+ t^{-1}\omega_3^2X^2\Big), \quad W_3(X,Y)\equiv \hat W_3(X,Y,1).
\end{gather*}
Calculating the derivative of the function $\hat U_3(X,Y,t)$ with respect to $t$ along the trajectories of the system, we obtain
\begin{gather*}
\frac{d\hat U_3}{dt}\Big|_\eqref{XYH} =  \frac{\hat\beta_n}{2} t^{-1}   \Big(  Y^2+t^{-1}\big(\omega_2^2X^2+\mathcal O(R^3)\Big)+\mathcal O(t^{-1-\frac{1}{q}} R^2).
\end{gather*}
Hence, for all $0<\varkappa <1$ there exist $0<R_0\leq R_1$ and $t_0\geq t_1$ such that
\begin{gather*}
\frac{d\hat U_3}{dt}\Big|_\eqref{XYH} \leq  -   |\hat\beta_{n}| \Big(\frac{1-\varkappa }{1+\varkappa }\Big) t^{-1} \hat U_3\leq 0
\end{gather*}
for all $(X,Y,t)\in D_{W_3}(R_0,t_0)$.

Arguing as in the previous case, we see that for all $0<\varepsilon<R_0$ there exist $\delta_\varepsilon>0$ such that   any solution of system \eqref{XYH} starting in $\{(X,Y): W_3(X,Y)\leq \delta_\varepsilon\}$ at $t=t_0$ satisfies the inequality:   $\hat W_3(X(t),Y(t),t/t_0)< \varepsilon^2$ as $t_0\leq t\leq t_0 K_\varepsilon$, where $K_\varepsilon=R_0^2 \varepsilon ^{-2}$. Taking into account \eqref{XY33}, we obtain  $W_3(\xi(t),\eta(t))<\varepsilon^2$  as $t_0\leq t\leq t_0 K_\varepsilon$. Thus, the fixed point $(0,0)$ of system \eqref{XEH} and the particular solution $x_\ast(t)$, $y_\ast(t)$ with asymptotics \eqref{AS2} are stable on the asymptotically long time interval.
\end{proof}

The results of this section are shown in Table~\ref{Table2}. Note that the case of stability on a finite but asymptotically long time interval corresponds to metastability of the solutions: the perturbed trajectories spend a long time in the vicinity of the particular solution and ultimately exit from its neighbourhood.

\begin{table}
\begin{tabular}{c|l|c|c|c}
\hline \rule{0cm}{0.5cm}
    {\bf Case}  &   {\bf Assumptions}   &   {\bf Asymptotic }  &   {\bf Stability} &   {\bf Ref. }\\
                        &                               &   {\bf behaviour}  &                          &    \\ [0.1cm]
\hline \rule{0cm}{0.5cm}
    $ \lambda>0$    &   \eqref{as0}  &    \eqref{AS}, $\sigma=-\sqrt\lambda$ &   unstable &    Th.~\ref{uns0} \\ [0.1cm]
\cline{2-5}\rule{0cm}{0.5cm}
                            &  \eqref{as0}, \eqref{as00}, $ \gamma_{n+h}(\lambda)>0$ & \eqref{AS}, $\sigma=\sqrt\lambda$ & unstable & Th.~\ref{st0} \\ [0.1cm]
\cline{2-2}\cline{4-4}\rule{0cm}{0.5cm}
                            &  \eqref{as0}, \eqref{as00}, $ \gamma_{n+h}(\lambda)<0$ &   & stable & \\ [0.1cm]
\cline{2-2}\cline{4-5}\rule{0cm}{0.5cm}
                            &  \eqref{as0}, \eqref{as00H}, $ d_{n}<0$ & & unstable & Th.~\ref{st0H} \\ [0.1cm]
\cline{2-2}\cline{4-4}\rule{0cm}{0.5cm}
                            &  \eqref{as0}, \eqref{as00H}, $ d_{n}>0$ &   & stable & \\ [0.1cm]
\hline\hline \rule{0cm}{0.5cm}
    $ \lambda=0$    &   \eqref{as0}, \eqref{as1}  &    \eqref{AS1}, $x_{n+m}=-\mu$ &   unstable &    Th.~\ref{uns1} \\ [0.1cm]
\cline{2-5}\rule{0cm}{0.5cm}
                            &  \eqref{as0}, \eqref{as1}, $  \gamma_n(0)>\delta_{n,q}\frac{(q+m)}{4q}$ & \eqref{AS1}, $x_{n+m}=\mu$ &unstable & Th.~\ref{st1} \\ [0.1cm]
\cline{2-2}\cline{4-4}\rule{0cm}{0.5cm}
                            &  \eqref{as0}, \eqref{as1}, $ \gamma_n(0)<-\delta_{n,q}\frac{5(q+m)}{4q}$ &   & stable & \\ [0.1cm]
\cline{2-2}\cline{4-5}\rule{0cm}{0.5cm}
                            &  \eqref{as0}, \eqref{as1}, $n=q$, $-\frac{5(q+m)}{4q}< \gamma_n(0)<-\frac{q+3m}{2q}$ &   & metastable & Th.~\ref{mst1} \\ [0.1cm]
\hline \hline \rule{0cm}{0.5cm}
    $ \lambda=0$    &   \eqref{as0}, \eqref{as2}  &     \eqref{AS2}, $x_{n}=\nu_-$ &   unstable &    Th.~\ref{uns2}   \\ [0.1cm]
\cline{2-5}\rule{0cm}{0.5cm}
                            &  \eqref{as0}, \eqref{as2}, $\gamma_n(0)> \delta_{n,q}\frac12$ &   \eqref{AS2},  $x_{n}=\nu_+$   & unstable & Th.~\ref{st2} \\ [0.1cm]
\cline{2-2}\cline{4-4}\rule{0cm}{0.5cm}
                            &  \eqref{as0}, \eqref{as2}, $\gamma_n(0)<-\delta_{n,q}\frac 52$ &     &  stable &   \\ [0.1cm]
\cline{2-2}\cline{4-5}\rule{0cm}{0.5cm}
                            &  \eqref{as0}, \eqref{as2}, $n=q$,  $-\frac 52<\gamma_n(0)<-2$ &      &  metastable & Th.~\ref{mst2} \\ [0.1cm]
\hline
\end{tabular}
\bigskip
\caption{{\small Stability of the particular solutions $x_\ast(t)$, $y_\ast(t)$ of system \eqref{FulSys}.}}
\label{Table2}
\end{table}

\section{Examples}
\label{sec4}
{\bf 1.} Consider again equation \eqref{Example} that corresponds to system \eqref{FulSys} with $w(x)\equiv 1$, $F(x,y,t)\equiv 0$, $G(x,y,t)\equiv t^{-\kappa} (By+C)$. It can easily be checked that this system satisfies \eqref{as0} and \eqref{as00} with $\kappa=n/q$, $h=0$, $\gamma_{n}(\lambda)=B$. Consequently, if $\lambda>0$, there are two particular solutions with asymptotics \eqref{AS}: $x_\pm(t)=\pm\sqrt\lambda + \mathcal O(t^{-\kappa})$, $y_\pm(t)=\mathcal O(t^{-1-\kappa})$ as $t\to \infty$. The solution $x_-(t)$, $y_-(t)$ is unstable (Theorem~\ref{uns0}); the solution $x_+(t)$, $y_+(t)$ is unstable if $B>0$ and stable if $B<0$  (Theorem~\ref{st0}). It follows from Corollary~\ref{Cor1} that $x_+(t)$, $y_+(t)$ is asymptotically stable if $B<0$ and $0<\kappa\leq 1$. If $B=0$, equation \eqref{Example} satisfies \eqref{as00H} with $d_n=C$. In this case, from Theorem~\ref{st0H} it follows that the solution $x_+(t)$, $y_+(t)$ is (neutrally) stable if $C>0$, and unstable if $C<0$ (see~Fig.~\ref{f1}, b).

Note that system \eqref{Example} also satisfies \eqref{as1} with $m=0$, $G_{n+m}(0,0)=C$ and $\alpha_{n,m}=B+\delta_{\kappa,1}(5/4)$. Therefore, if $\lambda=0$ and $C>0$, there are two particular solutions with asymptotics \eqref{AS1}: $\tilde x_\pm(t)=\pm \mu t^{-\kappa/2}(1+o(1))$, $\tilde y_\pm(t)=\mathcal O(t^{-1-\kappa/2})$ as $t\to \infty$ with $\mu=\sqrt{C}$. The solution $\tilde x_-(t)$, $\tilde y_-(t)$ is unstable (Theorem~\ref{uns0}); the solution $\tilde x_+(t)$, $\tilde y_+(t)$ is unstable if $B>\delta_{\kappa,1}(1/4)$ and stable if $B<-\delta_{\kappa,1}(5/4)$  (Theorem~\ref{st1}). From Theorem~\ref{mst1} it follows that if $\kappa=1$ $(n=q)$ and $-5/4<B<-1/2$, then the solution $\tilde x_+(t)$, $\tilde y_+(t)$ is metastable (see~Fig.~\ref{f2xy}).

\begin{figure}
\centering
\subfigure[$B=-1.5$]{\includegraphics[width=0.3\linewidth]{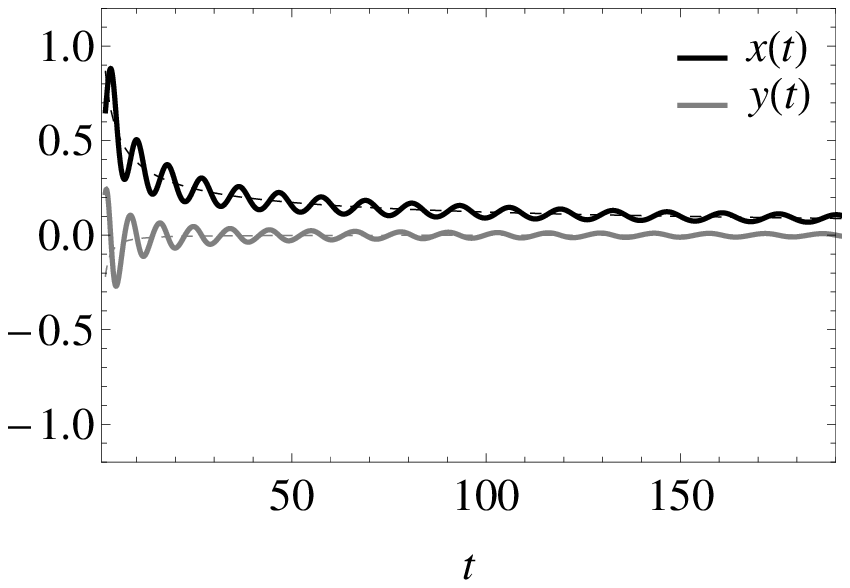}}
\subfigure[$B=-0.6$]{\includegraphics[width=0.3\linewidth]{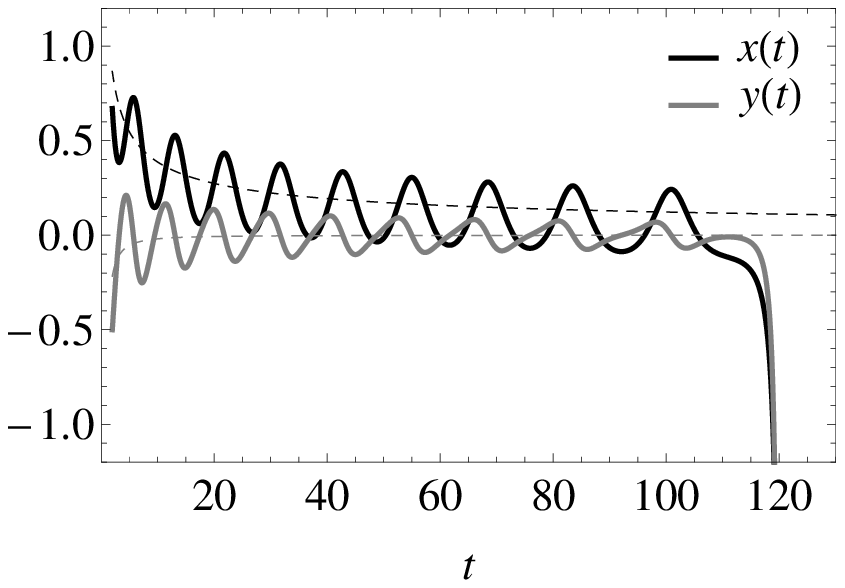}}
\subfigure[$B=0.5$]{\includegraphics[width=0.3\linewidth]{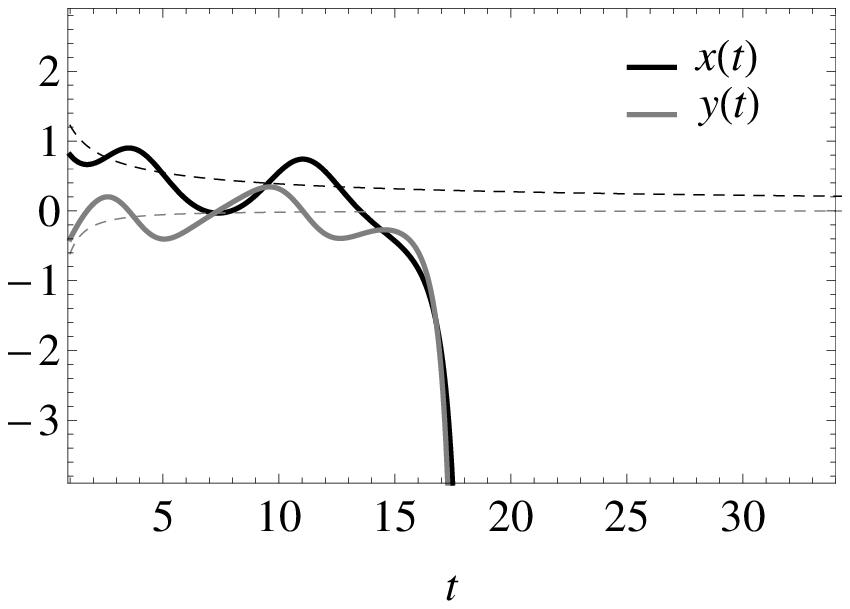}}
\caption{\small The evolution of $x(t)$ and $y(t)$ for solutions of equation \eqref{Example} with  $\lambda=0$, $C=1.5$, $\kappa=1$.} \label{f2xy}
\end{figure}

{\bf 2.} Consider
\begin{gather}\label{Example2}
\frac{dx}{dt}=y, \quad \frac{dy}{dt}=-(x^2-\lambda)(1+x)+t^{-1}\big(Ax+By+g(x,y)\big)+t^{-2} C,
\end{gather}
where $g(x,y)=x^2+y^2+xy$, $\lambda,A,B,C={\hbox{\rm const}}$, $C\neq 0$, $\lambda<1$. It can easily be checked that this system has the form \eqref{FulSys} with $w(x)=1+x$, $q=1$, $F(x,y,t)\equiv 0$, $G(x,y,t)\equiv t^{-1}G_1(x,y)+t^{-2} G_2(x,y)$, $G_1(x,y)\equiv Ax+By+g(x,y)$, $G_2(x,y)\equiv C$. This system satisfies \eqref{as0} and \eqref{as00} with $n=1$, $h=0$, $\gamma_{n}(\lambda)=B+\sqrt\lambda$. Hence, if $0<\lambda<1$, there are two particular solutions with asymptotics \eqref{AS}: $x_\pm(t)=\pm\sqrt\lambda + \mathcal O(t^{-1})$, $y_\pm(t)=\mathcal O(t^{-2})$ as $t\to \infty$. In this case, from Theorem~\ref{uns0} it follows that the solution $x_-(t)$, $y_-(t)$ is unstable. From Theorem~\ref{st0} and Corollary~\ref{Cor1} it follows that the solution $x_+(t)$, $y_+(t)$ is unstable if $B>-\sqrt\lambda$, and asymptotically stable if $B<-\sqrt\lambda$ (see~Fig.~\ref{f31xy}).

\begin{figure}
\centering
\subfigure[$B=-1$]{\includegraphics[width=0.3\linewidth]{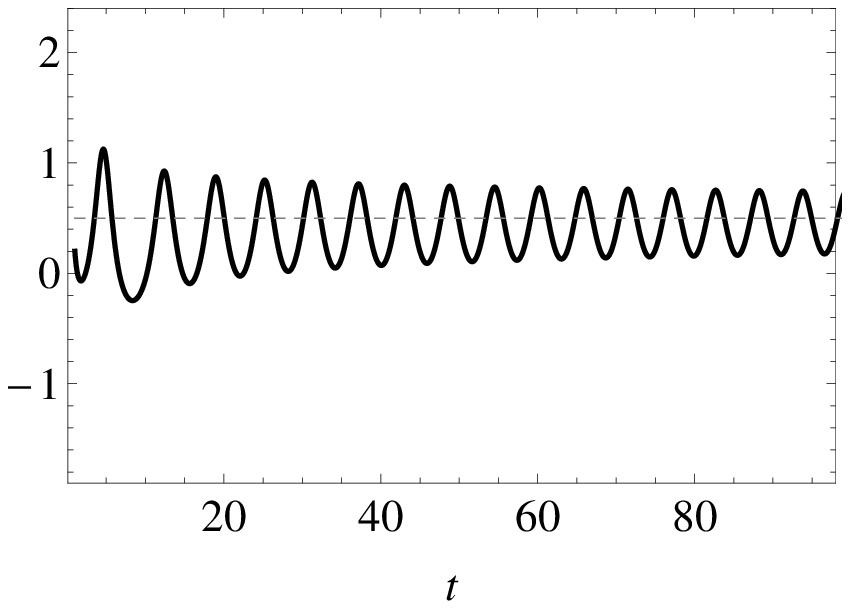}}
\subfigure[$B=0.5$]{\includegraphics[width=0.3\linewidth]{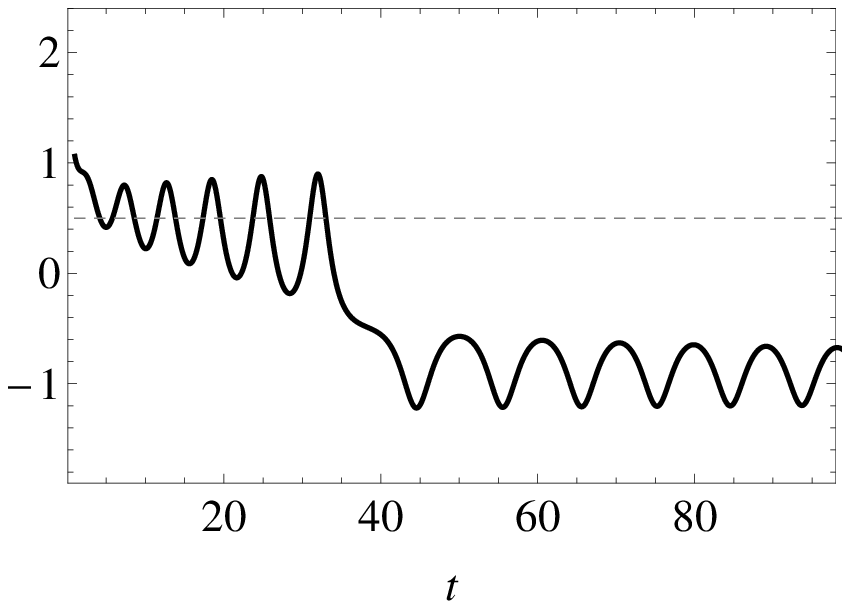}}
\caption{\small The evolution of $x(t)$ for solutions of equation \eqref{Example2} with  $\lambda=0.25$, $A=C=1$.} \label{f31xy}
\end{figure}

It is readily seen that system \eqref{Example2} satisfies additionally \eqref{as2} with $G_{2n}(0,0)=C$, $\Delta_n=A^2+4C$ and $\beta_{n}=B+5/2$.
Therefore, if $\lambda=0$ and $A^2+4C>0$, system \eqref{Example2} has two particular solutions with asymptotics \eqref{AS2}: $\tilde x_\pm(t)=\nu_\pm t^{-1}(1+o(1))$, $\tilde y_\pm(t)=\mathcal O(t^{-2})$ as $t\to \infty$ with $\nu_\pm=(A\pm\sqrt{A^2+4C})/2$. From Theorem~\ref{uns2} we know that the solution $\tilde x_-(t)$, $\tilde y_-(t)$ is unstable.  By applying Theorem~\ref{st2}, we see that the solution $\tilde x_+(t)$, $\tilde y_+(t)$ is unstable if $B>1/2$ and stable if $B<-5/2$ (see Fig.~\ref{f3} and~\ref{f3xy}, a, c). From Theorem~\ref{mst1} it follows that if $-5/2<B<-2$, the solution $\tilde x_+(t)$, $\tilde y_+(t)$ is metastable (see~Fig.~\ref{f3} and~\ref{f3xy}, b).

\begin{figure}
\centering
\subfigure[$B=-3$ ]{\includegraphics[width=0.3\linewidth]{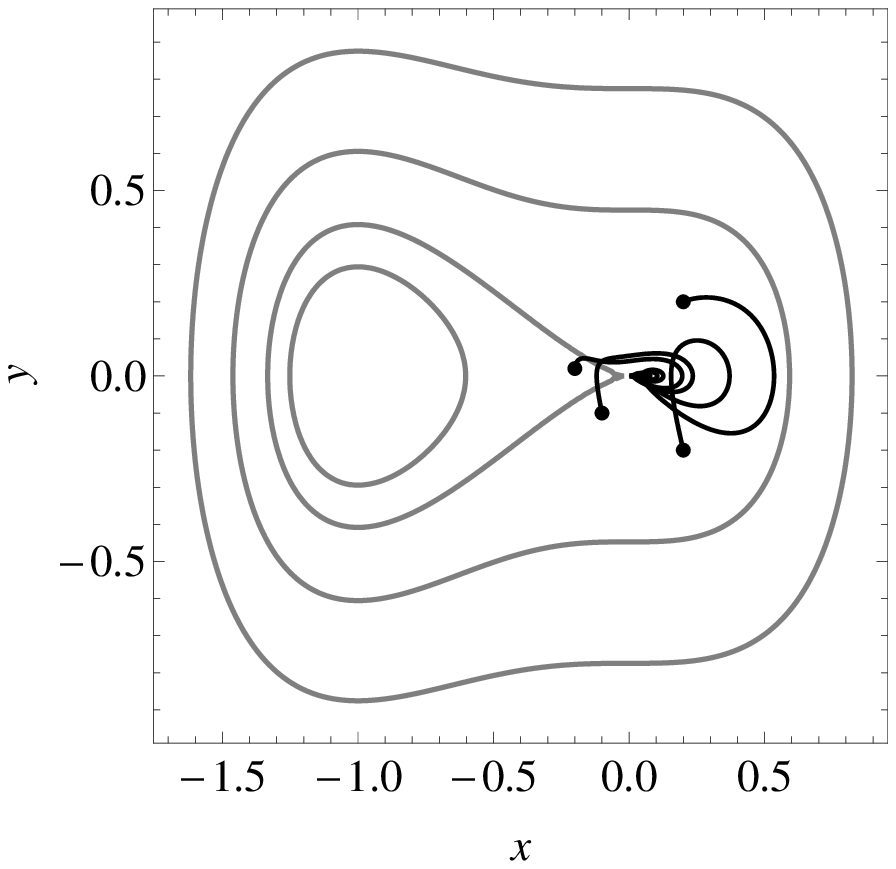}}
\subfigure[ $B=-2.25$]{\includegraphics[width=0.3\linewidth]{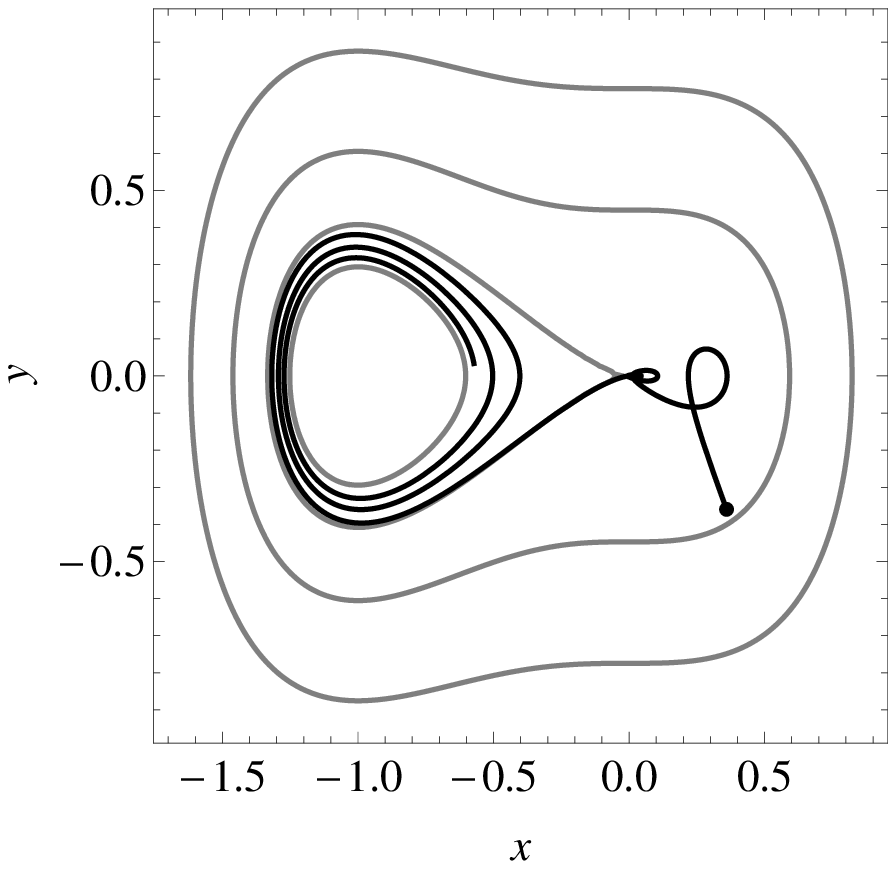}}
\subfigure[$B=1$]{\includegraphics[width=0.3\linewidth]{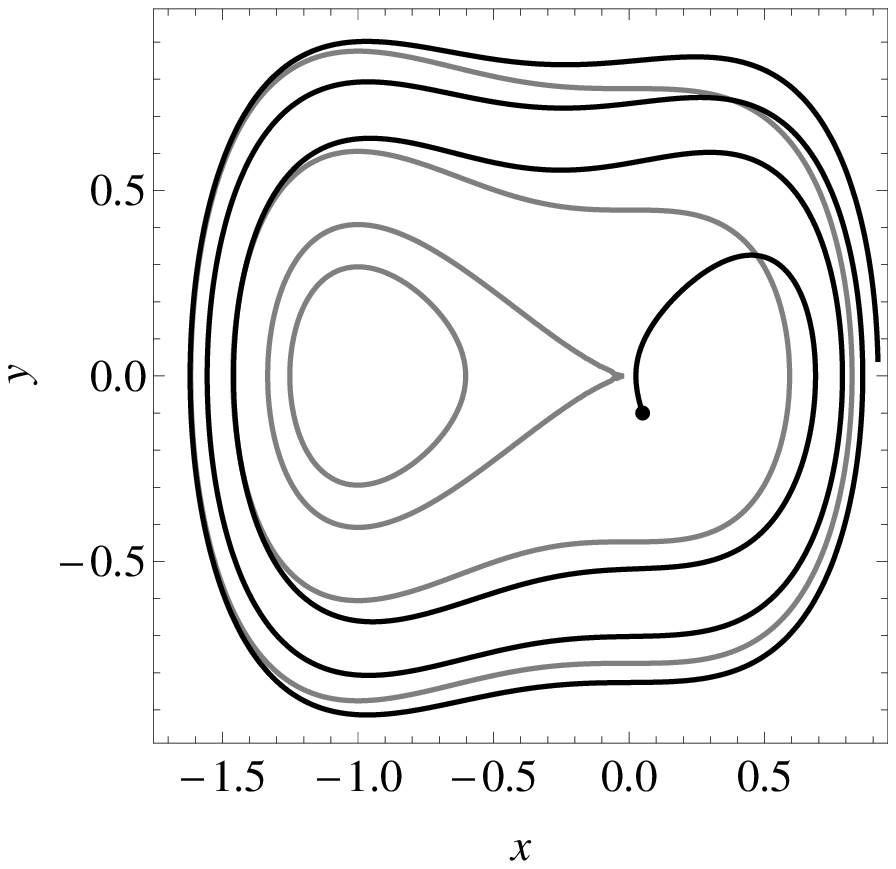}}
\caption{\small The evolution of $(x(t),y(t))$ for solutions of \eqref{Example2} with $\lambda=0$, $A=C=1$. The black points correspond to initial data $(x(2),y(2))$. The gray curves correspond to level lines of $H(x,y;0)$.} \label{f3}
\end{figure}

\begin{figure}
\centering
\subfigure[$B=-3$]{\includegraphics[width=0.3\linewidth]{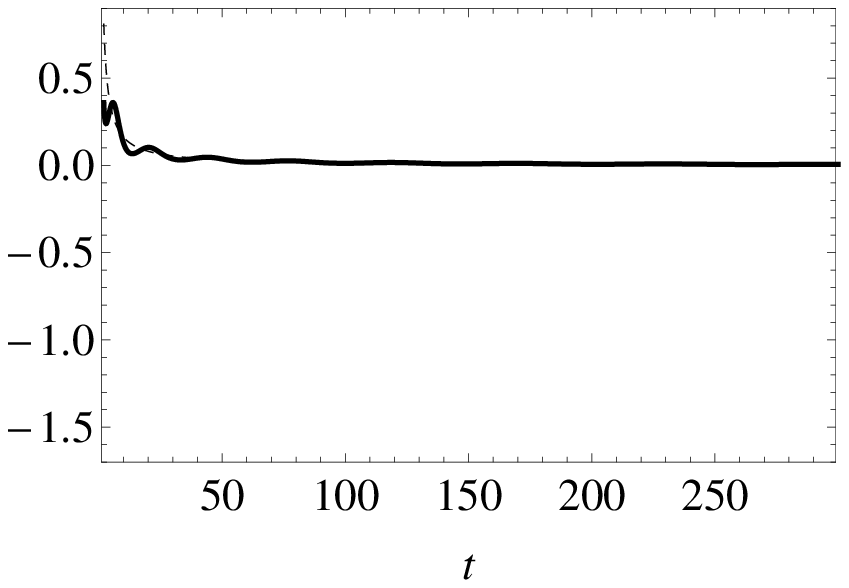}}
\subfigure[$B=-2.25$]{\includegraphics[width=0.3\linewidth]{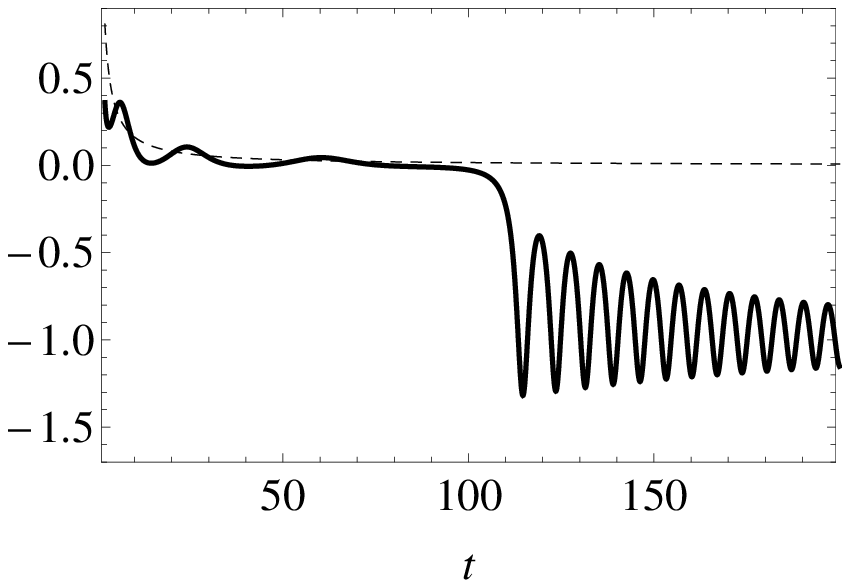}}
\subfigure[$B=1$]{\includegraphics[width=0.3\linewidth]{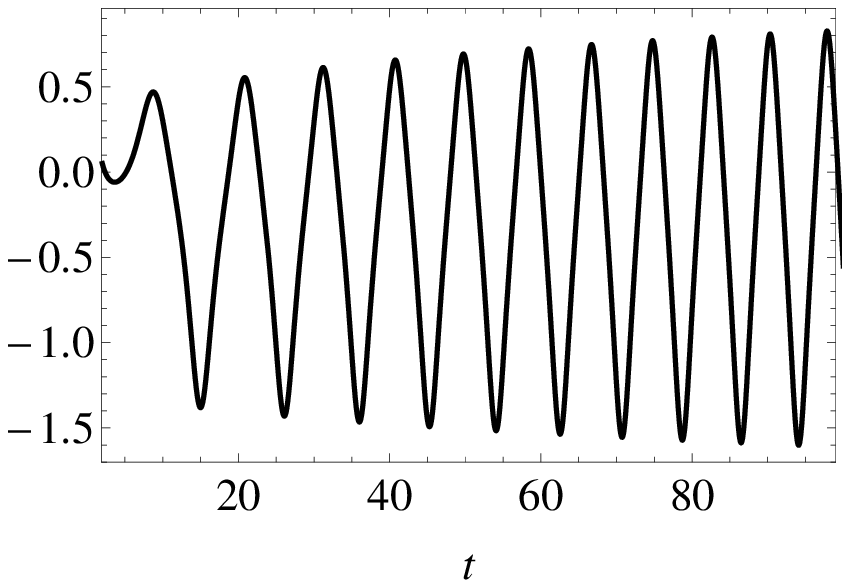}}
\caption{\small The evolution of $x(t)$ for solutions of equation \eqref{Example2} with  $\lambda=0$, $A=C=1$.} \label{f3xy}
\end{figure}

\section{Conclusion}
Thus, the influence of damped perturbations on autonomous systems with a centre-saddle bifurcation has been investigated.
We have shown that depending on the structure and the parameters of disturbances the qualitative behaviour of perturbed systems can be quite different from that of the corresponding limiting systems. Specifically, if $\lambda>0$, there are two particular solutions tending to fixed points of the limiting system.
The solution corresponding to the saddle is unstable regardless of decaying perturbations, while the other solution can be asymptotically stable, neutrally stable or unstable. In case of stability, there exists a family of solutions to the perturbed system with similar long-term behaviour.

When the parameter $\lambda$ passes through the bifurcation value, the centre and saddle coalesce and disappear in the limiting system. We have shown that the decaying perturbations can break such transition. In particular, if $\lambda=0$, the perturbed system can have a pair of different particular solutions tending to a degenerate fixed point of the limiting system. These solutions are a saddle and a centre in the asymptotic limit. The saddle-type solution is always unstable, while the second solution, depending on the perturbations, can be stable, metastable or unstable. In case of metastability,  the trajectories of the perturbed system remain in the vicinity of the particular solution for a sufficiently long time interval but eventually leave its neighbourhood. We have also described the conditions under which such solutions do not appear, and the centre-saddle bifurcation is preserved in the perturbed system.

\section*{Acknowledgments}
The research is supported by the Russian Science Foundation (Grant No. 20-11-19995).

}
\end{document}